\newtheorem{theorem}{Theorem}[section]
\newtheorem{proposition}[theorem]{Proposition}
\newtheorem{lemma}[theorem]{Lemma}
\newtheorem {corollary}[theorem]{Corollary}
\theoremstyle {definition}
\newtheorem {definition}[theorem]{Definition}
\newtheorem {example}[theorem]{Example}
\newtheorem {conjecture}[theorem]{Conjecture}
\theoremstyle {remark}
\def\Spec{\operatorname{Spec}}
\def\Proj{\operatorname{Proj}}
\def\Ann{\operatorname{Ann}}
\def\Ass{\operatorname{Ass}}
\def\Supp{\operatorname{Supp}}
\def\depth{\operatorname{depth}}
\def\im{\operatorname{Im}}
\def\ker{\operatorname{Ker}}
\def\nCM{\operatorname{nCM}}
\def\CM{\operatorname{CM}}
\def\Sing{\operatorname{Sing}}
\def\Hom{\operatorname{Hom}}
\def\htt{\operatorname{height}}
\newcommand{\fm}{\ensuremath{\mathfrak m}}
\newcommand{\fn}{\ensuremath{\mathfrak n}}
\newcommand{\fa}{\ensuremath{\mathfrak a}}
\newcommand{\fb}{\ensuremath{\mathfrak b}}
\newcommand{\fp}{\ensuremath{\mathfrak p}}
\newcommand{\fq}{\ensuremath{\mathfrak q}}
\newcommand{\pr}{\ensuremath{\prime}}
\newcommand{\cO}{\ensuremath{\mathcal O}}
\newcommand{\cR}{\ensuremath{\mathcal R}}
\newcommand{\cQ}{\ensuremath{\mathcal Q}}
\begin{document}


\title[Local cohomology annihilators and Macaulayfication]{Local cohomology annihilators and Macaulayfication}

\author{\fontencoding{T5} Nguyen Tu Cuong} 
\address{\fontencoding{T5} Nguyen Tu Cuong, 
Institute of Mathematics, 18 Hoang Quoc Viet, 10307 Hanoi, Vietnam.}
\email{ntcuong@math.ac.vn}

\author{\fontencoding{T5} Doan Trung Cuong}
\address{\fontencoding{T5} Doan Trung Cuong. {\it Current address:} Vietnam Institute for Advanced Study in Mathematics, Ta Quang Buu Building, 01 Dai Co Viet, Hai Ba Trung, Hanoi, Vietnam. {\it Permanent address:} Institute of Mathematics, 18 Hoang Quoc Viet, 10307 Hanoi, Vietnam.}
\email{doantc@gmail.com}

\thanks{Nguyen Tu Cuong is supported by the NAFOSTED of Vietnam under grant number 101.01-2011.49.}
\thanks{Doan Trung Cuong is partially supported by the SFB/TR 45 "Periods, moduli spaces and arithmetic of algebraic varieties" and by the NAFOSTED of Vietnam under grant number 101.01-2012.05.}


\subjclass[2010]{Primary 13H10,  14M05; Secondary 13D45, 14B05}
\keywords{arithmetic Macaulayfication, Macaulayfication, local cohomology annihilator, p-standard system of parameters, quotient of Cohen-Macaulay ring}
\begin{abstract}
The aim of this paper is to study a deep connection between local cohomology annihilators and Macaulayfication and arithmetic Macaulayfication over a local ring. Local cohomology annihilators appear through the notion of p-standard system of parameters. For a local ring, we prove an equivalence of the existence of Macaulayfications; the existence of a p-standard system of parameters; being a quotient of a Cohen-Macaulay local ring; and the verification of Faltings' Annihilator theorem. For a finitely generated module which is unmixed and faithful, we prove an equivalence of the existence of an arithmetic Macaulayfication and the existence of a p-standard system of parameters; and both are proved to be equivalent to the existence of an arithmetic Macaulayfication on the ground ring. A  connection between Macaulayfication and universal catenaricity is also discussed.

\end{abstract}
\maketitle

\section{Introduction}
Let $X$ be a Noetherian scheme. A Macaulayfication of $X$ is a pair $(Y, \pi)$ consisting of a Cohen-Macaulay scheme $Y$ and of a birational proper morphism $\pi: Y\rightarrow X$. This analogous notion of desingularization is due to Faltings and he constructed a Macaulayfication for any quasi-projective scheme with $0$ or $1$-dimensional non-Cohen-Macaulay locus over a Noetherian ring admitting a dualizing complex \cite{gf2}. The key point in Faltings' construction of Macaulayfication is a profound connection between Macaulayfication of a local ring and the annihilators of local cohomology modules of the ring. Using this idea, Kawasaki \cite{tk1} constructed a Macaulayfication for any quasi-projective scheme over a Noetherian ring provided the ground ring admits a dualizing complex. Local cohomology annihilators appears in Kawasaki's construction through the notion of p-standard system of parameters which is defined first in
 \cite{ntc, acta}. The Macaulayfication constructed by Kawasaki in fact is a blowing up of the scheme with center formulated by a product of parts of a p-standard system of parameters. While desingularization of Hironaka \cite{hi} depends on characteristic of the ground field, the constructed Macaulayfication is significantly characteristic-independent.

Beside Faltings and Kawasaki's Macaulayfication, there are other construction in some special cases by Brodmann \cite{br2}, Goto \cite{go1}, Schenzel \cite{sch1}. There is also a formulation of Macaulayfication of modules and sheaves \cite{mo}, \cite{br3}.

Going further, Kawasaki \cite{tk2, tk3} characterized the existence of arithmetic Macaulayfication of a Noetherian ring. Let $R$ be a commutative Noetherian ring and $I$ be a proper ideal of positive height of $R$. The Rees algebra $\cR(I)=\bigoplus_{m=0}^\infty I^m$ gives rise to a blowing up $Y=\Proj \cR(I) \stackrel{\pi}{\rightarrow} \Spec(R)$. The morphism $\pi$ is a Macaulayfication if $Y$ is a Cohen-Macaulay scheme. If the Rees algebra $\cR(I)$ itself is Cohen-Macaulay then it is called an arithmetic Macaulayfication of the ring $R$. Obviously the proj of an arithmetic Macaulayfication is a Macaulayfication of the spectrum of the ring. Kawasaki showed that a Noetherian local ring has an arithmetic Macaulayfication if and only if it is unmixed and all its formal fibers are Cohen-Macaulay. It is also remarkable that the ideal defining the Cohen-Macaulay Rees algebra is also a product of parts of a p-standard system of parameters. 

Arithmetic Macaulayfication has been studied from other perspective by Kurano \cite{ku}, Aberbach \cite{ab}, Aberbach-Huneke-Smith \cite{ahk}, Cutkosky-Tai \cite{ct}, Tai-Trung \cite{tt}. Analogously, an arithmetic Macaulayfication of a finitely generated $R$-module $M$ is defined to be a Cohen-Macaulay Rees module $\cR(M, I):=\bigoplus_{m\geq 0}I^mM$ for some ideal $I$ of $R$.

The aim of this paper is to study more extensively the relationship between Macaulayfication and local cohomology annihilators. We address ourself to two problems on local rings: Existence of arithmetic Macaulayfication of modules; relationship between existence of p-standard system of parameters and of Macaulayfication of the spectrum of a local ring.

Let $(R, \fm)$ be a Noetherian local ring and $M$ be a finitely generated $R$-module. 
The $i$-th local cohomology module $H^i_\fm(M)$ has the annihilator ideal denoted by $\fa_i(M)$. Put $\fa(M):=\fa_0(M)\fa_1(M)\ldots \fa_{d-1}(M)$, where $d$ is the Krull dimension of $M$. A system of parameters $x_1, \ldots, x_d\in \fm$ of $M$ is called p-standard if for any $i=1, \ldots, d$, we have $x_i\in \fa(M/(x_{i+1}, \ldots, x_d)M)$. These systems of parameters have very rich properties. Beside applications in constructing Macaulayfication, they are very useful in the study of structure of local ring and modules, see Cuong-Schenzel-Trung \cite{cst}, Schenzel \cite{sch}, Trung \cite{tr}, Cuong-Cuong \cite{cc1, cc2}. p-Standard system of parameters will play an important role in our study of two problems above.

For the problem on arithmetic Macaulayfication of modules, we get the first main result. 

\begin{theorem} \label{main1a} Let $(R, \fm)$ be a Noetherian local ring and $M$ be a finitely generated $R$-module.  Suppose $M$ is unmixed. The following statements are equivalent:

\begin{enumerate}[(a)]
\item $M$ has an arithmetic Macaulayfication.
\item  $R/\fp$ has an arithmetic Macaulayfication for all associated prime ideals $\fp\in \Ass(M)$.
\item $M$  admits a p-standard system of parameters.
\end{enumerate}
\end{theorem}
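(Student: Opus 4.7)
The strategy is to establish the cycle (c) $\Rightarrow$ (a) $\Rightarrow$ (b) $\Rightarrow$ (c), using Kawasaki's ring-theoretic criterion as the pivot: a Noetherian local ring admits an arithmetic Macaulayfication iff it is unmixed with Cohen-Macaulay formal fibers.

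\medskip

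\emph{For (c) $\Rightarrow$ (a)}, I would adapt Kawasaki's construction from rings to modules. Given a p-standard s.o.p.\ $x_1, \ldots, x_d$ of $M$, form the ideal $I = (x_1^{n_1}, \ldots, x_t^{n_t})$ for carefully chosen exponents $n_i$ and an index $t<d$, and verify that the Rees module $\cR(M, I) = \bigoplus_{m \geq 0} I^m M$ is Cohen-Macaulay. This reduces to showing $H^i_\fn(\cR(M, I)) = 0$ for $i$ below the dimension, where $\fn$ is the unique maximal graded ideal, via a Sancho de Salas type long exact sequence that relates graded local cohomology of $\cR(M,I)$ to ordinary local cohomology of $M$ and of the associated graded module $\operatorname{gr}_I(M)$. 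The p-standard condition $x_i \in \fa(M/(x_{i+1}, \ldots, x_d)M)$ supplies precisely the annihilator vanishings needed to kill the higher cohomology groups term by term; the choice of exponents $n_i$ is dictated by these annihilator conditions.

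\medskip

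\emph{For (a) $\Rightarrow$ (b)}, I would read off from a Cohen-Macaulay Rees module $\cR(M, I)$ that $\widehat M$ is unmixed and that for each $\fp \in \Ass(M)$ the formal fibers of $R/\fp$ are Cohen-Macaulay. Unmixedness of $\widehat M$ follows from unmixedness of the Cohen-Macaulay Rees module together with the fact that its associated primes contract to $\Ass(M)$. For the formal-fiber statement, one specializes $\cR(M, I)$ along $\fp$ to produce a Cohen-Macaulay Rees algebra on $R/\fp$, and applies Kawasaki's ring criterion.

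\medskip

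\emph{For (b) $\Rightarrow$ (c)}, invoke the ring equivalence announced in the abstract: each $R/\fp$ with $\fp \in \Ass(M)$ admits a p-standard s.o.p. Then assemble a p-standard s.o.p.\ of $M$ by induction on $d = \dim M$ together with prime avoidance. At each step, use that $H^j_\fm(M)$ has support of dimension $<d$ for $j<d$, hence $\fa_j(M) \not\subseteq \fp$ for every $\fp \in \Ass(M)$; prime avoidance produces $x_d \in \fa(M)$ which is a parameter of $M$. Unmixedness of $M$, combined with the p-standard data from each $R/\fp$, keeps the inductive step running on the quotient $M/x_d M$, whose associated primes still lie over suitable primes of the $R/\fp$.

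\medskip

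The main obstacle will be the Rees-module computation in (c) $\Rightarrow$ (a): Kawasaki's original ring-theoretic argument must be ported to the module setting, and one must track with care how multiplication by products of the $x_i$'s acts on local cohomology of both $M$ and $\operatorname{gr}_I(M)$, ensuring the module-theoretic p-standard hypothesis is strong enough to substitute for Kawasaki's ring-level setup.
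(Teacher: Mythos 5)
Your strategy is in the right spirit (invoke Kawasaki's criterion, p-standard systems of parameters, prime avoidance), but there are genuine gaps, the most serious of which is in \((a)\Rightarrow(b)\). The step ``specialize \(\cR(M,I)\) along \(\fp\) to produce a Cohen-Macaulay Rees algebra on \(R/\fp\)'' does not go through as stated: a Cohen-Macaulay Rees module \(\cR(M,I)\) gives no direct way to manufacture a Cohen-Macaulay Rees algebra of the quotient ring \(R/\fp\), nor do you have an argument that the formal fibers of \(R/\fp\) are Cohen-Macaulay. The paper avoids this entirely by proving \((a)\Rightarrow(c)\) first: localize the Rees algebra \(\cR(R,I)\) and Rees module \(\cR(M,I)\) at the maximal homogeneous ideal \(\fm\oplus\cR(R,I)_+\), obtaining a local ring \(R'\) and a Cohen-Macaulay local \(R'\)-module \(M'\) with \(R\) a quotient of \(R'\) and \(M\) a quotient of \(M'\); since a Cohen-Macaulay module has a regular (hence p-standard) system of parameters and the existence of p-standard systems of parameters passes from \(M'\) to any module whose support is contained in \(\Supp(M')\) (Theorem~\ref{main2a}), \(M\) inherits a p-standard system of parameters. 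Then \((c)\Rightarrow(b)\) follows again from Theorem~\ref{main2a} applied to the \(R/\fp\).

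Two smaller points. For \((c)\Rightarrow(a)\), the correct blowing-up ideal is a product of nested parameter-subideals \(I_1\cdots I_{d-1}\) with \(I_i=(x_i,\dots,x_d)\), not a single ideal generated by powers of parameters; the paper carries out the computation via the multi-graded Rees module \(\cR(M,I_1,\dots,I_{d-1})\) and then reduces to a single Rees module by the Chan--Cumming--T\`ai theorem, rather than through a Sancho de Salas sequence. Also, unmixedness of \(M\) is essential here to ensure the \(x_i\) are regular on \(M\), which your sketch does not use. For \((b)\Rightarrow(c)\), prime avoidance plus an induction on dimension is not by itself sufficient: the step that lets you pass from p-standard systems on the \(R/\fp\) to one on \(M\) rests on Zhou's theorem on uniform local cohomology elements (Proposition~\ref{zh}), which you do not invoke. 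This is precisely what makes Corollary~\ref{corollarythem} (equivalently, Theorem~\ref{main2a}) true, and it is the nontrivial technical heart of the argument.
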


Here $M$ is unmixed if $\dim \hat R/P=\dim M$ for any associated prime ideal $P$ of the $\fm$-adic completion of $M$. 

As a direct consequence of Theorem \ref{main1a}, a finitely generated $R$-module $M$ has an arithmetic Macaulayfication if and only if so does the ring $R/\Ann_R(M)$. So the question on existence of arithmetic Macaulayfication of module reduces to the same question on the corresponding ring. In order to prove Theorem \ref{main1a}, we first prove the equivalence of (a) and (c). Then we make use of the following result which relates the existence of p-standard system of parameters on a module to that on the ring.

\begin{theorem} \label{main2a} Let $(R, \fm)$ be a Noetherian local ring and $M$ be a finitely generated $R$-module. The following statements are equivalent:

\begin{enumerate}[(a)]
\item $M$ admits a p-standard system of parameters.
\item $R/\Ann_RM$ admits a p-standard system of parameters. 
\item Any finitely generated $R$-module $N$ with $\Supp(N)\subseteq \Supp(M)$ admits a p-standard system of parameters.
\end{enumerate}
\end{theorem}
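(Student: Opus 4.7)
\emph{Proof plan.} The easy implications are $(c)\Rightarrow (a)$, which is the case $N=M$, and $(c)\Rightarrow (b)$, which follows from $\Supp(R/\Ann_RM)=V(\Ann_RM)=\Supp(M)$ (valid because $M$ is finitely generated). The content lies in $(a)\Rightarrow (b)$ and $(b)\Rightarrow (c)$.

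For $(a)\Rightarrow (b)$, let $x_1,\ldots,x_d$ be a p-standard s.o.p.\ of $M$. Because $\Ann_RM$ kills every $H^j_\fm(M)$, it is contained in each $\fa_j(M)$; in particular $\dim\bar R=\dim M=d$ for $\bar R:=R/\Ann_RM$, and the images $\bar x_i$ form a s.o.p.\ of $\bar R$. To verify the p-standard condition $\bar x_i\in \fa(\bar R/(\bar x_{i+1},\ldots,\bar x_d)\bar R)$, I would compare, for each $i$ and $j$, the annihilator ideals $\fa_j(M/(x_{i+1},\ldots,x_d)M)$ and $\fa_j(\bar R/(\bar x_{i+1},\ldots,\bar x_d)\bar R)$. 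Both modules live over $\bar R$, have the same Krull dimension, and are supported on the same closed subset $V(\Ann_RM+(x_{i+1},\ldots,x_d))$. Exploiting the standard fact that the p-standard property survives replacing each $x_i$ by a sufficiently large power, one expects to arrange enough slack in the $\fa_j$'s to make the required inclusion descend through the surjection $R\twoheadrightarrow\bar R$.

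For $(b)\Rightarrow (c)$, let $N$ be as in (c). The hypothesis $\Supp N\subseteq\Supp M$ gives $\sqrt{\Ann_RM}\subseteq\sqrt{\Ann_RN}$, so up to nilpotents $R/\Ann_RN$ is a further quotient of $\bar R$. My plan is to extract a p-standard s.o.p.\ of $R/\Ann_RN$ from that of $\bar R$---keeping $\dim N$ suitable parameters from among high powers of $\bar x_1,\ldots,\bar x_d$, adjusted via prime avoidance so they remain generic on $R/\Ann_RN$---and then to apply $(b)\Rightarrow (a)$, already proved, to $N$ itself. The main obstacle I anticipate is precisely this quotient-descent step: annihilators of local cohomology do not behave covariantly under surjections of rings, so one must verify the p-standard condition by hand, leaning on the fact that for a module of dimension $e$ only the last $e$ parameters of a p-standard sequence carry the p-standard information. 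Should one wish to bypass this difficulty, the paper's structural characterization---that admitting a p-standard s.o.p.\ is equivalent to $R/\Ann_RM$ being a quotient of a Cohen--Macaulay local ring---makes the theorem immediate: a quotient of a quotient of such a ring is again one, so conditions (a), (b), and (c) all collapse to the single statement ``$R/\Ann_RM$ is a quotient of a Cohen--Macaulay local ring''.
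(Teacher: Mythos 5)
Your treatment of the easy implications is fine, but neither of the two substantive implications is actually proved, and the suggested bypass is circular. For $(a)\Rightarrow(b)$ you propose comparing $\fa_j\bigl(M/(x_{i+1},\ldots,x_d)M\bigr)$ with $\fa_j\bigl(\bar R/(\bar x_{i+1},\ldots,\bar x_d)\bar R\bigr)$ and ``arranging enough slack'' by raising powers; you acknowledge yourself that this is only an expectation. There is in fact no straightforward containment between the local cohomology annihilators of these two modules, which is exactly why the paper does not argue this way. Similarly, for $(b)\Rightarrow(c)$ you correctly identify that local cohomology annihilators are not covariant under ring surjections as the obstacle, but then leave it unresolved.

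The proposed bypass via the characterization ``admits a p-standard s.o.p.\ iff $R/\Ann_RM$ is a quotient of a Cohen--Macaulay local ring'' cannot be used here: that equivalence is Theorem \ref{main11}, which is proved \emph{after} and \emph{from} the present theorem (both directions of Theorem \ref{main11} invoke Theorem \ref{main1}, the body version of the statement at hand). Using it would be circular. Moreover, even granted Theorem \ref{main11}, passing from ``$M$ has a p-standard s.o.p.'' to ``$\bar R$ is a quotient of a CM ring'' already presupposes $(a)\Rightarrow(b)$, which is what one is trying to prove. What the paper actually does is introduce an intermediate condition, namely that every quotient domain of $R/\Ann_RM$ has a uniform local cohomology annihilator (equivalently $\fa(R/\fp)\neq 0$ for every $\fp\in\Supp M$). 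The implication $(a)\Rightarrow$ (this intermediate condition) is established by first invoking Proposition \ref{pr5} to produce a p-standard s.o.p.\ of $M$ with its tail in a given $\fp\in\Supp M$, cutting down to $N=M/(x_{r+1},\ldots,x_d)M$ so that $\fp\in\Ass N$ with $\dim R/\fp=\dim N$, and then applying Proposition \ref{zh} (the Zhou-type transfer of ``$\dim R/\fa(-)$ drops'' from a module to the domains $R/\fp$, $\fp\in\Ass$) to deduce $\fa(R/\fp)\neq 0$. Conversely, the intermediate condition gives $(c)$ by again using Proposition \ref{zh} to find a parameter element $x_t\in\fa(N)$ for each $N$ with $\Supp N\subseteq\Supp M$, and inducting on dimension. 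Your sketch does not contain this mechanism (uniform annihilators on quotient domains plus Proposition \ref{zh}), which is the crux of the proof.
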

An interesting consequence of Theorems \ref{main1a} and \ref{main2a} is that if $M$ has an arithmetic Macaulayfication and $N$ is another finitely generated $R$-module such  that $N$ is unmixed and $\Supp_R(N)\subseteq \Supp_R(M)$, then $N$ has an arithmetic Macaulayfication.

In the second part of the paper, we study the existence of p-standard system of parameters on a local ring. It turns out that the existence of p-standard system of parameters relates closely to various subjects such as Macaulayfication, Faltings' Annihilator theorem and being quotient of Cohen-Macaulay rings. They are put all together in the following second main result of the paper.

\begin{theorem}\label{main3a} Let $(R, \fm)$ be a Noetherian local ring. The following statements are equivalent:
\begin{enumerate}[(a)]
\item $R$  admits a p-standard system of parameters.
\item $R$ is a quotient of a Cohen-Macaulay local ring.
\item  $R$ is universally catenary and for any quotient $S$ of $R$,  $\Spec(S)$ has a Macaulayfication.
\item All essentially of finite type $R$-algebras verify Faltings' Annihilator Theorem. That means, if we let $A$ be an  essentially of finite type $R$-algebra and $N$ be a finitely generated $A$-module, then for any pair of ideals $\fb\subseteq \fa$ of $A$, we have
$$\inf \{i: \fb\not\subseteq \sqrt{\Ann_A H^i_\fa(N)}\}=\inf\{\depth N_\fp+\htt(\fa+\fp)/\fp: \fp\notin V(\fb)\}.$$
\item Any of the above statements holds for all essentially of finite type $R$-algebras.
\end{enumerate}
\end{theorem}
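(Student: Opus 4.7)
\noindent\textbf{Proof plan for Theorem \ref{main3a}.}
The plan is to place (b) at the hub and establish (a)$\Leftrightarrow$(b), (b)$\Leftrightarrow$(c), (b)$\Leftrightarrow$(d), and (b)$\Leftrightarrow$(e) separately, with the equivalence (a)$\Leftrightarrow$(b) being the core technical step and the others reducing to known results once (b) is in hand.

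First I would settle (a)$\Leftrightarrow$(b). For (b)$\Rightarrow$(a), since any Cohen-Macaulay local ring $(S,\fn)$ of dimension $n$ satisfies $\fa(S)=S$ (because $H^j_\fn(S)=0$ for $j<n$), every system of parameters of $S$ is trivially p-standard; writing $R=S/I$ and choosing a system of parameters $y_1,\ldots,y_n$ of $S$ with $(y_1,\ldots,y_{n-d})\supseteq I$ so that the last $d$ members reduce to a system of parameters of $R$, the standard change-of-rings identities for local cohomology will transfer the p-standard property from $S$ to $R$. The converse (a)$\Rightarrow$(b) is the key step and will rely on Kawasaki's construction: given a p-standard system of parameters $x_1,\ldots,x_d$ of $R$, the Rees algebra of an ideal $\cI$ built from suitable powers of the $x_i$ becomes Cohen-Macaulay when localized at its natural maximal ideal, so $R$ sits as a quotient of this Cohen-Macaulay local ring via reduction modulo the irrelevant ideal.

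With (a)$\Leftrightarrow$(b) in hand, the remaining equivalences are more routine. For (b)$\Rightarrow$(c), any quotient $S$ of $R$ is also a quotient of the ambient Cohen-Macaulay local ring, hence carries a p-standard system of parameters, to which the Faltings--Kawasaki blow-up construction associates a Macaulayfication of $\Spec(S)$; universal catenaricity is automatic for quotients of Cohen-Macaulay rings. For (c)$\Rightarrow$(b) I would invoke Kawasaki's characterization that a Noetherian local ring is a quotient of a Cohen-Macaulay local ring iff it is universally catenary with Cohen-Macaulay formal fibers, and deduce Cohen-Macaulayness of formal fibers by analyzing the behavior of the Macaulayfications of the $\Spec(R/\fp)$ after faithfully flat base change to the completion. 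For (b)$\Leftrightarrow$(d), quotients of Cohen-Macaulay local rings admit dualizing complexes on their completions, a property preserved under essentially of finite type extensions, and in this setting Faltings' original proof of the Annihilator Theorem applies; conversely, applying the annihilator identity with $A=R$, $\fa=\fm$, and varying $\fb$ will force the annihilators $\fa_i(R)$ to be large enough to build a p-standard system of parameters by descending induction on $i$. Finally (e) follows from (b) since essentially of finite type algebras over quotients of Cohen-Macaulay local rings are themselves quotients of Cohen-Macaulay local rings, polynomial extensions and localization both preserving this property.

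The main obstacle will be (c)$\Rightarrow$(b): extracting Cohen-Macaulayness of formal fibers from the mere existence of Macaulayfications of all quotient schemes is delicate, since one must control how the Macaulayfying birational proper morphism interacts with completion. This will likely proceed by tracing local cohomology annihilators through the blow-up and using the universal catenaricity hypothesis in an essential way to bridge the global geometric picture on $Y\to\Spec(R/\fp)$ with the local-algebraic information encoded in $\hat R \otimes_R R_\fp/\fp R_\fp$.
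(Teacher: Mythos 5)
The main gap is in your plan for (a)$\Rightarrow$(b). You propose to take a p-standard system of parameters $x_1,\ldots,x_d$ of $R$, build an ideal $\cI$ from suitable powers, and then claim the Rees algebra $\cR(R,\cI)$ localized at its homogeneous maximal ideal is Cohen-Macaulay, with $R$ recovered as the quotient by the irrelevant ideal. This is exactly the arithmetic-Macaulayfication construction, and it fails in general: by the paper's own Theorem \ref{macmodule}, a Cohen-Macaulay Rees algebra $\cR(R,\cI)$ exists if and only if $R$ is \emph{unmixed} and has a p-standard system of parameters. Unmixedness is not part of hypothesis (a). For instance $R=k[[x,y]]/(x^2,xy)$ is one-dimensional, hence generalized Cohen-Macaulay and has a p-standard system of parameters, but $\hat R$ has the embedded prime $(x,y)$, so $R$ is not unmixed and admits no arithmetic Macaulayfication; nevertheless $R$ \emph{is} a quotient of the Cohen-Macaulay local ring $k[[x,y]]$, so the implication you want to prove is true but your construction does not reach it. The paper's route is structurally different: it invokes Kawasaki's characterization (Theorem \ref{kawasakiCM}) that a local ring is a quotient of a Cohen-Macaulay local ring if and only if it is universally catenary with Cohen-Macaulay formal fibers, and then \emph{verifies these two properties directly} from the existence of a p-standard system of parameters --- universal catenaricity via the nonvanishing of $\fa(R/\fp)$ (Theorem \ref{main1}) together with Grothendieck's criterion, and Cohen-Macaulay formal fibers via Corollary \ref{co16} applied to $\hat R$. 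You would need to replace your Rees-algebra argument by this (or an equivalent) verification.

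The remaining implications in your plan are broadly aligned with the paper, with a few smaller points worth flagging. In (b)$\Rightarrow$(c) the paper first decomposes an arbitrary quotient $S$ of $R$ into its equidimensional pieces $S_j$ and Macaulayfies these separately (Kawasaki's Corollary 4.2 applies to equidimensional rings with a p-standard system of parameters); your blanket appeal to ``the Faltings--Kawasaki blow-up construction'' glosses over this reduction, which is genuinely needed because Kawasaki's Macaulayfication theorem is stated for equidimensional schemes. In (b)$\Leftrightarrow$(d), the paper simply cites Faltings' Satz 4 for both directions, so your suggestion to argue the converse directly by varying $\fb$ is an alternative you could attempt but is more work than necessary. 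For (c)$\Rightarrow$(b) and for (e) your outlines match the paper's arguments.
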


Theorem \ref{main3a} shows the existence of Macaulayfication of the spectrum of a local ring provided this local ring is quotient of  a Cohen-Macaulay ring. This result in fact a generalization of Kawasaki's Theorem on Macaulayfication \cite[Theorem 1.1]{tk1} for local rings. Furthermore, by Theorems \ref{main1a} and \ref{main3a}, a local ring is a quotient of a Cohen-Macaulay ring if and only if there is a finitely generated and faithful module over the ring who has a p-standard system of parameters. 

The key point in the proof of Theorem \ref{main3a} is to show that a local ring has a p-standard system of parameters if and only if it is universally catenary and all its formal fibers are Cohen-Macaulay. Then we use several times results of Faltings, Grothendieck and Kawasaki to relate the later properties with the other equivalent conditions in the theorem.
\medskip

To summarize the content of this paper, in Section \ref{section2} we study the local cohomology supported by an ideal generated by a dd-sequence, a slightly generalized notion of p-standard system of parameters (see Lemma \ref{lem0}). p-Standard system of parameters is defined via the local cohomology annihilators while dd-sequence is defined by using Huneke's notion of d-sequence. We will see in Section \ref{section3} that in certain cases, dd-sequences as sequences of elements in $\fm$ is easier to handle than p-standard sytem of parameters. The main result of this section is a splitting short exact sequence for local cohomology modules in Theorem \ref{thm12}. 

The main technical results for the proofs of Theorems \ref{main1a} and \ref{main2a} are presented in Section \ref{section3}. Using results in Section \ref{section2}, we show that the existence of p-standard system of parameters is a local property. That means, if a module has a p-standard system of parameters then so does its localization at any prime ideal (Proposition \ref{pr5}). 

We prove Theorem \ref{main1a} in Section \ref{section4}. We first prove Theorem \ref{main2a} where the technical results in the former sections find their applications.

Theorem \ref{main3a} will be proved in both Sections \ref{section5} and \ref{section6}. In Section \ref{section5} the equivalence of the conditions (a) and (b) in Theorem \ref{main3a} is proved by Theorem \ref{main11}. Various consequences of this theorem characterizing quotients of Cohen-Macaulay local rings are presented afterward. The last section is involved to prove the rest of Theorem \ref{main3a} (see Theorem \ref{main12}) and to discuss a conjectural relationship between Macaulayfication (originally, desingularization) and universal catenaricity of a Noetherian ring (Theorem \ref{catenary}). This was motivated by Grothendieck's works on connection between desingularization and excellent rings in \cite{gr}.


\section{Local cohomology supported by dd-sequences} \label{section2}
Throughout this paper $(R, \fm)$ denotes a commutative Noetherian local ring. 

 In this section we study the local cohomology with support generated by a dd-sequence and  their annihilator ideals. Let $M$ be a finitely generated $R$-module of dimension $d$. Denote by $\fa_i(M)=\Ann_R H^i_\fm(M)$ the annihilator ideal of the $i$-th local cohomology module $H^i_\fm(M)$ and set $\fa(M)=\fa_0(M)\ldots\fa_{d-1}(M)$. Due to Huneke \cite{hu}, a sequence $x_1, \ldots, x_s\in \fm$ is  a d-sequence on $M$ if $(x_1, \ldots, x_i)M:x_j=(x_1, \ldots, x_i)M:x_{i+1}x_j$ for all $0\!
 \leq i<j\leq s$. 

\begin{definition} A system of parameters $x_1, \ldots, x_d$ of $M$ is called {\it p-standard} if $x_d\in \fa(M)$ and $x_i\in \fa(M/(x_{i+1}, \ldots, x_d)M)$ for $i=d-1, \ldots, 1$ (cf. \cite{acta}). In \cite{cc}, the authors extend this notion for sequence of elements. We say that a sequence $x_1, \ldots, x_s $ in $\frak m$ is a {\it dd-sequence} on $M$ if $x_1^{n_1}, \ldots, x_i^{n_i} $ is a d-sequence on $M/(x_{i+1}^{n_{i+1}}, \ldots, x_s^{n_s})M$ for all $n_1, \ldots, n_s>0$ and $i=1, \ldots, s$. 
\end{definition}

The two notions above are very close in case of system of parameters as in the following lemma.

\begin{lemma}\cite[Corollary 3.9]{cc}\label{lem0}
Let $M$ be a finitely generated $R$-module with a system of parameters $x_1, \ldots, x_d$, $d=\dim M$. If $x_1, \ldots, x_d$ is a p-standard system of parameters then it is a dd-sequence on $M$. Vice versa, if $x_1, \ldots, x_d$ is a dd-sequence then $x_1^{n_1}, \ldots, x_d^{n_d}$ is a p-standard system of parameters of $M$ for all $n_i\geq i$, $i=1, \ldots, d$.
\end{lemma}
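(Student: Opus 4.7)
The plan is to prove the two implications separately, using in both cases the bridge between local cohomology annihilators and d-sequences underlying the original paper \cite{acta}.

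For the forward implication, I assume $x_1,\ldots,x_d$ is p-standard on $M$ and fix positive exponents $n_1,\ldots,n_d$ together with an index $i$. The goal is to show that $x_1^{n_1},\ldots,x_i^{n_i}$ is a d-sequence on $N:=M/(x_{i+1}^{n_{i+1}},\ldots,x_d^{n_d})M$. My first step is to transfer the annihilator condition from the original quotient $M/(x_{i+1},\ldots,x_d)M$, where the p-standard hypothesis directly applies, to $N$. For this I use the short exact sequences
\[
0\to x_k^{n_k-1}P_k/x_k^{n_k}P_k \to P_k/x_k^{n_k}P_k \to P_k/x_k^{n_k-1}P_k\to 0,
\]
where $P_k := M/(x_{i+1}^{n_{i+1}},\ldots,x_{k-1}^{n_{k-1}},x_{k+1},\ldots,x_d)M$, together with the induced long exact sequences in $H^\bullet_\fm$, iterating on $k = i+1,\ldots,d$ and on the exponent. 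Since the kernel in each short exact sequence is annihilated by $x_k$, and $x_k\in \fa(\cdot)$ by the p-standard hypothesis applied in successive quotients, careful bookkeeping yields $x_j \in \fa(N/(x_{j+1}^{n_{j+1}},\ldots,x_i^{n_i})N)$ for every $j\leq i$; since $\fa(\cdot)$ is an ideal, $x_j^{n_j}$ lies there as well. The final step is to invoke the classical criterion (essentially the observation motivating the definition in \cite{acta}) that any system of parameters satisfying the p-standard annihilator condition is automatically a d-sequence, now applied to $x_1^{n_1},\ldots,x_i^{n_i}$ on $N$.

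For the reverse implication, assume $x_1,\ldots,x_d$ is a dd-sequence and pick $n_i \geq i$. The task is to verify $x_i^{n_i} \in \fa(M/(x_{i+1}^{n_{i+1}},\ldots,x_d^{n_d})M)$. The plan is to apply the dd-sequence hypothesis to the subsequence $x_1^{n_1},\ldots,x_i^{n_i}$ on the quotient $N := M/(x_{i+1}^{n_{i+1}},\ldots,x_d^{n_d})M$, which provides the d-sequence property. Standard results on d-sequences (via the Koszul-to-local-cohomology comparison) then furnish, on each $H^j_\fm(N)$ with $j<i$, an $i$-step filtration whose graded pieces are killed by $x_i$, so that $x_i^i$ annihilates $H^j_\fm(N)$ for all such $j$. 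Since $n_i\geq i$, this gives $x_i^{n_i}\in \fa(N)$; running the argument over all $i$ yields the p-standard condition for $x_1^{n_1},\ldots,x_d^{n_d}$.

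I expect the main difficulty to lie in the forward direction, specifically in propagating the annihilator condition through powers of the quotiented-out parameters while maintaining tight control of the new exponents, which forces a nested induction on $k$ and $n_k$. By contrast, both the ``p-standard implies d-sequence'' criterion and the d-sequence local cohomology vanishing used in the reverse direction are reasonably standard tools in this area, so once the propagation step is secured, the remainder of the argument should be a matter of assembling known results.
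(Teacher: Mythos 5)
The paper does not prove Lemma~\ref{lem0}: it is quoted verbatim from \cite[Corollary~3.9]{cc}, and the body of the present paper never reproduces the argument. So the comparison is really against the external source, which I do not reproduce here; I will assess the proposal on its own terms.

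The reverse direction is essentially right in spirit, but the bookkeeping as written does not yield the bound $n_i\ge i$. You conclude that ``$x_i^i$ annihilates $H^j_\fm(N)$ for all $j<i$'' and then ``since $n_i\ge i$, this gives $x_i^{n_i}\in\fa(N)$.'' But $\fa(N)=\fa_0(N)\cdots\fa_{i-1}(N)$ is a \emph{product} of $i$ ideals, not an intersection; knowing $x_i^i\in\fa_j(N)$ for each $j$ only puts $x_i^{i^2}$ in the product. What the dd-sequence machinery actually gives (see Lemmas~\ref{lem2}(i) and \ref{pr8}) is the stronger statement $x_i\in\fa_j(N)$ for each $j<i$, from which $x_i^i\in\fa(N)$ and hence $x_i^{n_i}\in\fa(N)$ for $n_i\ge i$. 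So the conclusion is salvageable, but the ``$i$-step filtration killed by $x_i$'' heuristic overshoots the exponent by a factor of $i$. You also need to justify that $x_1,\ldots,x_i$ is a dd-sequence (not merely a d-sequence) on $N=M/(x_{i+1}^{n_{i+1}},\ldots,x_d^{n_d})M$ before invoking these lemmas, which does require an appeal to the hereditary properties in Lemma~\ref{lem1}.

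The forward direction has a genuine gap in the propagation step. The p-standard hypothesis gives $x_j\in\fa\bigl(M/(x_{j+1},\ldots,x_d)M\bigr)$ at exponent $1$, and you want $x_j\in\fa\bigl(M/(x_{j+1}^{n_{j+1}},\ldots,x_d^{n_d})M\bigr)$ for arbitrary exponents. The short exact sequences you propose have two problems. First, the kernel $x_k^{n_k-1}P_k/x_k^{n_k}P_k$ is not $P_k/x_kP_k$ but rather a quotient of it by the image of $(0:_{P_k}x_k^{n_k-1})$, and the annihilator ideals $\fa_j$ are not monotone under passing to such quotients. Second, the long exact sequence in $H^\bullet_\fm$ only yields \emph{products} of annihilator ideals (e.g.\ $\fa_j(K)\fa_j(L/K)\subseteq\fa_j(L)$), so even if one could compare all the pieces, you would end up needing $x_j^m$ in the target ideal for some $m>1$, not $x_j$ itself; the lemma's forward direction asserts the full dd-sequence property at \emph{all} exponents, so one cannot afford exponent inflation at this stage. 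Finally, the appeal to a ``classical criterion'' that a p-standard system of parameters is automatically a d-sequence treats as folklore something that is itself a substantial theorem from \cite{acta}; if you are going to cite it black-box, the proposal reduces to restating the result rather than proving it, and if you intend to re-prove it, the current sketch does not engage with the actual mechanism (Schenzel's $\fa(M)\subseteq\fb(M)$ relation, which controls the colon modules $(x_1,\ldots,x_{i-1})M:x_i$ for every system of parameters, and an induction on dimension along those colons).
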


Belows we recall some basic properties of dd-sequences. 

\begin{lemma}\cite[Proposition 3.4]{cc}\label{lem1} 
Let $M$ be a finitely generated $R$-module and $x_1, \ldots, x_s$ be a dd-sequence on $M$. 

\item(i) $x_{i_1}, \ldots, x_{i_r}$ is a dd-sequence on $M$ for any $1\leq i_1<\ldots<i_r\leq s$.

\item(ii) $x_1, \ldots, x_{i-1}, x_{i+1}, \ldots, x_s$ is a dd-sequence on $M/x_iM$ for $i=1, \ldots, s$.
\end{lemma}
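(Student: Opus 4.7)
The plan is to reduce both statements to carefully chosen instances of the defining property of a dd-sequence on $x_1,\ldots,x_s$ and then to invoke two standard facts: (I) if $y_1,\ldots,y_r$ is a d-sequence on a module $N$, then $y_2,\ldots,y_r$ is a d-sequence on $N/y_1N$; and (II) the structural flexibility that the dd-sequence conditions hold for \emph{every} choice of positive exponents $n_1,\ldots,n_s$, so one may freely set some $n_\ell$ to $1$ or to an arbitrarily large integer without losing any of the d-sequence conclusions.

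For part (i), by iteration it is enough to treat the deletion of a single term $x_k$. Fixing exponents $m_\ell>0$ for $\ell\ne k$ and an index $j\ne k$, I unpack the required d-sequence condition on the appropriate quotient of $M$. In the easy case $j<k$ the modded-out part still includes a power of $x_k$ for any choice, and the conclusion falls out of the dd-sequence hypothesis at position $j$ with any $n_k$. In the harder case $j>k$ the element $x_k$ no longer appears in the quotient, so one must remove it from the interior of the d-sequence provided by the hypothesis; the plan is to use (II) to set $n_k=1$ and then move $x_k$ into a leading position via a permutation/contraction argument based on the colon-ideal definition of a d-sequence, so that (I) applies. Part (ii) is handled in the same spirit: if $i>j$, apply the hypothesis with $n_i=1$ and the resulting d-sequence lives on exactly $M/(x_i,x_{j+1}^{m_{j+1}},\ldots)M$ as required; if $i<j$, the d-sequence given by the hypothesis has $x_i$ as an interior entry and one passes to the quotient by $x_i$ using (I) after the same interior-to-leading rearrangement.

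The main obstacle in both parts is this interior case, where the element to be deleted or pushed into the quotient sits strictly inside the d-sequence supplied by the dd-sequence hypothesis. Since subsequences of d-sequences are in general not d-sequences, one cannot simply invoke (I); instead, one must exploit the family of d-sequences indexed by all positive exponent tuples (fact (II)) to arrange the offending element to be either trivial or at the head of the sequence, after which standard d-sequence arithmetic closes the argument. The details of this rearrangement, which rest on the colon-ideal definition of a d-sequence and on the symmetry built into Huneke's theory once combined with the dd-sequence power-freedom, form the technical core of the proof.
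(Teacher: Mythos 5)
The paper does not prove this lemma itself; it is quoted directly from Cuong--Cuong \cite[Proposition~3.4]{cc} with no internal argument to compare with, so what follows assesses your proposal on its own terms.

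Two steps do not hold up. For the ``easy case $j<k$'' of part (i) you assert that the modded-out part still contains a power of $x_k$; that is false. Once $x_k$ is deleted from the sequence, the quotient at cutoff $j<k$ is $M/(x_{j+1}^{m_{j+1}},\ldots,\widehat{x_k^{m_k}},\ldots,x_s^{m_s})M$, in which $x_k$ does not appear. The dd-sequence hypothesis only gives the required d-sequence condition modulo $x_k^{n_k}$ for every $n_k>0$; to strip $x_k^{n_k}$ out of the quotient you must intersect over all $n_k$ and invoke $\bigcap_n x_k^nL=0$ (valid for finitely generated $L$ over the local ring $R$), a Krull-intersection step your outline never mentions. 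For the ``harder case $j>k$,'' the interior deletion, your plan to ``move $x_k$ into a leading position via a permutation/contraction argument'' is unspecified and, as stated, unsound: d-sequences are not permutable, removing or rearranging an interior element of a d-sequence generally destroys the d-sequence property, and the usd-type permutability you allude to (``symmetry built into Huneke's theory'') is a nontrivial property of its own that would have to be established -- it is essentially a reformulation of the subsequence claim (i) you are trying to prove. You correctly flag this rearrangement as ``the technical core of the proof'' but supply no argument for it, and without that argument the proposal is incomplete. A genuine proof needs something more than elementary colon manipulation at this point -- for instance an induction that never presumes permutability, or the length-theoretic characterization of dd-sequences developed in \cite{cc} -- and the same gap reappears in your treatment of part (ii) whenever the removed index $i$ falls strictly inside the d-sequence supplied by the hypothesis.
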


As usual we denote a sequence $x_1, \ldots, x_{i-1}, x_{i+1}, \ldots, x_s$ by $x_1, \ldots, \widehat{x_i}, \ldots, x_s$. By a standard method for d-sequences and Koszul homology (see Goto-Yamagishi \cite[Theorem 1.14]{gy}) and using the same argument as in the proof of Theorem 2.3 of Goto-Yamagishi  \cite{gy} we have
\begin{lemma}\label{lem2} Let $x_1, \ldots, x_s$ be a dd-sequence on $M$. It holds

\item(i) $(x_{r+1}, \ldots, x_s)H^i_{(x_1, \ldots, x_r)}(M)=0$, for $ i<r, 1\leq i\leq s$.

\item(ii) $(x_1^{n_1+m_1}, \ldots, x_s^{n_s+m_s})M:(x_1^{m_1}\ldots x_s^{m_s})$
$$\hspace{4cm}=\sum_{i=1}^s(x_1^{n_1}, \ldots, \widehat{x_i^{n_i}},\ldots, x_s^{n_s})M:x_i+(x_1^{n_1}, \ldots, x_s^{n_s})M.$$
\end{lemma}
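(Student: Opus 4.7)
Both parts reduce to the d-sequence case via the observation that, specializing the dd-sequence definition to $i=s$ and invoking Lemma~\ref{lem1}(i), every subsequence of power-raised terms $x_{i_1}^{n_{i_1}},\ldots,x_{i_k}^{n_{i_k}}$ is a d-sequence on $M$ itself, at arbitrary positive exponents. This uniformity is what lets one imitate the proofs of Theorems~1.14 and~2.3 of Goto--Yamagishi~\cite{gy} almost verbatim.

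For (ii), I would prove the equality by double inclusion, the central input being the basic d-sequence colon identity
\[
(x_1^{n_1},\ldots,x_s^{n_s})M:x_ix_j=(x_1^{n_1},\ldots,x_s^{n_s})M:x_j\quad(i\neq j),
\]
which holds because $x_1^{n_1},\ldots,x_s^{n_s}$ is a d-sequence on $M$. I would induct on $|\mathbf m|=m_1+\cdots+m_s$: the base case $|\mathbf m|=1$ is a direct rearrangement of the identity above, and the inductive step peels off one additional factor $x_i$ from the denominator at a time and absorbs it into the summand $(x_1^{n_1},\ldots,\widehat{x_i^{n_i}},\ldots,x_s^{n_s})M:x_i$ on the right-hand side, after adjusting exponents via the same identity. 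This mirrors the bookkeeping in the proof of Goto--Yamagishi's Theorem~2.3.

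For (i), I would use Koszul duality to express
\[
H^i_{(x_1,\ldots,x_r)}(M)=\varinjlim_n H_{r-i}\bigl(x_1^n,\ldots,x_r^n;M\bigr),
\]
with transition maps induced by multiplication. For each $j\in\{r+1,\ldots,s\}$ and each $n\geq 1$, the sequence $x_1^n,\ldots,x_r^n,x_j^n$ is a d-sequence on $M$, so by Goto--Yamagishi's Theorem~1.14 the element $x_j^n$ annihilates $H_p(x_1^n,\ldots,x_r^n;M)$ for every $p\geq 1$, in particular for $p=r-i\geq 1$. The d-sequence equality $0:_Nx_j=0:_Nx_j^n$, applied to Koszul cycles and boundaries, upgrades this annihilation from $x_j^n$ to $x_j$, and the direct limit inherits it.

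\textbf{The main obstacle} lies in (i): the Koszul annihilator result delivers annihilation by the \emph{power} $x_j^n$, whose exponent grows with the index of the direct system, whereas the statement claims annihilation by the unreduced element $x_j$. Pushing the annihilation down to $x_j$ and checking compatibility with the transition maps of the Koszul direct system is precisely where the full dd-sequence hypothesis---d-sequenceness at arbitrary powers, not merely the first power---is essential.
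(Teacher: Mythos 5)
Your plan follows the same route the paper itself indicates: reduce to the d-sequence case via the $i=s$ specialization of the dd-sequence definition together with Lemma~\ref{lem1}(i), and then transplant the Koszul-homology arguments of Goto--Yamagishi (their Theorems~1.14 and~2.3). The paper gives no further details beyond this citation, so your outline is aligned with it.

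The ``main obstacle'' you flag in (i), however, is not a genuine obstacle, and the fix you propose for it is shaky. You note that each subsequence of power-raised terms is a d-sequence \emph{at arbitrary positive exponents}; but you then apply this only with the uniform exponent $n$, getting $x_j^n\,H_{r-i}(x_1^n,\ldots,x_r^n;M)=0$, and try to descend to $x_j$ by invoking $0:_Nx_j=0:_Nx_j^n$ ``applied to Koszul cycles and boundaries.'' That descent is not automatic: the Koszul homology $H_{r-i}(x_1^n,\ldots,x_r^n;M)$ is a subquotient of a direct sum of copies of $M$, and annihilator equalities such as $0:_Mx_j=0:_Mx_j^n$ do not in general pass to subquotients, so the claimed upgrade from $x_j^n$ to $x_j$ is unsupported as written. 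The clean way to use the hypothesis is to choose the exponents asymmetrically: for each $n$ take $x_1^n,\ldots,x_r^n,x_j$ with exponent $1$ on $x_j$ (allowed since dd-sequences are d-sequences at \emph{any} positive exponents on \emph{any} subsequence), and then Goto--Yamagishi's annihilation theorem hands you $x_j\,H_{r-i}(x_1^n,\ldots,x_r^n;M)=0$ directly; passing to the direct limit over $n$ is then immediate since $x_j$ is a fixed element of $R$ commuting with the transition maps. With that adjustment the proof of (i) closes, and your sketch of (ii) by induction on $|\mathbf m|$ using the d-sequence colon identities is the right bookkeeping.
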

\begin{lemma}\label{pr8}
Let $x_1, \ldots, x_s$ be a dd-sequence on $M$. For $i<s$ we have
$$H^i_{(x_1, \ldots, x_s)}(M)\simeq H^i_{(x_1, \ldots, x_{i+1})}(M)\simeq (0:x_{i+1})_{H^i_{(x_1, \ldots, x_i)}(M)}.$$
\end{lemma}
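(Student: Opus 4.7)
The plan is to derive both isomorphisms from the Mayer-Vietoris style long exact sequence obtained from the short exact sequence of \v{C}ech complexes associated with adjoining one generator to an ideal. Concretely, for $J=(y_1,\ldots,y_r)$ and an additional element $y$, this sequence reads
\[
\cdots \to H^{j-1}_J(M)_y \to H^j_{J+(y)}(M) \to H^j_J(M) \to H^j_J(M)_y \to H^{j+1}_{J+(y)}(M) \to \cdots.
\]
The dd-sequence hypothesis enters only via Lemma~\ref{lem2}(i), which yields the vanishing of suitable localizations of $H^j_{(x_1,\ldots,x_r)}(M)$ at $x_{r+1}$; these vanishings collapse the sequence into the two claimed isomorphisms.

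For the first isomorphism $H^i_{(x_1,\ldots,x_s)}(M)\simeq H^i_{(x_1,\ldots,x_{i+1})}(M)$, I would argue by descending induction on $r\in\{i+2,\ldots,s\}$, showing at each step that $H^i_{(x_1,\ldots,x_r)}(M)\simeq H^i_{(x_1,\ldots,x_{r-1})}(M)$. Taking $J=(x_1,\ldots,x_{r-1})$ and $y=x_r$ in the sequence above, Lemma~\ref{lem2}(i) gives $x_r\cdot H^j_{(x_1,\ldots,x_{r-1})}(M)=0$ for all $j<r-1$. Since $i<r-1$, both $H^{i-1}_{(x_1,\ldots,x_{r-1})}(M)_{x_r}$ and $H^i_{(x_1,\ldots,x_{r-1})}(M)_{x_r}$ vanish, and the middle arrow becomes an isomorphism. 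Iterating down to $r=i+2$ yields the claim.

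For the second isomorphism I would apply the same Mayer-Vietoris with $J=(x_1,\ldots,x_i)$ and $y=x_{i+1}$. By Lemma~\ref{lem2}(i) (when $i\geq 1$; the case $i=0$ is vacuous), $x_{i+1}$ annihilates $H^{i-1}_{(x_1,\ldots,x_i)}(M)$, so $H^{i-1}_{(x_1,\ldots,x_i)}(M)_{x_{i+1}}=0$. The exact sequence then identifies $H^i_{(x_1,\ldots,x_{i+1})}(M)$ with the kernel of the localization map $H^i_{(x_1,\ldots,x_i)}(M)\to H^i_{(x_1,\ldots,x_i)}(M)_{x_{i+1}}$, i.e., with the full $x_{i+1}$-torsion submodule $\bigcup_{k\geq 1}(0:x_{i+1}^k)_{H^i_{(x_1,\ldots,x_i)}(M)}$.

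The main technical obstacle is to replace this $x_{i+1}$-torsion by the first-power annihilator $(0:x_{i+1})$. For this I would use the colimit presentation $H^i_{(x_1,\ldots,x_i)}(M)=\varinjlim_n M/(x_1^n,\ldots,x_i^n)M$ and exploit the dd-sequence definition: for each $n\geq 1$ the sequence $x_1^n,\ldots,x_i^n,x_{i+1}$ is a d-sequence on $M$, and the standard colon-stabilization property of d-sequences yields $(x_1^n,\ldots,x_i^n)M:x_{i+1}^k=(x_1^n,\ldots,x_i^n)M:x_{i+1}$ for every $k\geq 1$. Since both $(0:x_{i+1})$ and the union $\bigcup_k(0:x_{i+1}^k)$ commute with filtered colimits, passing to the limit gives the required equality and, combined with the two previous steps, finishes the proof.
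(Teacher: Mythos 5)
Your proof is correct, and the overall strategy matches the paper's: both proofs run the Mayer--Vietoris (i.e.\ $\check{\mathrm C}$ech) long exact sequence obtained by adjoining one generator at a time, invoke Lemma~\ref{lem2}(i) to kill the localized terms, obtain $H^i_{(x_1,\ldots,x_s)}(M)\simeq H^i_{(x_1,\ldots,x_{i+1})}(M)$ by iterating, and then identify $H^i_{(x_1,\ldots,x_{i+1})}(M)$ with the kernel of the localization map on $H^i_{(x_1,\ldots,x_i)}(M)$. Where you genuinely diverge is in converting the full $x_{i+1}$-torsion (the kernel of localization) into the first-power annihilator $(0:x_{i+1})$. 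The paper works directly with a representative $a\in M/(x_1^t,\ldots,x_{s-1}^t)M$ of an element of $\ker\psi$ and tracks the colon condition across the transition maps of the colimit; to collapse that condition down to a fixed colon ideal it invokes the Goto--Yamagishi type identity of Lemma~\ref{lem2}(ii), which rewrites $(x_1^{n_1+m_1},\ldots)M:(x_1^{m_1}\cdots)$ as a sum of fixed-exponent colons. You instead work layer-by-layer: you use only the elementary d-sequence stabilization $(x_1^n,\ldots,x_i^n)M:x_{i+1}^k=(x_1^n,\ldots,x_i^n)M:x_{i+1}$, valid because $x_1^n,\ldots,x_i^n,x_{i+1}$ is a d-sequence on $M$ by the dd-sequence definition and Lemma~\ref{lem1}(i), and then appeal to exactness of filtered colimits to conclude that $(0:x_{i+1}^k)_{H^i}=(0:x_{i+1})_{H^i}$ for all $k$. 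This bypasses Lemma~\ref{lem2}(ii) entirely, trading a computation inside the direct system for an abstract commutation of kernels with filtered colimits. Your argument is a bit cleaner and uses less machinery; the paper's version is perhaps more self-contained in that it never needs to invoke the categorical fact about filtered colimits, and it establishes the more explicit colon identity of Lemma~\ref{lem2}(ii), which the paper reuses later (e.g.\ in Corollary~\ref{co9pr} and Corollary~\ref{co6}).
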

\begin{proof}
Since $x_1, \ldots, x_k$, $1\leq k\leq s$, is also a dd-sequence on $M$ by Lemma \ref{lem1}, it suffices to prove that
\begin{enumerate}[(i)]
\item $H^i_{(x_1, \ldots, x_s)}(M)\simeq H^i_{(x_1, \ldots, x_{s-1})}(M)$.

\item $H^{s-1}_{(x_1, \ldots, x_s)}(M)\simeq (0:x_s)_{H^{s-1}_{(x_1, \ldots, x_{s-1})}(M)}.$
\end{enumerate}

\noindent Denote the $i$-th $\check{\mathrm C}$ech cohomology module of $M$ with respect to the ideal $(x_1, \ldots, x_s)$ by $\check H^i(x_1, \ldots, x_s, M)$. There is a long exact sequence
\begin{multline*}\ldots \longrightarrow \check H^i(x_1, \ldots, x_s, M)\longrightarrow\check H^i(x_1, \ldots, x_{s-1}, M)\\
\longrightarrow \check H^i(x_1, \ldots, x_{s-1}, M_{(x_s)})\longrightarrow \check H^{i+1}(x_1, \ldots, x_s, M)\longrightarrow \ldots,
\end{multline*}
where $M_{(x_s)}$ is the localization of $M$ by the multiplicatively closed set $\{1, x_s, x_s^2, \ldots\}$. Since $\check H^i(x_1, \ldots, x_s, M)\simeq H^i_{(x_1, \ldots, x_s)}(M)$ and $\check H^i(x_1, \ldots, x_{s-1}, M_{(x_s)})\simeq $\linebreak $ H^i_{(x_1, \ldots, x_{s-1})}(M)_{(x_s)}$, there is a long exact sequence of local cohomology modules
\begin{multline*}\ldots \longrightarrow H^i_{(x_1, \ldots, x_s)}(M)\longrightarrow H^i_{(x_1, \ldots, x_{s-1})}(M)
\longrightarrow H^i_{(x_1, \ldots, x_{s-1})}(M)_{(x_s)}\\\longrightarrow H^{i+1}_{(x_1, \ldots, x_s)}(M)\longrightarrow \ldots
\end{multline*}
Remind that $x_1, \ldots, x_s$ is a dd-sequence on $M$, so $x_s\in \Ann_R H^i_{(x_1, \ldots, x_{s-1})}(M)$ for all $i<s-1$ by Lemma \ref{lem2}($i$) and $H^i_{(x_1, \ldots, x_s)}(M)_{(x_s)}=0$. Therefore, $H^i_{(x_1, \ldots, x_s)}(M)\simeq H^i_{(x_1, \ldots, x_{s-1})}(M)$ for all $i<s-1$ and there is an exact sequence
\begin{multline*}0 \longrightarrow H^{s-1}_{(x_1, \ldots, x_s)}(M)\longrightarrow H^{s-1}_{(x_1, \ldots, x_{s-1})}(M)
\stackrel{\psi}{\longrightarrow} H^{s-1}_{(x_1, \ldots, x_{s-1})}(M)_{(x_s)}\\\longrightarrow H^s_{(x_1, \ldots, x_s)}(M)\longrightarrow 0,
\end{multline*}
where $\psi$ is the natural homomorphism of localization. Note that we can identify $H^{s-1}_{(x_1, \ldots, x_{s-1})}(M)= \underset{t}{\varinjlim} M/(x_1^t, \ldots, x_{s-1}^t)M$. Let $\bar a\in H^{s-1}_{(x_1, \ldots, x_{s-1})}(M)$, where $a\in$\linebreak $M/(x_1^t, \ldots, x_{s-1}^t)M$ for some $t>0$. So $\bar a\in \ker \psi$ if and only if $x_s^r\bar a=0$ for some $r>0$ if and only if
$x_s^ra(x_1\ldots x_{s-1})^t\in (x_1^{t+t^\pr},\ldots, x_{s-1}^{t+t^\pr})M$. Equivalently,
\[\begin{aligned}a\in  (x_1^{t+t^\pr},\ldots, x_{s-1}^{t+t^\pr})M:x_s^r(x_1\ldots x_{s-1})^{t^\pr}&=(x_1^{t+t^\pr},\ldots, x_{s-1}^{t+t^\pr})M:x_s(x_1\ldots x_{s-1})^{t^\pr}\\&=(x_1^{t+1},\ldots, x_{s-1}^{t+1})M:(x_1\ldots x_{s-1}x_s)\end{aligned}\]
by Lemma \ref{lem2}($ii$) and the assumption $x_1,\ldots, x_s$ is a dd-sequence. So $x_s\bar a=0$. Therefore,
$$H^{s-1}_{(x_1, \ldots, x_s)}(M)\simeq \ker \psi\simeq (0:x_s)_{H^{s-1}_{(x_1, \ldots, x_{s-1})}(M)}.$$
\end{proof}
\begin{corollary}\label{co9pr}
Let $x_1, \ldots, x_s$ be a dd-sequence on $M$. We have
$$\Ann_R H^i_{(x_1, \ldots, x_s)}(M)=\bigcap_{n_1, \ldots, n_i>0}\Ann \frac{(x_1^{n_1}, \ldots, x_i^{n_i})M:x_{i+1}}{(x_1^{n_1+1}, \ldots, x_i^{n_i+1})M:(x_1\ldots x_i)}$$
for $i<s$.
\end{corollary}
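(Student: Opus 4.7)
The plan is to invoke Lemma \ref{pr8} to replace $H^i_{(x_1,\ldots,x_s)}(M)$ with the submodule $(0:x_{i+1})_{H^i_{(x_1,\ldots,x_i)}(M)}$, and then realize the latter as an ascending union of copies of the quotient appearing in the statement. Write $H^i_{(x_1,\ldots,x_i)}(M)=\varinjlim_{\vec n\in\bN^i}M/(x_1^{n_1},\ldots,x_i^{n_i})M$ with transition maps given by multiplication by the appropriate monomials in $x_1,\ldots,x_i$. Put $B(\vec n):=(x_1^{n_1+1},\ldots,x_i^{n_i+1})M:(x_1\cdots x_i)$. By Lemma \ref{lem2}(ii), the colon $(x_1^{\vec n+\vec m})M:(x_1^{m_1}\cdots x_i^{m_i})$ is independent of $\vec m\geq\vec 1$, so the canonical map $M/(x_1^{\vec n})M\to H^i_{(x_1,\ldots,x_i)}(M)$ has kernel $B(\vec n)/(x_1^{\vec n})M$.

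For each $\vec n$ define $\phi_{\vec n}\colon(x_1^{\vec n})M:x_{i+1}\to H^i_{(x_1,\ldots,x_i)}(M)$ by sending $a$ to its class at level $\vec n$. Because $x_{i+1}a\in(x_1^{\vec n})M$, the image lies in $(0:x_{i+1})_{H^i_{(x_1,\ldots,x_i)}(M)}$. To identify $\im\phi_{\vec n}$ with the displayed quotient I need the inclusion $B(\vec n)\subseteq(x_1^{\vec n})M:x_{i+1}$. Decomposing $B(\vec n)=\sum_k((x_1^{\vec n},\widehat{x_k^{n_k}})M:x_k)+(x_1^{\vec n})M$ via Lemma \ref{lem2}(ii), for $b$ in the $k$-th summand I pass to $\bar M:=M/(x_1^{\vec n},\widehat{x_k^{n_k}})M$ and apply Lemma \ref{lem1}(ii) iteratively to the powered dd-sequence $x_1^{n_1},\ldots,x_s^{n_s}$ (legitimate by the power-invariance in the definition of a dd-sequence) until $x_k^{n_k},x_{i+1}$ survives as a dd-sequence on $\bar M$; then Lemma \ref{lem2}(i) gives $x_{i+1}\cdot H^0_{(x_k)}(\bar M)=0$, and since $\bar b\in 0:_{\bar M}x_k\subseteq H^0_{(x_k)}(\bar M)$, we obtain $x_{i+1}\bar b=0$, i.e.\ $x_{i+1}b\in(x_1^{\vec n})M$. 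With this in hand, $\ker\phi_{\vec n}=B(\vec n)$ and $\im\phi_{\vec n}\cong((x_1^{\vec n})M:x_{i+1})/B(\vec n)$.

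The core step is the surjection $\bigcup_{\vec n}\im\phi_{\vec n}=(0:x_{i+1})_{H^i_{(x_1,\ldots,x_i)}(M)}$. Given $\alpha$ in the right-hand side represented by $a$ at level $\vec n$, the relation $x_{i+1}\alpha=0$ in $H^i_{(x_1,\ldots,x_i)}(M)$ translates to $x_{i+1}a\in B(\vec n)$, i.e.\ $(x_1\cdots x_i)x_{i+1}a\in(x_1^{\vec n+\vec 1})M$. Hence the transition $a':=(x_1\cdots x_i)a$ represents $\alpha$ at level $\vec n+\vec 1$ and satisfies $x_{i+1}a'\in(x_1^{\vec n+\vec 1})M$, so $\alpha=\phi_{\vec n+\vec 1}(a')$. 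The family $\{\im\phi_{\vec n}\}$ is directed under the componentwise order (multiplication by $x_1^{n'_1-n_1}\cdots x_i^{n'_i-n_i}$ carries $(x_1^{\vec n})M:x_{i+1}$ into $(x_1^{\vec n'})M:x_{i+1}$), so the annihilator of the union equals the intersection of the annihilators of its members, which together with Lemma \ref{pr8} gives the claimed formula. The main technical obstacle is the inclusion $B(\vec n)\subseteq(x_1^{\vec n})M:x_{i+1}$, which hinges on the permutation bookkeeping afforded by Lemma \ref{lem1}; the remainder is a standard level-shifting manipulation in the direct limit.
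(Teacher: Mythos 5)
Your proof is correct and follows essentially the same route as the paper's: both start from Lemma \ref{pr8} to reduce to $(0:x_{i+1})$ inside the colimit $H^i_{(x_1,\ldots,x_i)}(M)$ and then invoke Lemma \ref{lem2}(ii) to normalize the colons in the denominator; you run the colimit over multi-indices $\vec n$ from the outset while the paper argues for the diagonal $t$ and then substitutes $x_j\mapsto x_j^{n_j}$ at the end, but this is only a bookkeeping difference. Your extra verification that $B(\vec n)\subseteq(x_1^{\vec n})M:x_{i+1}$ is a sound and tidy addition (and explains why the displayed quotient is genuinely a quotient module), though it is not strictly needed since $\Ann(A/B)=\{r:rA\subseteq B\}$ makes sense and is computed the same way whether or not $B\subseteq A$.
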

\begin{proof}
From Lemma \ref{pr8}, 
$$H^i_{(x_1, \ldots, x_s)}(M)\simeq (0:x_{i+1})_{H^i_{(x_1, \ldots, x_i)}(M)}=\underset{t}{\varinjlim} (0:x_{i+1})_{M/(x_1^t, \ldots, x_i^t)M}.$$
Hence, $a\in \Ann_R H^i_{(x_1, \ldots, x_s)}(M)$ if and only if for any $t>0$ and $u\in (x_1^t, \ldots, x_i^t)M:x_{i+1}$, there is $t^\pr>0$ such that $au(x_1\ldots x_i)^{t^\pr}\in (x_1^{t+t^\pr}, \ldots, x_i^{t+t^\pr})M$, or equivalently, 
\[\begin{aligned}au&\in (x_1^{t+t^\pr}, \ldots, x_i^{t+t^\pr})M:(x_1\ldots x_i)^{t^\pr}\\
&=\sum_{j=1}^i(x_1^{t+1}, \ldots, \widehat{x_j^{t+1} }, \ldots, x_i^{t+1})M:x_j+(x_1^t, \ldots, x_i^t)M\\
&=(x_1^{t+1}, \ldots, x_i^{t+1})M:(x_1\ldots x_i),\end{aligned}\]
by Lemma \ref{lem2}($ii$). In other words, $a\in \Ann_R H^i_{(x_1, \ldots, x_s)}(M)$ if and only if 
$$a[(x_1^t, \ldots, x_i^t)M:x_{i+1}]\subseteq (x_1^{t+1}, \ldots, x_i^{t+1})M:(x_1\ldots x_i),$$
for any $t>0$. Since $H^i_{(x_1, \ldots, x_s)}(M)\simeq H^i_{(x_1^{n_1}, \ldots, x_s^{n_s})}(M)$ for any $n_1, \ldots, n_s>0$, we can replace $x_1, \ldots, x_i$ by $x_1^{n_1}, \ldots, x_i^{n_i}$ to obtain
$$\Ann_R H^i_{(x_1, \ldots, x_s)}(M)=\bigcap_{n_1, \ldots, n_i>0}\Ann \frac{(x_1^{n_1}, \ldots, x_i^{n_i})M:x_{i+1}}{(x_1^{n_1+1}, \ldots, x_i^{n_i+1})M:(x_1\ldots x_i)}$$
\end{proof}
The next theorem gives a remarkable splitting short exact sequence of local cohomology modules, this exact sequence will be useful later on (see also \cite[Proposition 2.6]{dc}).
\begin{theorem}\label{thm12}
Let $(R, \fm)$ be a Noetherian local ring, $I\subseteq R$ be an ideal and $M$ be a finitely generated $R$-module. Fix a non-negative integer $k$. Assume that there is $x\in \fm$ such that $0:_Mx=0:_Mx^2$ and $xH^i_I(M)=0$ for all $i\leq k$. Let $N\subseteq 0:_Mx$ be a submodule. There is a splitting short exact sequence of local cohomology modules
$$0\longrightarrow H^i_I(M/N)\longrightarrow H^i_I(M/x^nM+N)\longrightarrow H^{i+1}_I(M/0:_Mx)\longrightarrow 0,$$
for all $i<k$  and $n\geq 5$. In particular,
$$H^i_I(M/x^nM+N)\simeq  H^i_I(M/N)\oplus H^{i+1}_I(M/0:_Mx).$$
\end{theorem}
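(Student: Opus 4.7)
The plan is to realize $M/(x^nM+N)$ as the cokernel of multiplication by $x^n$ on $M/N$ (factored through $\tilde M := M/(0:_Mx)$), to reduce the resulting long exact sequence to a short one using the $x$-annihilator hypothesis, and finally to split that short sequence by naturality with the analogous sequence at parameter $n-2$.

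First I would observe that $0:_Mx = 0:_Mx^2$ iterates by induction to $0:_Mx = 0:_Mx^t$ for every $t\geq 1$, so $\tilde M$ is $x$-torsion-free; since $N\subseteq 0:_Mx$, the map $\bar m\mapsto\overline{x^nm}$ fits into a short exact sequence
$$0\longrightarrow \tilde M\xrightarrow{\,x^n\,} M/N\longrightarrow M/(x^nM+N)\longrightarrow 0. \qquad (\dagger)$$
Applying $H^i_I$ to $(\dagger)$, the plan is to dispatch the connecting map $\phi_*\colon H^j_I(\tilde M)\to H^j_I(M/N)$ via the factorization
$$H^j_I(\tilde M)\xrightarrow{\sim} H^j_I(xM)\longrightarrow H^j_I(M)\xrightarrow{x^{n-1}} H^j_I(M)\longrightarrow H^j_I(M/N),$$
mirroring the module-level factorization $\tilde M\xrightarrow{\sim} xM\hookrightarrow M\xrightarrow{x^{n-1}} M\to M/N$. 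Since $xH^j_I(M)=0$ for $j\leq k$, the middle multiplication vanishes as soon as $n\geq 2$, so $\phi_*=0$ at every degree $j\leq k$. Applying this at $j=i$ and $j=i+1$ (both $\leq k$ when $i<k$) collapses the long exact sequence to
$$0\longrightarrow H^i_I(M/N)\xrightarrow{\iota_n} H^i_I(M/(x^nM+N))\xrightarrow{\partial_n} H^{i+1}_I(\tilde M)\longrightarrow 0. \qquad (\star)$$

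To split $(\star)$ I would compare it with the analogous sequence at parameter $n-2$ through the ladder of short exact sequences of modules
$$\begin{CD}
0 @>>> \tilde M @>x^n>> M/N @>>> M/(x^nM+N) @>>> 0 \\
@. @VV{x^2}V @VV{\mathrm{id}}V @VV{\rho}V @. \\
0 @>>> \tilde M @>x^{n-2}>> M/N @>>> M/(x^{n-2}M+N) @>>> 0
\end{CD}$$
with $\rho$ the canonical quotient. A short diagram chase on the long exact sequence of $0\to 0:_Mx\to M\to\tilde M\to 0$, using $x\cdot(0:_Mx)=0$ and $xH^j_I(M)=0$ for $j\leq k$, yields that $x^2$ annihilates $H^j_I(\tilde M)$ for every $j\leq k$. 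Hence in the induced cohomology ladder the right-hand vertical map on $H^{i+1}_I(\tilde M)$ is zero, and naturality forces $\partial_{n-2}\circ\rho_*=0$. Thus $\rho_*$ lands in $\im(\iota_{n-2})$; since $\iota_{n-2}$ is injective, $r:=\iota_{n-2}^{-1}\circ\rho_*$ is a well-defined map $H^i_I(M/(x^nM+N))\to H^i_I(M/N)$, and the identity on the middle column of the module ladder forces $r\circ\iota_n=\mathrm{id}$, splitting $(\star)$.

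The step I expect to be the main obstacle is the construction of the retraction $r$: the mere vanishing of $\phi_*$ only yields the short exact sequence $(\star)$, not a splitting. The essential new ingredient is the vanishing of $x^2$---not merely $x$---on $H^j_I(\tilde M)$ for $j\leq k$, which must be extracted from the long exact sequence of $0\to 0:_Mx\to M\to\tilde M\to 0$. The shift from parameter $n$ to $n-2$ is engineered precisely so that the induced vertical map on $H^{i+1}_I(\tilde M)$ is multiplication by $x^2$; requiring this shift together with the validity of $(\star)$ at parameter $n-2$ is what accounts for the lower bound on $n$.
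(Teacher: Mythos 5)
Your proof is correct and follows essentially the same strategy as the paper's: the short exact sequence $0\to M/(0:_Mx)\xrightarrow{x^n}M/N\to M/(x^nM+N)\to 0$, the vanishing of the induced map $H^j_I(M/(0:_Mx))\to H^j_I(M/N)$, the $x^2$-annihilation of $H^j_I(M/(0:_Mx))$ extracted from $0\to 0:_Mx\to M\to M/(0:_Mx)\to 0$, and the ladder between parameters $n$ and $n-2$ to manufacture the retraction. The only variation is that you kill the induced map by factoring $M/(0:_Mx)\xrightarrow{x^n}M/N$ through $M\xrightarrow{x^{n-1}}M$, whereas the paper obtains that vanishing from the same $n\leftrightarrow n-2$ ladder; your route is slightly cleaner and in fact needs only $n\geq 4$ rather than the stated $n\geq 5$.
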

\begin{proof}
We first imply that $x^2H^i_I(M/0:_Mx)=0$ for all $i\leq k$ from the long exact sequence of local cohomology modules
$$\ldots \longrightarrow H^i_I(0:_Mx)\longrightarrow H^i_I(M)\longrightarrow H^i_I(M/0:_Mx)\longrightarrow H^{i+1}_I(0:_Mx)\longrightarrow \ldots.$$
Since $0:_Mx^n=0:_Mx$, there is a commutative diagram
\[\begin{CD}
0@>>>M/0:_Mx  @>.x^n>> M/N@>>> M/x^nM+N@>>>0\\
&&@V.x^2VV  @| @VpVV\\
0@>>>M/0:_Mx @>.x^{n-2}>> M/N@>>> M/x^{n-2}M+N@>>>0,
\end{CD}\]
where $p$ is the natural projection. The above diagram derives the following commutative diagram
\[\begin{CD}
\cdots\longrightarrow H_I^i(M/0:_Mx) @>\psi_i>>   H_I^i(M/N) @>>> H_I^i(M/x^nM+N)\longrightarrow\cdots\\
@V.x^2VV @|@VVV\\
\cdots\longrightarrow H_I^i(M/0:_Mx) @>\varphi_i>> H_I^i(M/N) @>>> H_I^i(M/x^{n-2}M+N)\longrightarrow\cdots,
\end{CD}\]
where $\psi_i, \varphi_i$ are homomorphisms derived from the homomorphisms $M/0:_Mx \stackrel{.x^n}{\longrightarrow} M/N$ and $M/0:_Mx  \stackrel{.x^{n-2}}{\longrightarrow} M/N$ respectively. If $n\geq 3$ then $\psi_i=\varphi_i\circ(.x^2)=0$. So if $n\geq 5$ then $\psi_i=0$ and $\varphi_i=0$ for all $i<k$. We obtain a commutative diagram with exact horizontal lines
\[\begin{CD}
0\longrightarrow  H_I^i(M/N) @>f_1>> H_I^i(M/x^nM+N)@>g_1>> H_I^{i+1}(M/0:_Mx) \longrightarrow 0\\
@|@VVhV@VV.x^2V \\
0\longrightarrow  H_I^i(M/N) @>f_2>> H_I^i(M/x^{n-2}M+N)@>g_2>> H_I^{i+1}(M/0:_Mx)\longrightarrow 0.
\end{CD}\]
Note that $g_2\circ h=(.x^2)\circ g_1=0$, thus $\im(h)\subseteq \im(f_2)$. We then get a homomorphism
$$f_2^{-1}\circ h: H^i_I(M/x^nM+N)\longrightarrow H^i_I(M/N),$$
which satisfies $f_2^{-1}\circ h\circ f_1=\mathrm{id}_{H^i_I(M/N)}$. Therefore, the following short exact sequence is splitting
$$0\longrightarrow  H_I^i(M/N) \longrightarrow H_I^i(M/x^nM+N)\longrightarrow H_I^{i+1}(M/0:_Mx) \longrightarrow 0,$$
for all $i<k$ and $n\geq 5$.
\end{proof}
\begin{corollary}\label{co13}
Let $x_1, \ldots, x_s$ be a dd-sequence on $M$. There is a splitting short exact sequence
$$0\longrightarrow  H_{(x_1, \ldots, x_s)}^i(M) \longrightarrow H_{(x_1, \ldots, x_s)}^i(M/x_j^nM)\longrightarrow H_{(x_1, \ldots, x_s)}^{i+1}(M/0:_Mx_j) \longrightarrow 0,$$
for $0\leq i<j-1<s$ and $n\geq 5$. In particular, if in addition $s=d$ and $x_1,\ldots, x_d$ is a system of parameter of $M$ then there is a splitting short exact sequence
$$0\longrightarrow  H_\fm^i(M) \longrightarrow H_\fm^i(M/x_j^nM)\longrightarrow H_\fm^{i+1}(M/0:_Mx_j) \longrightarrow 0,$$
for all $0\leq i<j-1<d$ and $n\geq 5$.
\end{corollary}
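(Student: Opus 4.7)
The plan is to apply Theorem \ref{thm12} directly, taking $I=(x_1,\ldots,x_s)$, $x=x_j$, $N=0$, and $k=j-1$. Two hypotheses must be verified: (i) $0:_M x_j = 0:_M x_j^2$, and (ii) $x_j H^i_I(M)=0$ for every $i\leq j-1$. Once both are in place, the conclusion of Theorem \ref{thm12} (with splitting for all $i<k=j-1$ and $n\geq 5$) matches the corollary verbatim.

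For (i), I would invoke Lemma \ref{lem1}(i), which asserts that $x_j$ by itself is still a dd-sequence on $M$. Specialising the definition of a d-sequence to a single element (the only allowed indices are $i=0$, $j=1$) yields precisely $0:_M x_j = 0:_M x_j \cdot x_j = 0:_M x_j^2$.

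For (ii), I would split into the ranges $0\leq i\leq j-2$ and $i=j-1$, and in both cases exploit Lemma \ref{pr8}. When $i\leq j-2$, Lemma \ref{pr8} identifies $H^i_I(M)\simeq H^i_{(x_1,\ldots,x_{i+1})}(M)$, and since $j\geq i+2$ Lemma \ref{lem2}(i) guarantees that $x_j$ (lying in $(x_{i+2},\ldots,x_s)$) annihilates this module. For the remaining borderline case $i=j-1$, the second isomorphism in Lemma \ref{pr8} realises $H^{j-1}_I(M)$ as the $x_j$-torsion submodule $(0:x_j)_{H^{j-1}_{(x_1,\ldots,x_{j-1})}(M)}$, which is annihilated by $x_j$ by construction. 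Thus the hypotheses of Theorem \ref{thm12} hold with $k=j-1$, and the theorem yields the splitting short exact sequence in the stated range.

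For the second statement ($s=d$, $x_1,\ldots,x_d$ a system of parameters on $M$), I would observe that $(x_1,\ldots,x_d)+\Ann_R M$ is $\fm$-primary. Since each of $M$, $M/x_j^n M$, and $M/0:_M x_j$ is annihilated by $\Ann_R M$, local cohomology with support $(x_1,\ldots,x_d)$ agrees with local cohomology with support $\fm$ on all three modules, so the splitting sequence from the first part rewrites verbatim with $H^\bullet_\fm(-)$. I do not anticipate a serious obstacle: the only delicate point is pushing the annihilation hypothesis all the way up to the borderline index $i=j-1$, which is exactly where the $x_j$-torsion description in Lemma \ref{pr8} does the work.
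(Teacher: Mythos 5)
Your proposal is correct and follows the same overall route as the paper: identify $I=(x_1,\ldots,x_s)$, $x=x_j$, $N=0$, and feed these into Theorem~\ref{thm12}. In fact your verification is more careful than the paper's own proof at exactly the two points that matter. The paper's proof invokes only the first isomorphism $H^i_{(x_1,\ldots,x_s)}(M)\simeq H^i_{(x_1,\ldots,x_{i+1})}(M)$ from Lemma~\ref{pr8} together with Lemma~\ref{lem2}(i) to conclude $x_jH^i_I(M)=0$ for $j>i+1$, that is, for $i\le j-2$ only; with the hypothesis of Theorem~\ref{thm12} stated as ``$xH^i_I(M)=0$ for all $i\le k$'' and the conclusion stated for $i<k$, that would naively give the splitting only for $i<j-2$, one index short of the claim. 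You close that gap by using the \emph{second} isomorphism in Lemma~\ref{pr8}, $H^{j-1}_I(M)\simeq (0:x_j)_{H^{j-1}_{(x_1,\ldots,x_{j-1})}(M)}$, which is visibly $x_j$-torsion, so that $x_jH^i_I(M)=0$ holds for every $i\le j-1$ and $k=j-1$ is a legitimate choice. You also explicitly verify the other hypothesis $0:_Mx_j=0:_Mx_j^2$ from the single-element d-sequence condition, which the paper silently elides. Finally, your reduction of the second statement to the first, by replacing the support ideal $(x_1,\ldots,x_d)$ with $\fm$ because their radicals agree modulo $\Ann_RM$ and all three modules involved are $\Ann_RM$-torsion, is exactly what is needed. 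No gaps; if anything your write-up is more complete than the original.
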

\begin{proof}
We have $H_{(x_1, \ldots, x_s)}^i(M)\simeq H_{(x_1, \ldots, x_{i+1})}^i(M)$ from Lemma \ref{pr8}. Following Lemma \ref{lem2}($i$), we have $x_jH_{(x_1, \ldots, x_s)}^i(M)=0$ for all $j> i+1$. The conclusion then follows from Theorem \ref{thm12}.
\end{proof}
Recall that $\fa_i(M):=\Ann_R H^i_\fm(M)$ and $\fa(M)=\fa_0(M)\ldots \fa_{d-1}(M)$, $d=\dim M$. The next lemma is a simple but very useful property of the ideal $\fa(M)$.
\begin{lemma}\label{am}
Let $M$ be a finitely generated $R$-module. For any submodule $N\subset M$ such that $\dim N<\dim M$, $\fa(M)\subseteq \Ann_R N$. In particular, $\fa(M)\subseteq \fp$ for all $\fp\in \Ass(M)$ with $\dim R/\fp<\dim M$.
\end{lemma}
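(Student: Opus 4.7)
The plan is to exploit the dimension filtration of $M$ and to reduce the problem, via a stepwise argument, to annihilating the top local cohomology of an unmixed module. The "in particular" assertion follows immediately from the main statement: for $\fp \in \Ass(M)$ with $\dim R/\fp < d := \dim M$, the embedding $R/\fp \hookrightarrow M$ exhibits $R/\fp$ as a submodule of dimension strictly less than $d$, so the main statement yields $\fa(M) \subseteq \Ann_R(R/\fp) = \fp$.

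For the main statement, denote by $D_i = D_i(M)$ the largest submodule of $M$ of dimension $\leq i$ (which exists by Noetherianity). This yields a chain $0 = D_{-1} \subseteq D_0 \subseteq \cdots \subseteq D_{d-1} \subseteq D_d = M$. Since $\dim N < d$ forces $N \subseteq D_{d-1}$, it suffices to prove $\fa(M) \cdot D_{d-1} = 0$. I propose to establish the stronger statement
\[
\fa_0(M) \fa_1(M) \cdots \fa_i(M) \cdot D_i(M) = 0 \quad \text{for } 0 \leq i \leq d - 1
\]
by induction on $i$; the case $i = d - 1$ then yields the lemma. The base case $i = 0$ is immediate, since $D_0(M) = H^0_\fm(M)$ and $\fa_0(M)$ annihilates $H^0_\fm(M)$ by definition.

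For the inductive step, it suffices to prove the refined inclusion $\fa_i(M) \cdot D_i(M) \subseteq D_{i-1}(M)$. I reduce to the situation $D_{i-1}(M) = 0$ by passing to $\bar M := M/D_{i-1}(M)$: from the long exact sequence of $0 \to D_{i-1} \to M \to \bar M \to 0$ together with the vanishing $H^j_\fm(D_{i-1}) = 0$ for $j \geq i$ (which follows from $\dim D_{i-1} \leq i - 1$), we obtain $H^i_\fm(M) \cong H^i_\fm(\bar M)$, so that $\fa_i(M) = \fa_i(\bar M)$. In $\bar M$ we have $D_{i-1}(\bar M) = 0$, so $\bar D := D_i(\bar M) = D_i(M)/D_{i-1}(M)$ is unmixed of dimension $i$, and the task reduces to showing $\fa_i(\bar M) \cdot \bar D = 0$.

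The key technical input is the identity $\Ann_R H^i_\fm(\bar D) = \Ann_R \bar D$, valid for a finitely generated unmixed module $\bar D$ of dimension $i$. This can be obtained by analyzing a primary decomposition of the zero submodule in $\bar D$, reducing to the case in which $\bar D$ is $\fp$-coprimary of dimension $\dim R/\fp = i$, where the identity is standard. Combining this with the long exact sequence of $0 \to \bar D \to \bar M \to \bar M/\bar D \to 0$ and the vanishing $H^{i+1}_\fm(\bar D) = 0$ allows one to conclude $\fa_i(\bar M) \subseteq \Ann_R H^i_\fm(\bar D) = \Ann_R \bar D$. The main obstacle is precisely this annihilator identity together with the careful tracking of annihilators through the long exact sequence; in particular, one must control the kernel of the natural map $H^i_\fm(\bar D) \to H^i_\fm(\bar M)$, which is feasible because $\bar M / \bar D$ has no submodule of dimension $\leq i$, so every nonzero submodule of $\bar M / \bar D$ has dimension $\geq i + 1$.
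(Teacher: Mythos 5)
Your approach via the dimension filtration $D_0 \subseteq \cdots \subseteq D_d = M$ is genuinely different from the paper's, which is a one-line application of a result of Schenzel: since $\dim N < \dim M$, prime avoidance produces a parameter element $x$ of $M$ with $x \in \Ann_R N$, so $N \subseteq 0:_M x$, and Schenzel's Satz 2.4.5 gives $\fa(M) \subseteq \fb(M) \subseteq \Ann_R(0:_M x) \subseteq \Ann_R N$. Your reduction to showing $\fa_i(\bar M)\cdot \bar D = 0$ is a reasonable plan, but there are two problems with the execution, the second of which is a genuine gap.

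First, the ``key technical input'' $\Ann_R H^i_\fm(\bar D) = \Ann_R \bar D$ for an unmixed module $\bar D$ of dimension $i$ is not standard in the elementary sense you suggest. The primary-decomposition reduction to the $\fp$-coprimary case, combined with a parameter-element argument, only yields $\Ann_R H^i_\fm(\bar D) \subseteq \fp$ for each $\fp \in \Ass(\bar D)$, hence $\Ann_R H^i_\fm(\bar D) \subseteq \sqrt{\Ann_R \bar D}$; the exact (non-radical) equality is a theorem of Bahmanpour, Naghipour and Sedghi and requires real work. Since the lemma asserts $\fa(M) \subseteq \Ann_R N$ on the nose, the radical version does not suffice.

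Second, and more seriously, to deduce $\fa_i(\bar M) \subseteq \Ann_R H^i_\fm(\bar D)$ from the sequence $0 \to \bar D \to \bar M \to \bar M/\bar D \to 0$ you need the map $H^i_\fm(\bar D) \to H^i_\fm(\bar M)$ to be injective (or at least that its kernel is killed by $\fa_i(\bar M)$). Its kernel is the image of the connecting map $H^{i-1}_\fm(\bar M/\bar D) \to H^i_\fm(\bar D)$, and the hypothesis you invoke --- that $\bar M/\bar D$ has no nonzero submodule of dimension $\leq i$ --- controls $H^0_\fm(\bar M/\bar D)$ but says nothing about $H^{i-1}_\fm(\bar M/\bar D)$ for $i \geq 2$. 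For instance, $\bar M/\bar D = M/D_i(M)$ can have depth $1$ while having dimension $d > i$, in which case $H^{i-1}_\fm(\bar M/\bar D) \neq 0$. The vanishing $H^{i+1}_\fm(\bar D) = 0$ that you do use only gives surjectivity on the other end of the long exact sequence and does not help here. Absent a further argument that the connecting map vanishes, the chain of inclusions breaks at this step.
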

\begin{proof}
Since $\dim N<\dim M$, there is a parameter element $x$ of $M$ such that $x\in \Ann_RN$. Hence, $N\subseteq 0:_Mx$. Note that by a result of Schenzel \cite[Satz 2.4.5]{sch}, $\fa(M) \subseteq \Ann_R(0:_Mx)$, this implies the first conclusion.

Let $\fp\in \Ass(M)$ with $\dim R/\fp<\dim M$. There is a submodule $N\subset M$ such that $\Ass(N)=\{\fp\}$. Thus $\dim N=\dim R/\fp<\dim M$ and by the first conclusion, $\fa(M)\subseteq \Ann_R N\subseteq \fp$.
\end{proof}
We end this section by the following corollary of Theorem \ref{thm12}.
\begin{corollary}\label{co13prpr}
Let $x, y\in \fa(M)$ be two parameter elements of $M$. For any $n, m\geq 5$ and $i=0, 1, \ldots, d-2$ we have 
$$H^i_\fm(M/x^nM)\simeq H^i_\fm(M/y^mM).$$
\end{corollary}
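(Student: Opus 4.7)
The plan is to apply Theorem \ref{thm12} twice, once with the parameter element $x$ and once with $y$, take $N=0$ in both applications, and then reconcile the two resulting direct sum decompositions. To set up the first application I need to verify, for $x \in \fa(M)$ a parameter element of $M$, the two hypotheses of Theorem \ref{thm12}: (i) $x H^i_\fm(M) = 0$ for all $i \leq d-1$, and (ii) $0 :_M x = 0 :_M x^2$. The first is immediate because $\fa(M) = \fa_0(M) \cdots \fa_{d-1}(M) \subseteq \fa_i(M)$ for every $i \leq d-1$. The second I would deduce from Lemma \ref{am}: if $m \in 0 :_M x^2$ then $\Ann_R(Rm) \ni x^2$, hence every $\fp \in \Ass(Rm) \subseteq \Ass(M)$ contains $x$; since $x$ is a parameter of $M$ it lies in no $\fp \in \Ass(M)$ with $\dim R/\fp = d$, so $\dim Rm < d$, and therefore $\fa(M) \subseteq \Ann_R(Rm)$ by Lemma \ref{am}, whence $xm=0$.

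With those inputs, Theorem \ref{thm12} applied to $x$ (with $I = \fm$, $N = 0$, $k = d-1$) gives, for every $0 \leq i \leq d-2$ and every $n \geq 5$, a splitting
\[
H^i_\fm(M/x^n M) \;\simeq\; H^i_\fm(M) \,\oplus\, H^{i+1}_\fm(M/0:_M x),
\]
and the same hypotheses hold for $y$ with $m \geq 5$, yielding
\[
H^i_\fm(M/y^m M) \;\simeq\; H^i_\fm(M) \,\oplus\, H^{i+1}_\fm(M/0:_M y).
\]

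It then remains to identify $H^{i+1}_\fm(M/0 :_M x)$ with $H^{i+1}_\fm(M/0 :_M y)$, and for this it suffices to prove the stronger assertion $0 :_M x = 0 :_M y$. I would argue this symmetrically by the same device used for (ii): if $m \in 0 :_M x$, then every $\fp \in \Ass(Rm)$ contains $x$, so $\dim Rm < d$ because $x$ is a parameter; Lemma \ref{am} then yields $\fa(M) \subseteq \Ann_R(Rm)$, and since $y \in \fa(M)$ we get $ym = 0$, i.e., $m \in 0:_M y$. The reverse inclusion is identical. Substituting the common value $0 :_M x = 0 :_M y$ into the two direct sum decompositions produces the desired isomorphism $H^i_\fm(M/x^n M) \simeq H^i_\fm(M/y^m M)$.

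I do not anticipate a genuine obstacle here: the whole argument is a bookkeeping exercise around Theorem \ref{thm12} and Lemma \ref{am}. The one place that requires a moment of care is making sure both hypotheses of Theorem \ref{thm12} really follow from just ``$x$ is a parameter element lying in $\fa(M)$'', and that the identification $0 :_M x = 0 :_M y$ is genuinely independent of the chosen parameter in $\fa(M)$ — both reduce to the same observation, namely that any element of $M$ annihilated by a parameter in $\fa(M)$ generates a submodule of dimension strictly less than $d$, which Lemma \ref{am} then forces to be annihilated by all of $\fa(M)$.
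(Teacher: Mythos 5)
Your proposal is correct and follows essentially the same route as the paper: apply Theorem \ref{thm12} with $N=0$, $I=\fm$, $k=d-1$ to get the two direct sum decompositions, and then use Lemma \ref{am} (via the observation that any submodule killed by a parameter in $\fa(M)$ has dimension $<d$) to conclude $0:_Mx=0:_Mx^2$ and $0:_Mx=0:_My$. The paper states the annihilator facts a bit more tersely (directly on $0:_Mx$ rather than on cyclic submodules $Rm$), but the underlying argument is identical.
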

\begin{proof}
Since $x, y\in \fa(M)$, we have $x\in \Ann_R (0:_My)$ and $y\in \Ann_R (0:_Mx)$ by Lemma \ref{am}. Thus  $0:_My= 0:_Mx$, in particular, $0:_Mx^t= 0:_Mx$ and $0:_My^t= 0:_My$ for any $t>0$. Using Theorem \ref{thm12} we obtain for $n, m\geq 5$, $i=0, 1, \ldots, d-2$,
\begin{multline*}
H^i_\fm(M/x^nM)\simeq H^i_\fm(M)\oplus H^{i+1}_\fm(M/0:_Mx)\\
\simeq H^i_\fm(M)\oplus H^{i+1}_\fm(M/0:_My)\simeq H^i_\fm(M/y^mM).
\end{multline*}
\end{proof}


\section{Localization} 
\label{section3}

In general a finitely generated module over a local ring does not necessarily have a p-standard system of parameters. A typical example is the two dimensional local domain given by Ferrand-Raynaud \cite[Proposition 3.3]{fr}. There are only some sufficient conditions for the existence of such a system of parameters, for examples, when the ground ring admits a dualizing complex (cf. \cite{ntc}). Other examples are Cohen-Macaulay modules and generalized Cohen-Macaulay modules, namely, regular and standard systems of parameters respectively. Recently, the authors have shown that this is also the case of sequentially Cohen-Macaulay modules and sequentially generalized Cohen-Macaulay modules. In this section, we will study the existence of p-standard system of parameters when passing to localization, this is a key for the proofs of Theorems \ref{main1} and \ref{main11} in next sections. To do this, we need first some lemmas.
\begin{lemma}\label{lem3.1}
Let $M$ be a finitely generated $R$-module of dimension $d$ and let $x$ be a parameter element of $M$. Then $\fa^{d-1}(M)\subseteq \fa(M/xM)$.
\end{lemma}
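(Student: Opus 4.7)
The plan is to bound the annihilator of each local cohomology module $H^i_{\fm}(M/xM)$ for $0\le i\le d-2$ separately, then multiply these bounds to produce the product ideal $\fa(M/xM)=\prod_{i=0}^{d-2}\fa_i(M/xM)$.

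First I would apply the long exact sequence of local cohomology associated with the short exact sequence
\[
0\longrightarrow M/(0:_M x)\xrightarrow{\ \cdot x\ } M\longrightarrow M/xM\longrightarrow 0.
\]
For each $0\le i\le d-2$, this produces a short exact sequence
\[
0\longrightarrow A_i\longrightarrow H^i_{\fm}(M/xM)\longrightarrow B_i\longrightarrow 0,
\]
where $A_i$ is a quotient of $H^i_{\fm}(M)$, hence annihilated by $\fa_i(M)$, and $B_i$ is a submodule of $H^{i+1}_{\fm}(M/(0:_M x))$.

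Since $x$ is a parameter element of $M$, $\dim(0:_M x)<d$, so Lemma \ref{am} gives $\fa(M)\subseteq\Ann_R(0:_M x)$; consequently $\fa(M)$ annihilates every $H^j_{\fm}(0:_M x)$. Feeding this into the long exact sequence arising from
\[
0\longrightarrow 0:_M x\longrightarrow M\longrightarrow M/(0:_M x)\longrightarrow 0,
\]
I would squeeze $H^{i+1}_{\fm}(M/(0:_M x))$ between a quotient of $H^{i+1}_{\fm}(M)$ (annihilated by $\fa_{i+1}(M)$) and a submodule of $H^{i+2}_{\fm}(0:_M x)$ (annihilated by $\fa(M)$). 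This yields
\[
\fa_i(M)\,\fa_{i+1}(M)\,\fa(M)\subseteq \fa_i(M/xM),
\]
and at the boundary $i=d-2$ this sharpens to $\fa_{d-2}(M)\,\fa_{d-1}(M)\subseteq \fa_{d-2}(M/xM)$ since $H^d_{\fm}(0:_M x)=0$.

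The final step multiplies these bounds over $i=0,\dots,d-2$ and exploits the containment $\fa(M)\subseteq\fa_j(M)$, valid for every $j$, to recognise that the accumulated product contains $(\fa(M))^{d-1}$.

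The hard part is precisely this final telescoping of ideal products. Each individual bound carries an ``extra'' factor of $\fa(M)$ coming from the $(0:_M x)$ contribution, and a naive multiplication overshoots, producing only a strictly higher power of $\fa(M)$. Balancing the excess requires the sharper estimate at $i=d-2$ together with repeated use of $\fa(M)\subseteq\fa_j(M)$ to redistribute factors across the product so that exactly $d-1$ copies of $\fa(M)$ are extracted from $\prod_{i=0}^{d-2}\fa_i(M/xM)$, giving $(\fa(M))^{d-1}\subseteq\fa(M/xM)$.
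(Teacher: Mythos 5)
Your setup is correct: the two long exact sequences do give, for each $0\le i\le d-2$, the bound
\[
\fa_i(M)\,\fa_{i+1}(M)\,\fa(M)\ \subseteq\ \fa_i(M/xM),
\]
with the cleaner $\fa_{d-2}(M)\,\fa_{d-1}(M)\subseteq\fa_{d-2}(M/xM)$ at the top. But the final ``telescoping'' step cannot be made to work, and the gap is not a matter of bookkeeping --- it is a direction-of-containment problem. Multiplying your $d-1$ bounds yields
\[
\fa(M/xM)\ \supseteq\ \prod_{i=0}^{d-2}\fa_i(M)\,\fa_{i+1}(M)\,\fa(M),
\]
and the right-hand side, being a product of \emph{strictly more} ideals than $d-1$ copies of $\fa(M)$, is \emph{contained in} $\fa(M)^{d-1}$, not the other way around. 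The containment $\fa(M)\subseteq\fa_j(M)$ only lets you replace a factor $\fa_j(M)$ by the smaller ideal $\fa(M)$, which shrinks the product further; there is no way to ``extract'' factors from a product of ideals so as to enlarge it. So what you actually prove is $\fa(M/xM)\supseteq\fa(M)^{d-1}\cdot(\text{extra ideals})$, which says nothing about $\fa(M)^{d-1}\subseteq\fa(M/xM)$.

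What is missing is an ingredient that yields, for a \emph{single} ideal $\mathfrak c\supseteq\fa(M)$, the containment $\mathfrak c\subseteq\fa_i(M/xM)$ for every $i\le d-2$ simultaneously, so that multiplying gives exactly $\mathfrak c^{\,d-1}\subseteq\fa(M/xM)$. The paper does this via Schenzel's ideal $\fb(M)=\bigcap\bigcap_{i=1}^{d}\Ann\,(0:x_i)_{M/(x_1,\ldots,x_{i-1})M}$ (intersection over all systems of parameters), together with Schenzel's theorem $\fa(M)\subseteq\fb(M)\subseteq\bigcap_i\fa_i(M)$. The key advantages of $\fb$ are: (i) $\fb(M)$ sits inside \emph{each} $\fa_i(M)$, not merely in a product; and (ii) $\fb(M)\subseteq\fb(M/xM)$ is immediate from the definition, since any system of parameters of $M/xM$ can be prefixed by $x$ to give one for $M$. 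The chain $\fa(M)^{d-1}\subseteq\fb(M)^{d-1}\subseteq\fb(M/xM)^{d-1}\subseteq\fa(M/xM)$ then finishes the proof. Long exact sequences alone, as you observed, only bound $\fa_i(M/xM)$ by a product involving several $\fa_j(M)$'s, which is too weak; Schenzel's $\fb$-ideal is the piece of structure that sharpens each such bound to the intersection, and that is precisely where the colon-ideal / $d$-sequence theory (rather than homological algebra) enters.
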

\begin{proof}
Set $\fb(M)=\bigcap_{i=1}^d\Ann (0:x_i)_{M/(x_1, \ldots, x_{i-1})M}$ where $x_1, \ldots, x_d$ runs over the set of all systems of parameters of $M$. By Schenzel \cite[Satz 2.4.5]{sch}, we have
$$\fa(M)\subseteq \fb(M)\subseteq \fa_0(M)\cap \ldots \cap \fa_{d-1}(M).$$
Hence, $\fa^{d-1}(M)\subseteq \fb^{d-1}(M)\subseteq \fb^{d-1}(M/xM)\subseteq\fa(M/xM)$.
\end{proof}
\begin{lemma}\label{lem3.2}
Let $M$ be a finitely generated $R$-module. Assume that $M$ admits a p-standard system of parameters $x_1, \ldots, x_d$. Let $y_d\in \fa(M)$ be a parameter element of $M$. There are $y_1, \ldots, y_{d-1}\in \fm$ such that $y_1^n, \ldots, y_d^n$ is a p-standard system of parameters of $M$ for all $n\gg 0$.
\end{lemma}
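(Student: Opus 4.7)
My plan is to prove this by induction on $d = \dim M$. The base case $d=1$ is immediate, since the hypothesis $y_d = y_1 \in \fa(M)$ already means the singleton $y_1^n$ is a p-standard system of parameters for every $n\geq 1$.

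For the inductive step $d \geq 2$, the heart of the argument is to transfer the p-standard property from the $x_d$-side to the $y_d$-side. First I would invoke Corollary \ref{co13prpr}: because both $x_d$ and $y_d$ lie in $\fa(M)$ and are parameter elements of $M$, for every $n \geq 5$ one has $H^i_\fm(M/x_d^nM) \simeq H^i_\fm(M/y_d^nM)$ for $0 \leq i \leq d-2$, hence $\fa(M/x_d^nM) = \fa(M/y_d^nM)$. Next, Lemma \ref{lem0} tells me the given p-standard sop $x_1, \ldots, x_d$ is a dd-sequence, and for $n \geq d$ the powers $x_1^n, \ldots, x_d^n$ are again a p-standard sop of $M$; in particular $x_{d-1}^n \in \fa(M/x_d^nM) = \fa(M/y_d^nM)$ for all $n \geq \max(d,5)$. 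In general $x_{d-1}$ together with $y_d$ is not a partial system of parameters, so I would perturb: set $y_{d-1} := x_{d-1} + \mu$ with $\mu \in \fa(M)^{d-1}$ chosen by prime avoidance so that $y_{d-1}$ is a parameter of $M/y_dM$. Lemma \ref{lem3.1} gives $\fa(M)^{d-1} \subseteq \fa(M/y_d^nM)$, so binomial expansion of $(x_{d-1} + \mu)^n$ shows $y_{d-1}^n \in \fa(M/y_d^nM)$ for $n$ large.

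With $y_d$ and $y_{d-1}$ fixed, I would then pass to the module $\bar M := M/y_d^n M$ of dimension $d-1$ and repeat: the analogous construction produces a p-standard sop on $\bar M$ and $y_{d-1}$ is, by construction, a parameter of $\bar M$ lying in $\fa(\bar M)$, which sets up the inductive step in dimension $d-1$ and yields $y_{d-2}, \ldots, y_1$. A final appeal to Lemma \ref{lem0} applied to the resulting dd-sequence $y_1, \ldots, y_d$ shows that $y_1^n, \ldots, y_d^n$ is a p-standard system of parameters of $M$ for all $n \gg 0$.

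The hard part will be the prime-avoidance step at each level. One must know that $\fa(M)$ is not contained in any minimal prime of maximal dimension of $M/y_d M$, and more generally that the relevant power of $\fa(M)$ escapes the minimal primes of maximal dimension of the successive quotients $M/(y_{j+1}^n, \ldots, y_d^n)M$. This is precisely where the existence of a p-standard sop on $M$ is indispensable: it forces $\fa(M)$ to contain the parameter $y_d$ (and $x_d$), and combined with Lemma \ref{am} constraining which associated primes can contain $\fa$, it guarantees that enough elements in $\fa(M)^{d-1}$, $\fa(M)^{(d-1)(d-2)}$, etc., remain outside the finitely many bad primes at each stage so that the perturbation of $x_j$ can be carried out.
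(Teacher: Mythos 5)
Your outline shares the paper's main tools (Corollary \ref{co13prpr}, Lemma \ref{lem3.1}, Lemma \ref{lem0}, prime avoidance), and the first iteration of your construction is sound: the binomial expansion argument correctly shows $y_{d-1}^n=(x_{d-1}+\mu)^n\in\fa(M/y_d^nM)$ once $\mu\in\fa(M)^{d-1}$ and $x_{d-1}^n\in\fa(M/x_d^nM)=\fa(M/y_d^nM)$. But the proof has a genuine gap at the inductive step, and a secondary flaw in the prime-avoidance reasoning.

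The gap: you pass to $\bar M=M/y_d^nM$ and assert that ``the analogous construction produces a p-standard sop on $\bar M$,'' so that the inductive hypothesis applies. This is circular. That $\bar M$ admits a p-standard system of parameters is essentially the content of what you are trying to prove (the Lemma's conclusion that $y_1^n,\ldots,y_d^n$ is p-standard on $M$ already entails that $y_1^n,\ldots,y_{d-1}^n$ is p-standard on $\bar M$). The obvious candidate sop of $\bar M$, namely the images of $x_1,\ldots,x_{d-1}$, need not even be a system of parameters of $\bar M$ -- they are parameters of the \emph{different} module $M/x_d^nM$ -- and perturbing them to avoid the bad primes of $\bar M$ does not come with the $\fa$-ideal containments for free. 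The paper sidesteps this by never factoring out $y_d^n$: it keeps $x_d^n$ as the ``scaffold,'' inducting on $M/x_d^nM$ (which does provably carry the p-standard sop $x_1,\ldots,x_{d-1}$, up to powers) while maintaining the interleaved invariants that $y_{k+1},\ldots,y_i,x_{i+1},\ldots,x_d$ remains a partial sop and $y_i\in\fa(M/(y_{i+1}^n,\ldots,y_d^n)M)$ for each $i$, then using the isomorphisms of Corollary \ref{co13prpr} to transport each $\fa$-ideal condition from the $x_d$-side to the $y_d$-side.

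Secondary issue: your claim that ``one must know that $\fa(M)$ is not contained in any minimal prime of maximal dimension of $M/y_dM$'' is not something one can know -- it is false in general. Any $\fp\in\Ass(M)$ with $\dim R/\fp=d-1$ satisfies $\fa(M)\subseteq\fp$ by Lemma \ref{am}, and such a $\fp$ is a minimal prime of $M/y_dM$ of dimension $d-1$. The perturbation can nonetheless be carried out, but for a different reason than you gave: any such $\fp$ contains $x_d$ as well, hence is also a minimal prime of $M/x_dM$ of dimension $d-1$, so $x_{d-1}\notin\fp$ (it is a parameter of $M/x_dM$) and no perturbation is needed for these primes; $\mu$ only needs to escape the remaining minimal primes, which do not contain $\fa(M)$. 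Your argument would need this case split to go through.
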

\begin{proof}
Take $n\geq 5. d!$. We choose $y_k$ by induction on $r=d-k$. The case $r=0$ is trivial. Assume that $r>0$ and we have chosen $y_{k+1}, \ldots, y_d$ such that
\begin{enumerate}[(i)]
\item $y_{k+1}, \ldots, y_i, x_{i+1}, \ldots, x_d$ is  a part of a system of parameters for $i=k+1, \ldots, d$.

\item $y_i\in \fa(M/(y_{i+1}^n, \ldots, y_d^n)M)$ for $i=k+1, \ldots, d$.
\end{enumerate}

\noindent From Lemma \ref{lem3.1}, $x_d^n, y_d^n\in \fa(M/(y_i^n, \ldots, y_{d-1}^n)M)$, $i=k+1, \ldots, d-1$. This implies by Corollary \ref{co13prpr} that
$$H^j_\fm(M/(y_i^n, \ldots, y_{d-1}^n, y_d^n)M)\simeq H^j_\fm(M/(y_i^n, \ldots, y_{d-1}^n, x_d^n)M),$$
for $j=0, 1, \ldots, i-2$. The module $M/x_d^nM$ admits a p-standard system of parameters $x_1, \ldots, x_{d-1}$ and $y_{k+1}, \ldots, y_{d-1}$ is a part of a system of parameters of it such that 
$$y_i\in \fa(M/(y_{i+1}^n, \ldots, y_{d-1}^n, x_d^n)M),$$
for $ i=k+1, \ldots, d-1$. Using the induction hypothesis, we can choose $$y_k\in \fa(M/(y_{k+1}^n, \ldots, y_{d-1}^n, x_d^n)M)=\fa(M/(y_{k+1}^n, \ldots, y_{d-1}^n, y_d^n)M),$$
such that $y_k\not\in\fp$ for all $\fp\in \Ass M/(y_{k+1}, \ldots, y_i, x_{i+1}, \ldots, x_d)M$ with $\dim R/\fp=k$, $i=k+1, \ldots, d$.
\end{proof}

In the next we will show that the existence of p-standard system of parameters is preserved when passing to localization. A technical remark is, in some cases dd-sequences have more advantage than p-standard systems of parameters, one can see this in the proofs of the previous lemmas and of the following proposition.
\begin{proposition}\label{pr5}
Let $R$ be a Noetherian local ring and $M$ be a finitely generated $R$-module. Let $\fp\in \Supp(M)$ and set $r=\dim R/\fp$. Assume in addition that $M$ admits a p-standard system of parameters. There is a p-standard system of parameters $x_1, \ldots, x_d$ of $M$ such that $x_{r+1}, \ldots, x_d\in \fp$ and if we put $s_1=\dim M_\fp, s_2=\depth M_\fp$, then

(i) $x_{r+1}, \ldots, x_{r+s_1}$ is a p-standard system of parameters of $M_\fp$.

(ii) $x_{r+1}, \ldots, x_{r+s_2}$ is a maximal regular sequence on $M_\fp$.

\noindent In particular, $M_\fp$ admits a p-standard system of parameters.
\end{proposition}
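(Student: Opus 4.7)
The plan is to build a p-standard system of parameters $x_1, \ldots, x_d$ of $M$ whose last $d-r$ entries lie in $\fp$ and whose first $r$ entries avoid $\fp$, and then to push the conclusion to $M_\fp$ using the dd-sequence interpretation of Lemma~\ref{lem0}, since the defining relations of a dd-sequence are colon identities that commute with localization.

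The construction is by downward induction on $k$, refining the proof of Lemma~\ref{lem3.2}. Set $N_k := M/(x_{k+1}^n, \ldots, x_d^n)M$ at the $k$-th stage; Lemma~\ref{lem3.2} picks $x_k \in \fa(N_k)$ avoiding finitely many primes of $N_k$ of dimension $k$, and I would add the constraint $x_k \in \fp$ if $k > r$ and $x_k \notin \fp$ if $k \leq r$. For $k > r$ any forbidden prime $\fq$ of $N_k$ satisfies $\dim R/\fq = k > r$, so $\fp \not\subseteq \fq$; combined with $\fa(N_k) \not\subseteq \fq$ (a parameter element lies in $\fa(N_k)$), prime avoidance yields an element of $\fa(N_k) \cap \fp$ outside every $\fq$. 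For $k \leq r$ one needs $\fa(N_k) \not\subseteq \fp$: using $(\Ann N_k)^{k} \subseteq \fa(N_k)$, the inclusion $\fa(N_k) \subseteq \fp$ would force $\fp \in \Supp(N_k)$ and hence $r \leq k$; this is impossible for $k < r$, while for $k = r$ the prime $\fp$ is already on the avoidance list as a maximal-dimensional minimal prime of $N_r$, hence automatically skipped by the parameter element in $\fa(N_r)$. After raising everything to a common large power, Lemma~\ref{lem0} upgrades the resulting sequence to a p-standard system of parameters of $M$ with the desired partition.

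Next I would transport the statement to $M_\fp$. By Lemma~\ref{lem0}, $x_1, \ldots, x_d$ is a dd-sequence on $M$, and its defining colon relations survive localization at $\fp$; since $x_1, \ldots, x_r$ become units in $R_\fp$, they drop out of every such relation, so $x_{r+1}, \ldots, x_d$ is a dd-sequence on $M_\fp$ and, by Lemma~\ref{lem1}(i), so is any prefix. Because $M/(x_{r+1}, \ldots, x_d)M$ has dimension $r$ as a parameter-part of $M$ and $\fp$ sits in its support as a maximal-dimensional minimal prime, the localization $M_\fp/(x_{r+1}, \ldots, x_d)M_\fp$ has finite length; hence $(x_{r+1}, \ldots, x_d)R_\fp$ is $\fp R_\fp$-primary on $M_\fp$, which forces $s_1 \leq d-r$, and the colon relations of the dd-sequence force already the length-$s_1$ prefix $x_{r+1}, \ldots, x_{r+s_1}$ to cut $M_\fp$ down to finite length, making it a genuine system of parameters of $M_\fp$. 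One more application of Lemma~\ref{lem0}, after raising to appropriate powers, turns this dd-sequence-SOP into a p-standard system of parameters of $M_\fp$, proving (i). For (ii), regularity is likewise a colon-ideal condition that localizes, and the standard fact that inside a p-standard system of parameters the initial $\depth$-many entries form a maximal regular sequence then delivers the statement.

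The main obstacle is the induction step in the construction, and in particular the boundary index $k = r$, where the $\fp$-membership switch has to be reconciled with the several avoidance conditions inherited from Lemma~\ref{lem3.2}; the computation $(\Ann N_k)^k \subseteq \fa(N_k)$ that certifies $\fa(N_k) \not\subseteq \fp$ for $k \leq r$ is the technical crux. A secondary subtlety is that when $s_1 < d-r$ the excess entries $x_{r+s_1+1}, \ldots, x_d$ must be forced into the radical of $(x_{r+1}, \ldots, x_{r+s_1})$ on $M_\fp$, which is exactly the information supplied by the dd-sequence colon relations together with the finite-length conclusion above.
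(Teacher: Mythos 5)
Your argument mirrors the paper's: both build the p-standard system of parameters by descending recursion via Lemma~\ref{lem3.2}, placing $x_{r+1},\ldots,x_d$ in $\fp$, then convert to a dd-sequence via Lemma~\ref{lem0}, localize at $\fp$ using the colon-identity characterization, and pass back to p-standard by raising to powers. The only differences are cosmetic — you add the unnecessary constraint $x_k\notin\fp$ for $k\le r$, and you compress the paper's induction-on-dimension step for showing $x_{r+1},\ldots,x_{r+s_1}$ is a system of parameters of $M_\fp$ into a looser appeal to "colon relations" — but the route and the key lemmas used are the same.
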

\begin{proof}
If $r<d$ then we choose first a parameter element $x_d\in \fp\cap\fa(M)$. Let $n>5. d!$. By Lemma \ref{lem3.2}, $M/x_d^nM$ also admits a p-standard system of parameters. If $r<d-1$ we can choose a parameters element $x_{d-1}\in \fp\cap\fa(M/x_d^nM)$ of $M/x_d^nM$. By the recursive method, we get a system of parameters $x_1,\ldots, x_d$ of $M$ such that $x_{r+1}, \ldots, x_d\in \fp$ and $x_i\in\fa(M/(x_{i+1}^n, \ldots, x_d^n)M)$, $i=1, \ldots, d$. Then $x_1^n, \ldots, x_d^n$ is a p-standard system of parameters of $M$ with $x_{r+1}, \ldots, x_d\in \fp$. Without lost of generality we assume that $x_1, \ldots, x_d$ is a p-standard system of parameters. By Lemma \ref{lem0}, $x_1, \ldots, x_d$ is a dd-sequence on $M$, hence $x_{r+1}, \ldots, x_d$ is a dd-sequence on $M$ by Lemma \ref{lem1}($i$). Passing to localization, it can be seen easily that $x_{r+1}, \ldots, x_d$ is also a dd-sequence on $M_\fp$ for any prime ideal $\fp\in\Spec(R)$. Moreover, we have $\sqrt{(x_{r+1}, \ldots, x_d)R_\fp}=\fp R_\fp$ and
$$0:_{M_\fp}x_{r+1}=\bigcup_{t=1}^\infty0:_{M_\fp}(x_{r+1}^t, \ldots, x_d^t)=H^0_{\fp R_\fp}(M_\fp).$$
So if $\dim M_\fp >0$ then $x_{r+1}$ is a parameter element of $M_\fp$. Using Lemma \ref{lem1}($ii$), $x_1, \ldots, \widehat{x_{r+1}}, \ldots, x_d$ is a dd-sequence on $M/x_{r+1}M$. Induction on the dimension of $M$ shows that $x_{r+1}, \ldots, x_{r+s_1}$ is a system of parameters of $M_\fp$ which is also a dd-sequence on $M_\fp$. Using Lemma \ref{lem0} again we see that for $t$ big enough, $x_1^t, \ldots, x_d^t$ is a p-standard system of parameters of $M$ with $x_{r+1}, \ldots, x_d\in \fp$ and $x_{r+1}^t, \ldots, x_{r+s_1}^t$ is a p-standard system of parameters of $M_\fp$.

The regularity of the sequence $x_{r+1}, \ldots, x_{r+s_2}$ on $M_\fp$ is proved similarly.
\end{proof}
\begin{corollary}\label{co6}
Keep all assumptions in the previous proposition. For $k\geq \dim R/\fp$, we have
$$\fa_k(M)R_\fp\subseteq \fa_{k-\dim R/\fp}(M_\fp).$$
\end{corollary}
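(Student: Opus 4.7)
My plan is to apply Corollary \ref{co9pr} on both $M$ and $M_\fp$ and then to transport the resulting containments between colon ideals from $M$ to $M_\fp$ by means of Krull's intersection theorem. The case $k \geq r + s_1$ is trivial (then $H^{k-r}_{\fp R_\fp}(M_\fp) = 0$ and $\fa_{k-r}(M_\fp) = R_\fp$), so I focus on $r \leq k < r + s_1$.

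First, I take the p-standard system of parameters $x_1, \ldots, x_d$ of $M$ produced by Proposition \ref{pr5}, with $x_{r+1}, \ldots, x_d \in \fp$ and $x_{r+1}, \ldots, x_{r+s_1}$ a p-standard s.o.p.\ of $M_\fp$. Crucially, I would strengthen the construction to also ensure $x_1, \ldots, x_r \notin \fp$: recursing downwards through the indices, at each step one chooses $x_i$ ($i \leq r$) from $\fa(M_i)$ with $M_i := M/(x_{i+1}^n, \ldots, x_d^n)M$; once $x_r, \ldots, x_{i+1}$ have been selected outside $\fp$, the prime $\fp$ is no longer in $\Supp M_i$, so $\Ann M_i \not\subseteq \fp$; since each factor $\fa_j(M_i) = \Ann H^j_\fm(M_i)$ contains $\Ann M_i$, neither the factors nor their product $\fa(M_i)$ lie in $\fp$, and prime avoidance yields $x_i \notin \fp$ (the base case $i = r$ is automatic because $x_r$ is a parameter element of $M_r$ and $\fp$ is a maximal-dimensional minimal prime of $M_r$). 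Since $x_{r+1}, \ldots, x_{r+s_1}$ is a parameter sequence of $M_\fp$, the ideals $(x_{r+1}, \ldots, x_d) R_\fp$ and $\fp R_\fp$ have the same support on $M_\fp$, and Corollary \ref{co9pr} applied to the sub-dd-sequence $x_{r+1}, \ldots, x_d$ on $M_\fp$ identifies
$$\fa_{k-r}(M_\fp) = \bigcap_{m_{r+1}, \ldots, m_k > 0} \Ann_{R_\fp}\!\frac{(x_{r+1}^{m_{r+1}}, \ldots, x_k^{m_k}) M_\fp : x_{k+1}}{(x_{r+1}^{m_{r+1}+1}, \ldots, x_k^{m_k+1}) M_\fp : (x_{r+1} \cdots x_k)}.$$

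Next, given $a \in \fa_k(M)$ and $m_{r+1}, \ldots, m_k > 0$, I take an arbitrary $u \in M$ together with $t \notin \fp$ satisfying $t x_{k+1} u \in (x_{r+1}^{m_{r+1}}, \ldots, x_k^{m_k}) M$ (the general form of a representative of an element of the numerator after clearing denominators). The containment persists after padding with arbitrary powers $x_1^n, \ldots, x_r^n$, so for every $n \geq 1$ Corollary \ref{co9pr} applied to $M$ with the full dd-sequence $x_1, \ldots, x_d$ produces
$$a t u (x_1 \cdots x_k) \in (x_1^{n+1}, \ldots, x_r^{n+1}, x_{r+1}^{m_{r+1}+1}, \ldots, x_k^{m_k+1}) M.$$
Letting $n \to \infty$ and invoking Krull's intersection theorem in the finitely generated $R$-module $M/(x_{r+1}^{m_{r+1}+1}, \ldots, x_k^{m_k+1}) M$ collapses the right-hand side to $(x_{r+1}^{m_{r+1}+1}, \ldots, x_k^{m_k+1}) M$.

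Finally, localising at $\fp$, the product $t \cdot x_1 \cdots x_r$ is a unit in $R_\fp$ thanks to the strengthened choice of parameters; cancelling it yields $(a/1)(u/1)(x_{r+1} \cdots x_k) \in (x_{r+1}^{m_{r+1}+1}, \ldots, x_k^{m_k+1}) M_\fp$, which is exactly the membership condition placing $a/1$ in each colon-quotient annihilator displayed above, hence $a/1 \in \fa_{k-r}(M_\fp)$. The main technical hurdle is the strengthening $x_1, \ldots, x_r \notin \fp$: without it the factor $x_1 \cdots x_r$ need not be invertible in $R_\fp$ and the final cancellation collapses.
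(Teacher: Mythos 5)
Your proof is correct and follows essentially the same route as the paper: Corollary~\ref{co9pr} applied to $M$, Krull's intersection theorem to eliminate the powers of $x_1,\dots,x_r$, then localization at $\fp$ and Corollary~\ref{co9pr} applied to $M_\fp$. The only remark worth making is that the ``strengthening'' you construct to force $x_1,\dots,x_r\notin\fp$ is superfluous: for \emph{any} system of parameters $x_1,\dots,x_d$ of $M$ with $x_{r+1},\dots,x_d\in\fp$ and $\dim R/\fp=r$, one automatically has $x_1,\dots,x_r\notin\fp$, since $x_i\in\fp$ for some $i\le r$ would put $\fp$ over $(x_i,x_{r+1},\dots,x_d)+\Ann_R M$, forcing $\dim R/\fp\le r-1$; this is the fact the paper uses tacitly.
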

\begin{proof}
Set $r=\dim R/\fp$. We have shown in Proposition \ref{pr5} the existence of a p-standard system of parameters $x_1, \ldots, x_d$ of $M$ such that $x_{r+1}, \ldots, x_d\in \fp$. By Corollary \ref{co9pr}, 
$$\fa_k(M)[(x_1^{n_1}, \ldots, x_k^{n_k})M:x_{k+1}]\subseteq (x_1^{n_1+1}, \ldots, x_k^{n_k+1})M:(x_1\ldots x_k),$$
for $k=0,1, \ldots, d$ and $x_{d+1}=0$. Applying Krull's Intersection Theorem we get
\[\begin{aligned}
\fa_k(M)[(x_{r+1}^{n_{r+1}}, \ldots, x_k^{n_k})M:x_{k+1}]
&=\bigcap_{n_1, \ldots, n_r}\fa_k(M)[(x_1^{n_1}, \ldots, x_k^{n_k})M:x_{k+1}]\\
&\subseteq \bigcap_{n_1, \ldots, n_r}(x_1^{n_1+1}, \ldots, x_k^{n_k+1})M:(x_1\ldots x_k)\\
&=(x_{r+1}^{n_{r+1}+1}, \ldots, x_k^{n_k+1})M:(x_1\ldots x_k).
\end{aligned}\]
Since $x_1, \ldots, x_r\not\in\fp$, the above inclusion becomes
$$\fa_k(M)R_\fp[(x_{r+1}^{n_{r+1}}, \ldots, x_k^{n_k})M_\fp:x_{k+1}]
\subseteq (x_{r+1}^{n_{r+1}+1}, \ldots, x_k^{n_k+1})M_\fp:(x_{r+1}\ldots x_k).$$
It should be noted that $x_{r+1}, \ldots, x_d$ is a dd-sequence on $M_\fp$ and $\sqrt{(x_{r+1}, \ldots, x_d)R_\fp}=\fp R_\fp$. So by Corollary \ref{co9pr} again,
$$\fa_{k-r}(M_\fp)=\bigcap_{n_{r+1}, \ldots, n_k>0}\Ann \frac{(x_{r+1}^{n_{r+1}}, \ldots, x_k^{n_k})M_\fp:x_{k+1}}{(x_{r+1}^{n_{r+1}+1}, \ldots, x_k^{n_k+1})M_\fp:(x_{r+1}\ldots x_k)}.$$
Therefore, $\fa_k(M)R_\fp\subseteq \fa_{k-r}(M_\fp)$.
\end{proof}

In the rest of this section, we will use the results above to investigate the behavior of the depth of a module when passing to  localization. Note that this result has been known in some special cases (cf. Schenzel \cite{sch}). We start with a lemma.
\begin{lemma}\label{lem14}
Let $M$ be a finitely generated $R$-module and $x_1, \ldots, x_d$ be a system of parameters of $M$. Assume that $x_1, \ldots, x_d$ is a dd-sequence on $M$. For $k=0, 1, \ldots, d-1$ and $n\geq 5$ we have
$$\bigcap_{i=1}^{k+1}\Ann (0:x_i)_{M/(x_{i+1}^n, \ldots, x_{k+1}^n)M}\subseteq \sqrt{\fa_0(M)\ldots \fa_k(M)}.$$
\end{lemma}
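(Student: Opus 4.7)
The plan is to induct on $k$. For the base case $k=0$ the intersection is the single ideal $\Ann(0:_M x_1)$; since $x_1,\ldots,x_d$ is a dd-sequence and a system of parameters, Lemma \ref{pr8} with $i=0$ (together with $\sqrt{(x_1,\ldots,x_d)}=\fm$) identifies $H^0_\fm(M)=H^0_{(x_1,\ldots,x_d)}(M)\simeq 0:_Mx_1$, so this single term already equals $\fa_0(M)$, and the desired inclusion is immediate.

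For the inductive step I would pass to the quotient $M'':=M/x_{k+1}^nM$. By Lemma \ref{lem1}(i) the subsequence $x_1,\ldots,x_{k+1}$ is a dd-sequence on $M$, and unwinding the defining condition with $n_{k+1}=n$ shows that $x_1,\ldots,x_k$ is a dd-sequence on $M''$; extending by the remaining parameter elements $x_{k+2},\ldots,x_d$ gives a dd-sequence that is a system of parameters on $M''$. Since $M/(x_{i+1}^n,\ldots,x_{k+1}^n)M=M''/(x_{i+1}^n,\ldots,x_k^n)M''$ for each $i\leq k$, applying the inductive hypothesis to $M''$ yields
\[\bigcap_{i=1}^{k}\Ann(0:x_i)_{M/(x_{i+1}^n,\ldots,x_{k+1}^n)M}\subseteq \sqrt{\fa_0(M'')\cdots \fa_{k-1}(M'')}.\]
Next I would invoke Corollary \ref{co13}: the splitting short exact sequence $0\to H^j_\fm(M)\to H^j_\fm(M'')\to H^{j+1}_\fm(M/0:_Mx_{k+1})\to 0$ for $j<k$ and $n\geq 5$ gives $\fa_j(M'')\subseteq \fa_j(M)$ for $0\leq j\leq k-1$. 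Combining, the first $k$ terms of the intersection are already contained in $\sqrt{\fa_0(M)\cdots\fa_{k-1}(M)}$.

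It remains to show that the full intersection, using the extra $(k{+}1)$-st term $\Ann(0:_Mx_{k+1})$, lies in $\sqrt{\fa_k(M)}$; then the identity $\sqrt{A}\cap\sqrt{B}=\sqrt{AB}$ completes the proof. For this I would exploit Corollary \ref{co9pr}, which presents
\[\fa_k(M)=\bigcap_{n_1,\ldots,n_k>0}\Ann\frac{(x_1^{n_1},\ldots,x_k^{n_k})M:x_{k+1}}{(x_1^{n_1+1},\ldots,x_k^{n_k+1})M:(x_1\cdots x_k)},\]
and rewrite the denominator through Lemma \ref{lem2}(ii) as $\sum_j(x_1^{n_1},\ldots,\widehat{x_j^{n_j}},\ldots,x_k^{n_k})M:x_j+(x_1^{n_1},\ldots,x_k^{n_k})M$. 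For $a\in\bigcap_{i=1}^{k+1}\Ann(0:x_i)_{M/(x_{i+1}^n,\ldots,x_{k+1}^n)M}$ and any $u$ with $x_{k+1}u\in(x_1^{n_1},\ldots,x_k^{n_k})M$, the plan is to iterate the annihilation conditions on $0:x_i$ (using Lemma \ref{lem2}(i), which provides $x_j\cdot H^i_{(x_1,\ldots,x_i)}(M)=0$ for $i<j$) to transport $au$ into this sum of colon modules at the cost of taking a bounded power of $a$.

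The main obstacle lies precisely in this bridging step: the conditions furnished by the intersection are expressed through colons by $x_i$ inside $M/(x_{i+1}^n,\ldots,x_{k+1}^n)M$ (ideals indexed from the top), while $\fa_k(M)$ naturally appears through colons by $x_{k+1}$ modulo $(x_1^{n_1},\ldots,x_k^{n_k})M$ (ideals indexed from the bottom). The reversal is nontrivial and must rely on the symmetry properties of dd-sequences encoded in Lemma \ref{lem2}, together with the freedom to choose the exponents $n_i$ and use $n\geq 5$ to invoke the splittings of Theorem \ref{thm12} whenever an intermediate quotient needs to be decomposed.
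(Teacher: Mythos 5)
Your base case and the overall plan (induct on $k$, pass to $M''=M/x_{k+1}^nM$) match the paper, but the inductive step has a genuine gap, and indeed one you identify yourself. The problem is that you throw away the crucial half of the information in Corollary \ref{co13}. The splitting
$$H_\fm^j(M'')\simeq H_\fm^j(M)\oplus H_\fm^{j+1}(M/0:_Mx_{k+1})$$
does not merely give $\fa_j(M'')\subseteq\fa_j(M)$; it gives the stronger statement $\fa_j(M'')=\fa_j(M)\cap\fa_{j+1}(M/0:_Mx_{k+1})$. You keep only the first factor and then hope to manufacture $\fa_k(M)$ separately from $\Ann(0:_Mx_{k+1})$ via Corollary \ref{co9pr}; you then concede this ``bridging step'' is ``the main obstacle'' and never carry it out. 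That is exactly where the proof fails to close.

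The paper's argument keeps both factors. After applying the inductive hypothesis to $M''$, it obtains (up to radical) the product
$$\Ann_R(0:_Mx_{k+1})\cdot\fa_0(M)\cdots\fa_{k-1}(M)\cdot\fa_1(M/0:_Mx_{k+1})\cdots\fa_k(M/0:_Mx_{k+1}),$$
and then uses the long exact sequence coming from $0\to 0:_Mx_{k+1}\to M\to M/0:_Mx_{k+1}\to 0$ together with the elementary containment $\Ann_R(0:_Mx_{k+1})\subseteq\fa_i(0:_Mx_{k+1})$ for all $i$ to conclude $\Ann_R(0:_Mx_{k+1})\cdot\fa_i(M/0:_Mx_{k+1})\subseteq\fa_i(M)$. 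In particular the $i=k$ instance supplies the missing factor $\fa_k(M)$, with no appeal to Corollary \ref{co9pr} or to the colon-module gymnastics you propose. To repair your proof you should retain $\fa_{j+1}(M/0:_Mx_{k+1})$ from the splitting and combine it with $\Ann_R(0:_Mx_{k+1})$ through the long exact sequence, rather than trying to reach $\fa_k(M)$ via a direct d-sequence computation.
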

\begin{proof}
The lemma is proved by induction on $k$. If $k=0$ then $0:_Mx_1=\bigcup_{i=1}^\infty0:_M(x_1^t, \ldots, x_d^t)=H^0_\fm(M)$. Thus $\Ann_R(0:_Mx_1)=\fa_0(M)$. Let $k>0$. The sequence $x_1, \ldots, \widehat{x_{k+1}}, \ldots, x_d$ is a dd-sequence on $M/x_{k+1}^nM$ by Lemma \ref{lem1}, $(ii)$. Hence, from the induction assumption and Corollary \ref{co13} we get
\[\begin{aligned}
&\bigcap_{i=1}^{k+1}\Ann (0:x_i)_{M/(x_{i+1}^n, \ldots, x_{k+1}^n)M}\\
&\subseteq \sqrt{\Ann_R (0:_Mx_{k+1})\fa_0(M/x_{k+1}^nM)\ldots \fa_{k-1}(M/x_{k+1}^nM)}\\
&=\sqrt{\Ann_R (0:_Mx_{k+1})\fa_0(M)\ldots \fa_{k-1}(M)\fa_1(M/0:_Mx_{k+1})\ldots \fa_{k-1}(M/0:_Mx_{k+1})}.
\end{aligned}\]
Note that $\Ann_R(0:_Mx_{k+1})\subseteq \fa_0(0:_Mx_{k+1})\cap \ldots \cap \fa_k(0:_Mx_{k+1})$. From the long exact sequence of local cohomology modules
\begin{multline*}
0\longrightarrow H^1_\fm(0:_Mx_{k+1})\longrightarrow H^1_\fm(M)\longrightarrow H^1_\fm(M/0:_Mx_{k+1})\longrightarrow \ldots \\
\ldots \longrightarrow H^k_\fm(0:_Mx_{k+1})\longrightarrow H^k_\fm(M)\longrightarrow H^k_\fm(M/0:_Mx_{k+1})\longrightarrow 0,
\end{multline*}
we imply that 
\begin{multline*}\sqrt{\Ann_R (0:_Mx_{k+1})\fa_1(M/0:_Mx_{k+1})\ldots \fa_{k-1}(M/0:_Mx_{k+1})}\\
\subseteq \prod_{i=0}^k\sqrt{\fa_i(0:_Mx_{k+1})\fa_i(M/0:_Mx_{k+1})}\subseteq \prod_{i=0}^k\sqrt{\fa_i(M)}.
\end{multline*}
Therefore, $$\bigcap_{i=1}^{k+1}\Ann (0:x_i)_{M/(x_{i+1}^n, \ldots, x_{k+1}^n)M}\subseteq \sqrt{\fa_0(M)\ldots \fa_k(M)}.$$
\end{proof}

\begin{proposition}\label{pro15}
Let $R$ be a Noetherian local ring and $M$ be a finitely generated $R$-module. Assume that $M$ has a p-standard system of parameters. Let $\fp\in \Supp(M)$ and $k\in \{0, 1, \ldots, d\}$. Then $\fp\supseteq \fa_0(M)\ldots \fa_k(M)$ if and only if $\depth M_\fp+\dim R/\fp\leq k$. Consequently, we have
\begin{multline*}
V(\fa_0(M)\ldots \fa_k(M))\setminus V(\fa_0(M)\ldots \fa_{k-1}(M))=\\
\{\fp\in\Supp(M):\depth M_\fp+\dim R/\fp=k\}.
\end{multline*}
\end{proposition}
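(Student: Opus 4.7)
Write $r = \dim R/\fp$ and $s = \depth M_\fp$. The equivalence I aim to prove is $\fp \supseteq \fa_0(M)\cdots\fa_k(M)$ iff $r+s \leq k$; the displayed consequence then follows by set-theoretic subtraction between the cases $k$ and $k-1$. By Proposition \ref{pr5} I fix a p-standard system of parameters $x_1, \ldots, x_d$ of $M$ with $x_{r+1}, \ldots, x_d \in \fp$ and with $x_{r+1}, \ldots, x_{r+s}$ a maximal $M_\fp$-regular sequence; this choice of s.o.p.\ is the only technical ingredient that is used throughout.

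The direction $(\Leftarrow)$ is short. If $r+s \leq k$, then $H^s_{\fp R_\fp}(M_\fp) \neq 0$ by definition of $s$, so $\fa_s(M_\fp)$ is a proper ideal of the local ring $R_\fp$, hence contained in $\fp R_\fp$. Corollary \ref{co6} gives $\fa_{r+s}(M) R_\fp \subseteq \fa_s(M_\fp) \subseteq \fp R_\fp$, so $\fa_{r+s}(M) \subseteq \fp$. Since $r+s \leq k$, this ideal is one of the factors of $\fa_0(M)\cdots\fa_k(M)$, so the whole product lies in $\fp$.

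The direction $(\Rightarrow)$ I argue by contradiction. Assume $\fp \supseteq \fa_0(M)\cdots\fa_k(M)$ but $r+s \geq k+1$, i.e.\ $k+1 \leq r+s$. By Lemma \ref{lem14} applied to the dd-sequence $x_1,\ldots,x_d$ (for any $n\geq 5$),
\[
\bigcap_{i=1}^{k+1} \Ann\bigl((0:x_i)_{M/(x_{i+1}^n,\ldots,x_{k+1}^n)M}\bigr)\subseteq \sqrt{\fa_0(M)\cdots\fa_k(M)}\subseteq \fp.
\]
Since $\fp$ is prime and the intersection is finite, some index $i^*\in\{1,\ldots,k+1\}$ satisfies $\Ann\bigl((0:x_{i^*})_{M/(x_{i^*+1}^n,\ldots,x_{k+1}^n)M}\bigr)\subseteq \fp$; because the module is finitely generated, localizing at $\fp$ gives $0:_{M_\fp/(x_{i^*+1}^n,\ldots,x_{k+1}^n)M_\fp} x_{i^*}\neq 0$.

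Now I rule out the two ranges for $i^*$. If $i^*\leq r$ then $x_{i^*}\notin\fp$ is a unit in $R_\fp$, so the displayed kernel is zero, a contradiction. Hence $r+1\leq i^*\leq k+1\leq r+s$, so $x_{i^*}$ and each of $x_{i^*+1},\ldots,x_{k+1}$ lie in the regular sequence $x_{r+1},\ldots,x_{r+s}$ on $M_\fp$. Permutability of regular sequences in a Noetherian local ring, together with the fact that powers of a regular sequence are again regular, implies that $x_{i^*+1}^n,\ldots,x_{k+1}^n,x_{i^*}$ is a regular sequence on $M_\fp$; in particular $x_{i^*}$ is regular on the quotient $M_\fp/(x_{i^*+1}^n,\ldots,x_{k+1}^n)M_\fp$, contradicting the non-vanishing of its kernel there. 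Hence $r+s\leq k$.

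The main (and essentially only) obstacle is arranging, via Proposition \ref{pr5}, for $x_{r+1},\ldots,x_{r+s}$ to be a maximal $M_\fp$-regular sequence inside a p-standard s.o.p.\ of $M$; with that choice in hand, the proof becomes a clean prime/intersection argument on Lemma \ref{lem14} combined with the permutability of regular sequences in a local ring.
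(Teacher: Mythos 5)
Your proof is correct and takes essentially the same route as the paper's: Proposition \ref{pr5} to produce the adapted p-standard s.o.p.\ with $x_{r+1},\dots,x_d\in\fp$, Lemma \ref{lem14} combined with a prime/intersection argument for $(\Rightarrow)$, and Corollary \ref{co6} for $(\Leftarrow)$. The only cosmetic difference is in $(\Leftarrow)$, where you argue directly via $\fa_{r+s}(M)R_\fp\subseteq\fa_s(M_\fp)\subsetneq R_\fp$ rather than the paper's contrapositive, and your $(\Rightarrow)$ is phrased as a contradiction while the paper's is a terse direct argument; these are not genuinely different approaches.
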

\begin{proof} Set $r=\dim R/\fp$. Following Proposition \ref{pr5}, there is a p-standard system of parameters $x_1, \ldots, x_d$ of $M$ such that $x_{r+1}, \ldots, x_d\in\fp$. Note that $x_1, \ldots, x_d$ is a dd-sequence on $M$ by Lemma \ref{lem0}. Using Lemma \ref{lem14},  if $\fp\supseteq \fa_0(M)\ldots \fa_k(M)$ then $\fp\supseteq \Ann (0:x_i)_{M/(x_{i+1}^n, \ldots, x_{k+1}^n)M}$ for some $i\in \{1, \ldots, k+1\}, n\geq 5$.  Thus $x_i, \ldots, x_{k+1}\in \fp$ and $x_i, \ldots, x_{k+1}$ is not a regular sequence on $M_\fp$. Proposition \ref{pr5} implies $\depth M_\fp+\dim R/\fp\leq k$. 

Conversely, assume that $\fp\not\supseteq \fa_0(M)\ldots \fa_{k-1}(M)$, that is, $\fp\not \supseteq \fa_i(M)$ for $i=0, 1, \ldots, k-1$. From Corollary \ref{co6}, $R_\fp=\fa_i(M)R_\fp\subseteq \fa_{i-r}(M_\fp)$, hence $H^{i-r}_{\fp R_\fp}(M_\fp)=0$ for all $i<k$. In other words, $\depth M_\fp +\dim R/\fp\geq k$.
\end{proof}
The following corollary  generalizes a result of  Cuong \cite[Chapter 1, Proposition 3.3]{ntc1}  and is a direct consequence of Proposition \ref{pro15}.
\begin{corollary}\label{co16}
Assume that $M$ admits a p-standard system of parameters. Then
$$V(\fa(M))=\{\fp\in \Supp(M): \depth M_\fp+\dim R/\fp<d\}.$$
In particular, if $\fp\in \Supp(M)$ and $\fp\not\supseteq \fa(M)$ then $M_\fp$ is Cohen-Macaulay of dimension $d-\dim R/\fp$.
\end{corollary}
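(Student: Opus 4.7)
The plan is to extract both assertions directly from Proposition \ref{pro15}, which already encodes the prime ideals containing partial products $\fa_0(M)\cdots\fa_k(M)$ in terms of the quantity $\depth M_\fp+\dim R/\fp$. First I would observe that $\fa(M)$ is by definition the product $\fa_0(M)\fa_1(M)\cdots\fa_{d-1}(M)$, and that $\Ann_RM\subseteq \fa_i(M)$ for every $i$, so $V(\fa(M))\subseteq V(\Ann_RM)=\Supp(M)$ and hence every prime containing $\fa(M)$ automatically lies in $\Supp(M)$.

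For the first claim I would then apply Proposition \ref{pro15} with $k=d-1$: for $\fp\in\Supp(M)$,
$$\fp\supseteq \fa(M)=\fa_0(M)\cdots\fa_{d-1}(M)\ \Longleftrightarrow\ \depth M_\fp+\dim R/\fp\leq d-1<d,$$
which is precisely the equality of sets stated in the corollary.

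For the second claim, suppose $\fp\in\Supp(M)$ with $\fp\not\supseteq \fa(M)$. By the first part, $\depth M_\fp+\dim R/\fp\geq d$. I then combine this with the two standard inequalities $\depth M_\fp\leq \dim M_\fp$ and $\dim M_\fp+\dim R/\fp\leq \dim M=d$; the latter follows by concatenating chains of primes in $R/\Ann_RM$ that terminate at and begin at $\fp/\Ann_RM$. These force equality throughout, giving $\depth M_\fp=\dim M_\fp=d-\dim R/\fp$, so $M_\fp$ is Cohen-Macaulay of dimension $d-\dim R/\fp$.

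There is no real obstacle here: the corollary is a formal specialization of Proposition \ref{pro15} at $k=d-1$, together with the elementary inequality $\dim M_\fp+\dim R/\fp\leq \dim M$. The only point that requires a small justification is the containment $V(\fa(M))\subseteq \Supp(M)$, which lets one drop the a priori support hypothesis on the left-hand side of the set equality.
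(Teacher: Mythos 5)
Your proof is correct and matches the paper's (implicit) argument: the paper simply declares this corollary to be ``a direct consequence of Proposition~\ref{pro15},'' and your derivation — specializing Proposition~\ref{pro15} at $k=d-1$, using the observation that $\Ann_R M\subseteq\fa_i(M)$ to get $V(\fa(M))\subseteq\Supp(M)$, and then combining $\depth M_\fp+\dim R/\fp\geq d$ with $\depth M_\fp\leq\dim M_\fp$ and $\dim M_\fp+\dim R/\fp\leq d$ to force Cohen-Macaulayness — is exactly the intended unpacking.
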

\begin{corollary}\label{co17}
Assume that $M$ admits a p-standard system of parameters.  For each $k\geq 0$, the set
$$\{\fp\in \Supp(M): \depth M_\fp+\dim R/\fp<k\},$$
is a closed subset of $\Supp(M)$ of dimension $\dim R/\fa_0(M)\ldots \fa_k(M)$.
\end{corollary}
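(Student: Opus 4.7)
The plan is to derive this as a direct consequence of Proposition \ref{pro15}. The first assertion of that proposition gives, for $\fp \in \Supp(M)$, the equivalence
$$\depth M_\fp + \dim R/\fp \leq k \iff \fp \supseteq \fa_0(M)\cdots \fa_k(M).$$
Applying this with $k$ replaced by $k-1$, and observing that $\depth M_\fp + \dim R/\fp < k$ means $\leq k-1$, I identify the set in question with $V(\fa_0(M)\cdots\fa_{k-1}(M)) \cap \Supp(M)$.

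This set is manifestly closed. Moreover, the intersection with $\Supp(M)$ is redundant: since $\fa_i(M) = \Ann_R H^i_\fm(M)$ contains $\Ann_R M$ for every $i$, the product $\fa_0(M)\cdots\fa_{k-1}(M)$ contains $\Ann_R M$ whenever $k \geq 1$, and hence its variety already lies inside $V(\Ann_R M) = \Supp(M)$. Thus the set equals $V(\fa_0(M)\cdots\fa_{k-1}(M))$, whose Krull dimension is $\dim R/\fa_0(M)\cdots\fa_{k-1}(M)$ by definition. The boundary case $k = 0$ is vacuous, since the condition $\depth M_\fp + \dim R/\fp < 0$ is never met.

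Since all the serious work is accomplished by Proposition \ref{pro15} (which in turn rests on Lemma \ref{lem14} and Corollary \ref{co6} describing the behavior of p-standard systems under localization), there is no real obstacle here. The only point requiring attention is the index bookkeeping: the strict inequality ``$<k$'' corresponds to the product of annihilator ideals indexed up to $k-1$, so one should be careful aligning the notation with the form given in the statement.
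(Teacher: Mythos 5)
Your argument is correct and is precisely the argument the paper intends; Corollary \ref{co17} is stated as a direct consequence of Proposition \ref{pro15} and the identification you make, $\{\fp \in \Supp(M) : \depth M_\fp + \dim R/\fp < k\} = V(\fa_0(M)\cdots\fa_{k-1}(M))$, together with the observation that $V(\fa_0(M)\cdots\fa_{k-1}(M)) \subseteq V(\Ann_R M) = \Supp(M)$ since each $\fa_i(M)$ contains $\Ann_R M$, is all that is needed.

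One thing, though: what you describe as ``index bookkeeping'' to be handled with care is in fact an off-by-one error in the printed statement of the corollary, and you should say so outright rather than leave it as a vague caveat. Your computation yields the dimension $\dim R/\fa_0(M)\cdots\fa_{k-1}(M)$, whereas the corollary as written claims $\dim R/\fa_0(M)\cdots\fa_k(M)$. These are genuinely different in general (by Proposition \ref{pro15}, $V(\fa_0(M)\cdots\fa_k(M)) \setminus V(\fa_0(M)\cdots\fa_{k-1}(M))$ is nonempty whenever some $\fp$ has $\depth M_\fp + \dim R/\fp = k$). So either the strict inequality $<k$ in the corollary should read $\leq k$ (equivalently $< k+1$), or the final index should be $k-1$. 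The sentence immediately following the corollary in the paper, which equates $\{\fp \in \Supp(M) : \depth M_\fp + \dim R/\fp < k+1\}$ with $V(\fa_0(M)\cdots\fa_k(M))$, confirms the intended indexing and that your version is the correct one.
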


Following Proposition \ref{pro15}, if a finitely generated $R$-module $M$ admits a p-standard system of parameters then 
$$\{\fp\in \Supp(M): \depth M_\fp+\dim R/\fp<k+1\}=V(\fa_0(M)\ldots \fa_k(M))$$
which is a closed subset in $\Spec(R)$. Hence, $$k\geq \max_\fp\{\depth M_\fp+\dim R/\fp<k+1\}\geq \dim R/\fa_0(M)\ldots\fa_k(M).$$ The next example shows that both inequalities are strict in general.
\begin{example}
Let $R=k[[X, Y, Z, T, W, U, V]]$ be the ring of formal power series with coefficients in a field $k$. Put $M=R/(Y, Z, T)\cap(W, U, V)$. It can be proved that $\dim M=4$ and $x_1=X, x_2=Y+W, x_3=Z+U, x_4=T+V$ is a system of parameters of $M$ and 
$$\ell(M/(x_1^{n_1}, x_2^{n_2}, x_3^{n_3}, x_4^{n_4})M)=2n_1n_2n_3n_4+n_1,$$
for all $n_1, n_2, n_3, n_4>0$. So $x_1, x_2^2, x_3^3, x_4^4$ is a p-standard system of parameters of $M$ and $\dim R/\fa(M)=1$ by Lemma \ref{lem0} and \cite[Corollary 3.9]{cc}. Moreover, $\nCM(M)=\{\fp, \fm\}$ where $\fp=(Y, Z, T, W, U, V)$. We have $\depth M_\fp=1$ and $\depth M_\fp+\dim R/\fp=\depth M=2$. Therefore,
$$3>\max_\fp\{\depth M_\fp+\dim R/\fp<4\}>\dim R/\fa(M).$$
\end{example}


\section{Arithmetic Macaulayfication of module}
\label{section4}


Let $M$ be a finitely generated $R$-module. An arithmetic Macaulayfication of $M$ is a Cohen-Macaulay Rees module $\cR(M, I):=\oplus_{n\geq 0}I^nM$ for some ideal $I$ with $\htt(I+\Ann_R(M))/\Ann_R(M)>0$. The existence of arithmetic Macaulayfication of ring has been studied by Kawasaki \cite{tk2, tk3}. In this section we will investigate the existence of arithmetic Macaulayfication of module and prove Theorem \ref{main1a}. p-Standard system of parameters plays a central role in this investigation.

The following proposition is a generalization of Theorem 3.2 of Zhou \cite{zh}.

\begin{proposition}\label{zh} Let $R$ be a Noetherian local ring and $M$ be a finitely generated $R$-module. Then $\dim R/\fa(M)<\dim M$ if and only if $\dim R/\fa(R/\fp)<\dim R/\fp$ for all primes $\fp\in \Ass(M)$ with $\dim R/\fp=\dim M$.
\end{proposition}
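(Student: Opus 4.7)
Plan: Set $d=\dim M$. Since $\fa(M)\supseteq \Ann(M)$, the locus $V(\fa(M))$ sits inside $\Supp(M)$, whose top-dimensional primes are precisely the set $\Ass^{\mathrm{max}}(M):=\{\fp\in\Ass(M):\dim R/\fp=d\}$. Hence $\dim R/\fa(M)<d$ is equivalent to $\fa(M)\not\subseteq \fp$ for every $\fp\in \Ass^{\mathrm{max}}(M)$, and $\dim R/\fa(R/\fp)<d$ is equivalent to $\fa(R/\fp)\not\subseteq \fp$. It therefore suffices to prove, for each fixed $\fp\in \Ass^{\mathrm{max}}(M)$, the equivalence $\fa(M)\subseteq \fp \iff \fa(R/\fp)\subseteq \fp$.

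Step 1 (reduction to a $\fp$-primary submodule): From a primary decomposition $0=\bigcap_\alpha Q_\alpha$ of $0$ in $M$, set $U:=\bigcap_{\alpha\ne\fp}Q_\alpha$. Then $\Ass(U)=\{\fp\}$ and $\Ass(M/U)\subseteq \Ass(M)\setminus\{\fp\}$. Since $\fp$ is top-dimensional, a dimension argument shows $\fq\not\subseteq\fp$ for every $\fq\in \Ass(M)\setminus\{\fp\}$, whence $\fp\notin \Supp(M/U)$ and $\Ann(M/U)\not\subseteq \fp$. The long exact sequence of local cohomology for $0\to U\to M\to M/U\to 0$ produces the two multiplicative inclusions
\[\fa_{j-1}(M/U)\,\fa_j(M)\subseteq \fa_j(U), \qquad \fa_j(U)\,\fa_j(M/U)\subseteq \fa_j(M).\]
Since $\fa_j(M/U)\supseteq \Ann(M/U)\not\subseteq \fp$, the point $\fp$ does not lie in $V(\fa_j(M/U))$, so passing to $V(\cdot)$ and taking unions over $j$ yields $\fp\in V(\fa(M)) \iff \fp\in V(\fa(U))$.

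Step 2 (reduction to $R/\fp$): For the $\fp$-primary module $U$, construct a chain $0=N_0\subset N_1\subset\cdots\subset N_m\subseteq U$ with $N_i/N_{i-1}\cong R/\fp$ and $(U/N_m)_\fp=0$, where $m=\mathrm{length}_{R_\fp}(U_\fp)$; the chain extends because $\fp\in\Ass(U/N_i)$ whenever $(U/N_i)_\fp\ne 0$. Applying Step 1 to $0\to N_m\to U\to U/N_m\to 0$ (whose quotient has $\fp\notin \Supp$) reduces the claim to $\fa(N_m)\subseteq \fp \iff \fa(R/\fp)\subseteq \fp$. Iterating the LES inclusion $\fa_j(N_{i-1})\,\fa_j(R/\fp)\subseteq \fa_j(N_i)$ along the filtration yields $\fa(N_m)\supseteq \fa(R/\fp)^m$, giving the implication $\fa(N_m)\subseteq \fp \Rightarrow \fa(R/\fp)\subseteq \fp$. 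For the converse I would induct on $m$ using the dual LES inclusion $\fa_{j+1}(N_{i-1})\,\fa_j(N_i)\subseteq \fa_j(R/\fp)$: if $\fa_{j_0}(R/\fp)\subseteq \fp$, then either $\fa_{j_0}(N_m)\subseteq \fp$ (in which case $\fa(N_m)\subseteq \fa_{j_0}(N_m)\subseteq \fp$), or $\fa_{j_0+1}(N_{m-1})\subseteq \fp$ and the induction hypothesis on $N_{m-1}$ propagates the condition back.

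The main obstacle is the converse implication in Step 2. The one-sided containment $\fa(N_m)\supseteq \fa(R/\fp)^m$ immediately produces only one direction, and the reverse demands careful bookkeeping of all three LES-derived inclusions across the filtration. A delicate point is the top-degree annihilator $\fa_d(R/\fp)$, which always contains $\fp$ and lies outside the product defining $\fa(R/\fp)$; ensuring that this boundary term does not obstruct the inductive propagation of the containment condition from $R/\fp$ to $N_m$ is where the argument requires the most care.
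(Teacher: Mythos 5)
Your plan is structurally sound and genuinely different from the paper's proof: the paper first passes to the equidimensional quotient $M/U_M(0)$, translates $\dim R/\fa(M) < \dim M$ into the existence of a uniform local cohomology element, and then invokes Zhou's Theorem 3.2 in \cite{zh} as a black box (asserting that Zhou's proof extends from rings to equidimensional modules). Your argument is self-contained: Step 1 (splitting off the $\fp$-primary component) and the construction of the $R/\fp$-filtration are correct as written, and the two multiplicative inclusions in Step 1 together with $\fa_j(M/U)\not\subseteq\fp$ do give $\fp\in V(\fa(M))\iff \fp\in V(\fa(U))$ cleanly. The one-sided containment $\fa(R/\fp)^m\subseteq\fa(N_m)$ is also correct and settles $\fa(N_m)\subseteq\fp\Rightarrow\fa(R/\fp)\subseteq\fp$.

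The gap is exactly where you flagged it, and your proposed mechanism for closing it does not work. You try to run the converse $\fa(R/\fp)\subseteq\fp\Rightarrow\fa(N_m)\subseteq\fp$ through the inclusion $\fa_j(N_i)\,\fa_{j+1}(N_{i-1})\subseteq\fa_j(R/\fp)$, but this pushes the index \emph{up} by one at each step; when $j_0=d-1$ the induction lands on $\fa_d(N_{m-1})$, which always contains $\fp$ and therefore gives no information, and even for $j_0<d-1$ the dichotomy ``either $\fa_{j_0}(N_m)\subseteq\fp$ or $\fa_{j_0+1}(N_{m-1})\subseteq\fp$'' doesn't feed into any usable inductive hypothesis about $N_{m-1}$. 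The correct tool is the third LES inclusion, the one you list in Step 1 but never use in Step 2: from $H^{j-1}_\fm(R/\fp)\to H^j_\fm(N_{i-1})\to H^j_\fm(N_i)$ one gets
\[
\fa_{j-1}(R/\fp)\,\fa_j(N_i)\subseteq\fa_j(N_{i-1}),
\]
which pushes the index \emph{down}. Pick $j_0$ \emph{minimal} with $\fa_{j_0}(R/\fp)\subseteq\fp$ (so $\fa_{j_0-1}(R/\fp)\not\subseteq\fp$, with the convention $\fa_{-1}=R$). Induct on $i$: the base $N_1=R/\fp$ is trivial, and the step gives $\fa_{j_0-1}(R/\fp)\,\fa_{j_0}(N_i)\subseteq\fa_{j_0}(N_{i-1})\subseteq\fp$, whence $\fa_{j_0}(N_i)\subseteq\fp$ by primality. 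This yields $\fa_{j_0}(N_m)\subseteq\fp$ with $j_0<d$, hence $\fa(N_m)\subseteq\fp$. Minimality of $j_0$ is what sidesteps the top-degree boundary entirely, since one only ever needs $\fa_{j_0-1}(R/\fp)$, never $\fa_{j_0+1}$. With this substitution your argument is complete and arguably preferable to the paper's, since it avoids the appeal to \cite{zh}.
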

\begin{proof}
Let $U_M(0)$ be the biggest submodule of $M$ with $\dim U_M(0)<\dim M$. Since $M$ is Noetherian, $U_M(0)$ exists uniquely and 
$$\Ass M/U_M(0)=\{\fp\in \Ass(M): \dim R/\fp=\dim M\},$$
(see \cite[Remark 2.3]{cc1}). In particular, $M/U_M(0)$ is equidimensional. From Lemma \ref{am}, we have $\fa(M)\subseteq \Ann_R U_M(0)$. Combining this with the exact sequence of local cohomology modules 
$$\ldots \longrightarrow H_\fm^i(U_M(0))\longrightarrow H^i_\fm(M)\longrightarrow H_\fm^i(M/U_M(0)) \longrightarrow H_\fm^{i+1}(U_M(0))\longrightarrow \ldots $$
we get the inclusions
$$\fa(M/U_M(0))\subseteq \sqrt{\fa(M)}\subseteq \sqrt{\fa(M/U_M(0)) + \Ann_RU_M(0)}.$$
Hence, $\dim R/\fa(M)<\dim M$ if and only if $\dim R/\fa(M/U_M(0))<\dim M$. This reduces the proof to the case $M$ is equidimensional.

Now assume $M$ is equidimensional. A uniform local cohomology element of $M$ is an element 
$$x\in \fa_0(M)\cap\fa_1(M)\cap\ldots\cap \fa_{d-1}(M), \ d=\dim M,$$
which is not contained in any minimal associated prime ideal of $M$. Since $M$ is equidimensional, $x\in \fa_0(M)\cap\fa_1(M)\cap\ldots\cap\fa_{d-1}(M)$ is a uniform local cohomology element if and only if it is a parameter element. Hence, $M$ has a uniform local cohomology element if and only if $\dim R/\fa(M)<\dim M$. In \cite[Theorem 3.2]{zh} Zhou proves that if $S$ is an equidimensional local ring then $S$ has a uniform local cohomology element if and only if so does $S/\fp$ for all primes $\fp\in\Ass S$, $\dim S/\fp=\dim S$. The proof in fact works well if we replace the ring $S$ by an equidimensional finitely generated $S$-module. In particular, in our case $M$ has a uniform local cohomology element if and if so does $R/\fp$ for any $\fp\in \Ass M$. This completes the proof.
\end{proof}

The first theorem of this section relates the existence of p-standard system of parameters on modules and on the ground ring. It is in fact a starting point of the works in this paper. Recall that a uniform local cohomology annihilator of $M$ is defined by Huneke \cite{hu1} to be an element which kills all local cohomology modules $H^i_\fm(M)$ with $i\not=\dim M$ and which is not contained in any minimal associated prime ideal of the module $M$. 

\begin{theorem}\label{main1} Let $R$ be a Noetherian local ring and $M$ be a finitely generated $R$-module. The following statements are equivalent:
\begin{enumerate}[(a)]
\item $M$ admits a p-standard system of parameters.

\item Any quotient domain of $R/\Ann_RM$ has a uniform local cohomology annihilator. 

\item $R/\Ann_RM$ admits a p-standard system of parameters. 

\item Any finitely generated $R$-module $N$ with $\Supp(N)\subseteq \Supp(M)$ admits a p-standard system of parameters.
\end{enumerate}
\end{theorem}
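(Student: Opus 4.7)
My plan is to reduce the four-way equivalence to the single loop (a)$\Rightarrow$(b)$\Rightarrow$(a); once these are established, the remaining implications follow formally by applying them to $\bar R:=R/\Ann_R M$ (noting $\Supp \bar R=\Supp M$) and to arbitrary $N$ with $\Supp N\subseteq \Supp M$. The key tools will be Proposition \ref{pr5} (localization of p-standard systems of parameters), Proposition \ref{zh} (Zhou-type reduction to maximal-dimensional associated primes), Corollary \ref{co16} (the description of the non-Cohen-Macaulay locus), and Lemma \ref{lem0} (dd-sequences versus p-standard systems of parameters).

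For (a)$\Rightarrow$(b), I would fix a prime $\fq\supseteq \Ann_R M$ and set $r=\dim R/\fq$. Proposition \ref{pr5} supplies a p-standard system of parameters $x_1,\dots,x_d$ of $M$ with $x_{r+1},\dots,x_d\in \fq$. The module $M':=M/(x_{r+1}^n,\dots,x_d^n)M$ then has dimension $r$ and, by Lemma \ref{lem0}, inherits a p-standard system of parameters after suitable power-raising; Corollary \ref{co16} forces $\dim R/\fa(M')<r$. Since $\fq\supseteq \Ann_R M'$ with $\dim R/\fq=r=\dim M'$, and since any strict inclusion of primes in a Noetherian ring strictly increases the dimension of the quotient, $\fq$ must be a minimal (hence associated) prime of $M'$ of maximal dimension. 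Proposition \ref{zh} therefore delivers $\dim R/\fa(R/\fq)<\dim R/\fq$, which is exactly the statement that $R/\fq$ admits a uniform local cohomology annihilator.

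The heart of the argument is (b)$\Rightarrow$(a), which I would establish by a recursive construction. For the base step, every $\fp\in \Ass_R M$ with $\dim R/\fp=d$ contains $\Ann_R M$, so $R/\fp$ is a quotient domain of $R/\Ann_R M$ and has a uniform local cohomology annihilator by (b); this is precisely the input required by the reverse direction of Proposition \ref{zh}, which then forces $\dim R/\fa(M)<d$ and hence produces a parameter element $x_d\in \fa(M)$. Passing to $M_1:=M/x_d^nM$, every prime in $\Supp M_1$ still contains $\Ann_R M$, so hypothesis (b) propagates verbatim; applying the same reasoning to $M_1$ produces $x_{d-1}\in \fa(M_1)$, and so on. Iterating $d$ times yields a sequence with $x_i\in \fa(M/(x_{i+1}^{n_{i+1}},\dots,x_d^{n_d})M)$ for every $i$, which Lemma \ref{lem0}, after raising each $x_i$ to a sufficiently high power, packages into a genuine p-standard system of parameters of $M$.

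The remaining equivalences are then formal. Applying (a)$\Leftrightarrow$(b) to $\bar R=R/\Ann_R M$, whose $R$-annihilator is again $\Ann_R M$, converts (c) into precisely condition (b) for $M$; hence (b)$\Leftrightarrow$(c). For (d): if $\Supp N\subseteq \Supp M$, then every prime containing $\Ann_R N$ also contains $\Ann_R M$, so (b) for $M$ immediately gives (b) for $N$, and (b)$\Rightarrow$(a) supplies a p-standard system of parameters for $N$; the converse (d)$\Rightarrow$(a) is trivial via $N=M$. The subtlest point I anticipate is the power-raising step that closes (b)$\Rightarrow$(a): the recursion produces $x_i$ lying in the annihilator of quotients by \emph{powers} of the later variables rather than by the variables themselves, so one must appeal carefully to the dd-sequence machinery of Section \ref{section2}---particularly Lemma \ref{lem0}---to convert this into a bona fide p-standard system of parameters of $M$.
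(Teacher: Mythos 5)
Your proof is essentially the same strategy as the paper's: Proposition~\ref{pr5} together with Proposition~\ref{zh} for (a)$\Rightarrow$(b), a descending recursion driven by Proposition~\ref{zh} for the converse, and formal reductions for (c) and (d). It is correct, but you introduce one unnecessary complication and then spend effort worrying about it. In your cut-down module for (a)$\Rightarrow$(b) you use $M':=M/(x_{r+1}^n,\dots,x_d^n)M$ with powers $n$, and then appeal to Lemma~\ref{lem0} and Corollary~\ref{co16} to recover a p-standard system of parameters for $M'$. The paper simply observes that if $x_1,\dots,x_d$ is p-standard on $M$ and $x_{r+1},\dots,x_d\in\fp$, then $x_1,\dots,x_r$ is a p-standard system of parameters of $N:=M/(x_{r+1},\dots,x_d)M$ \emph{by definition} (each condition $x_i\in\fa(M/(x_{i+1},\dots,x_d)M)$ is literally the condition $x_i\in\fa(N/(x_{i+1},\dots,x_r)N)$), whence $\fa(N)$ contains a parameter element and $\dim R/\fa(N)<\dim N$ at once, with no detour through Lemma~\ref{lem0} or Corollary~\ref{co16}.

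The same remark applies to the end of (b)$\Rightarrow$(a), which you flag as the subtlest point. The power-raising worry is self-inflicted: had you cut by $x_d$ rather than $x_d^n$ at each stage, the recursion would directly produce $x_i\in\fa(M/(x_{i+1},\dots,x_d)M)$, which is the definition of a p-standard system, and Lemma~\ref{lem0} would never be needed. Even if you retain the powers, the resolution is a tautology rather than an application of Lemma~\ref{lem0}: each $\fa(\cdot)$ is an ideal, so $x_i\in\fa(M/(x_{i+1}^{n_{i+1}},\dots,x_d^{n_d})M)$ already implies $x_i^{n_i}\in\fa(M/(x_{i+1}^{n_{i+1}},\dots,x_d^{n_d})M)$, and hence $x_1^{n_1},\dots,x_d^{n_d}$ satisfies the definition of a p-standard system of parameters directly. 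Your instinct to worry about the powers is reasonable, but Lemma~\ref{lem0} (which converts between dd-sequences and p-standard systems and presupposes one of those properties) is not the right tool, and invoking it leaves the logical dependence slightly opaque; the paper's version with no powers is cleaner and is the route to prefer.

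Everything else in your argument --- the minimality of $\fq$ in $\Supp M'$ from the dimension count, the propagation of hypothesis (b) from $M$ to $M/x_d M$ via $\Ann_R M\subseteq\Ann_R(M/x_d M)$, and the formal derivations of (c) from (a)$\Leftrightarrow$(b) applied to $R/\Ann_R M$ and of (d) from (b)$\Rightarrow$(a) applied to $N$ --- matches the paper and is sound.
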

\begin{proof}

\noindent{$(a)\Rightarrow (b)$}. Assume that $M$ admits a p-standard system of parameters. Let $S$ be a quotient of $R/\Ann_RM$ which is a domain. We have to show that $\fa(S)\not=0$. 

We take any prime ideal $\fp\in \Supp(M)$ and denote $S=R/\fp$. Set $d=\dim M$ and $r=\dim S$. By Proposition \ref{pr5}, there is a p-standard system of parameters $x_1, \ldots, x_d$ of $M$ such that $x_{r+1}, \ldots, x_d\in \fp$. Denote $N=M/(x_{r+1}, \ldots, x_d)M$. Then 
$\dim S=r=\dim N,$ and hence, $\fp\in \Ass N$. Note that $x_1, \ldots, x_r$ is a p-standard system of parameters of $N$ by definition. In particular, we have $\dim R/\fa(N)<\dim N$. Applying Proposition \ref{zh} to the module $N$ and the prime $\fp\in \Ass N$, we obtain $\dim S/\fa(S)<\dim S$, so $\fa(S)\not=0$.
This proves $(a)\Rightarrow (b)$.
\medskip

\noindent $(b)\Rightarrow (d)$. Let $N$ be any finitely generated $R$-module with $\Supp(N)\subseteq \Supp(M)$. 
Proposition \ref{zh} shows that $\dim R/\fa(N)<\dim N$. So there exists a parameter element $x_t\in \fa(N)$ of $N$, where $t=\dim N$. By induction on $t$, we obtain a system of parameters $x_1, \ldots, x_t$ of $N$ such that $x_i\in \fa(N/(x_{i+1}, \ldots, x_t)N)$ for $i=t, t-1, \ldots, 1$, that is, $x_1, \ldots, x_t$ is a p-standard system of parameters of $N$.
\medskip

\noindent $(d)\Rightarrow (a)$. Straightforward.
\medskip

\noindent $(b)\Leftrightarrow (c)$. This equivalence is a special case of $(a)\Leftrightarrow (b)$.
\end{proof}


\begin{corollary}\label{corollarythem}
Keep the assumptions as in Theorem \ref{main1}. A finitely generated $R$-module $M$ admits a p-standard system of parameters if and only if so does $R/\fp$ for all minimal associated prime ideals $\fp$ of $M$.
\end{corollary}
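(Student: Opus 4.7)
The plan is to deduce the corollary directly from Theorem \ref{main1} by using the characterization of the existence of a p-standard system of parameters through uniform local cohomology annihilators of quotient domains.

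For the forward direction, suppose $M$ admits a p-standard system of parameters, and let $\fp$ be a minimal associated prime of $M$. Then $\fp \in \Supp(M)$, so $\Supp(R/\fp) = V(\fp) \subseteq \Supp(M)$. Applying the implication $(a)\Rightarrow (d)$ of Theorem \ref{main1} with $N = R/\fp$ immediately yields a p-standard system of parameters on $R/\fp$.

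For the backward direction, I would invoke the equivalence $(a)\Leftrightarrow (b)$ of Theorem \ref{main1}: it suffices to show that every quotient domain $S$ of $R/\Ann_R M$ has a uniform local cohomology annihilator. Write $S = R/\fq$ for some prime $\fq \supseteq \Ann_R M$. Since the minimal primes of $\Ann_R M$ are precisely the minimal associated primes of $M$, the prime $\fq$ contains some minimal associated prime $\fp$ of $M$, so $S$ is also a quotient domain of $R/\fp$. By hypothesis $R/\fp$ admits a p-standard system of parameters, and applying $(a)\Rightarrow (b)$ of Theorem \ref{main1} to the module $R/\fp$ over $R$ gives that every quotient domain of $R/\Ann_R(R/\fp) = R/\fp$ has a uniform local cohomology annihilator. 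In particular $S$ has one, as required.

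There is no real obstacle here: the corollary is essentially a bookkeeping exercise on top of Theorem \ref{main1}, and the only thing to be careful about is the identification of the minimal primes of $\Ann_R M$ with the minimal associated primes of $M$, together with the transitivity observation that a quotient domain of $R/\Ann_R M$ is automatically a quotient domain of $R/\fp$ for some such minimal associated prime $\fp$.
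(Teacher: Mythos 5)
Your proposal is correct and takes essentially the same approach as the paper: both directions reduce to Theorem \ref{main1}, with the converse resting on the observation that every prime $\fq \in \Supp(M)$ lies above some minimal associated prime $\fp$ of $M$, so that $R/\fq$ is handled by Theorem \ref{main1} applied to $R/\fp$.
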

\begin{proof}
The necessary condition is immediate from Theorem \ref{main1}. For the converse, let $\fq\in \Supp(M)$ be any  prime ideal. Then there is an minimal associated prime ideal $\fp$ of $M$ such that $\fp\subseteq \fq$. Theorem \ref{main1} then implies that $R/\fq$ admits a p-standard system of parameters. Therefore, $M$ admits a p-standard system of parameters.
\end{proof}

 We now use Theorem \ref{main1} to prove the main result of this section on arithmetic Macaulayfication of module. Recall that a finitely generated $R$-module is unmixed if for all associated prime ideals $P$ of the $\fm$-adic completion $\hat M$, $\dim \hat R/P=\dim M$. This is clearly equivalent to saying that $R/\Ann_R(M)$ is unmixed. 

\begin{theorem}\label{macmodule}
Let $R$ be a Noetherian local ring and $M$ be a finitely generated $R$-modules. The following statements are equivalent:

\begin{enumerate}[(a)]
\item $M$ has an arithmetic Macaulayfication.

\item $M$ is unmixed and for all associated prime ideals $\fp\in \Ass(M)$, $R/\fp$ has an arithmetic Macaulayfication.

\item $M$ is unmixed and admits a p-standard system of parameters.
\end{enumerate}
\end{theorem}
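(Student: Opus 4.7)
The plan is to establish the cycle $(c)\Rightarrow(a)\Rightarrow(b)\Rightarrow(c)$, transferring properties between the module $M$ and the quotient ring $R/\Ann_R M$ via Theorem \ref{main1} and Corollary \ref{corollarythem}, and invoking Kawasaki's theorem \cite{tk2, tk3} which characterizes arithmetic Macaulayfication of a local ring by unmixedness together with Cohen--Macaulayness of all formal fibers.

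For $(c)\Rightarrow(a)$, given $M$ unmixed with a p-standard system of parameters $x_1,\ldots,x_d$, I would first apply Theorem \ref{main1} to transfer the p-standard sop to $S=R/\Ann_R M$; unmixedness of $M$ forces $S$ to be unmixed. By Lemma \ref{lem0}, $x_1,\ldots,x_d$ is a dd-sequence on $M$. Following Kawasaki's recipe for the ring case and its module generalization in \cite{tk3}, an ideal of the form $I=(x_1^{a_1}\cdots x_t^{a_t})$ with rapidly growing exponents and suitable $t$ should yield a Cohen--Macaulay Rees module $\cR(M,I)$. The verification proceeds by depth estimates on the graded pieces $I^nM/I^{n+1}M$, applying the local cohomology machinery of Section \ref{section2} (Theorem \ref{thm12}, Corollary \ref{co13}) to control $H^i_\fm(I^nM/I^{n+1}M)$.

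For $(a)\Rightarrow(b)$, suppose $\cR(M,I)$ is Cohen--Macaulay. Passing to the $\fm$-adic completion preserves Cohen--Macaulayness and commutes with the Rees construction, so $\cR(\hat M,\hat I\hat R)$ is Cohen--Macaulay, hence unmixed as a module over $\cR(\hat I\hat R)$. A graded analysis of associated primes, using the projection of $\cR(\hat M,\hat I\hat R)$ onto its degree-zero component $\hat M$, forces every $P\in\Ass_{\hat R}(\hat M)$ to satisfy $\dim\hat R/P=\dim M$, so $M$ is unmixed. For each $\fp\in\Ass(M)$, the quotient $\cR(M,I)/\fp\cR(M,I)$ yields (after suitable modification) a Cohen--Macaulay Rees module over the domain $R/\fp$, and Kawasaki's ring-level theorem then supplies an arithmetic Macaulayfication of $R/\fp$.

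For $(b)\Rightarrow(c)$, unmixedness of $M$ is given. For each $\fp\in\Ass(M)$, the domain $R/\fp$ has an arithmetic Macaulayfication, so by Kawasaki it has Cohen--Macaulay formal fibers, whence a p-standard system of parameters (the ring-level equivalence also appears in the later Theorem \ref{main3a}). Since $M$ is unmixed, $\Ass(M)$ coincides with the set of minimal primes of $\Ann_R M$, and Corollary \ref{corollarythem} then produces a p-standard system of parameters on $M$. The principal obstacle is the implication $(a)\Rightarrow(b)$: extracting both the unmixedness of $M$ and an arithmetic Macaulayfication of each $R/\fp$ from Cohen--Macaulayness of the Rees module $\cR(M,I)$ requires careful tracking of associated primes through completion and through the graded structure, and the reduction to Kawasaki's ring-level criterion is subtle because $\cR(M,I)$ is a Rees module rather than a Rees algebra over $R/\fp$.
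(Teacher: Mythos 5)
Your cyclic route $(c)\Rightarrow(a)\Rightarrow(b)\Rightarrow(c)$ differs structurally from the paper, which uses $(c)$ as a hub and establishes $(a)\Leftrightarrow(c)$ and $(b)\Leftrightarrow(c)$ separately. Your $(c)\Rightarrow(a)$ is the same idea as the paper's, though the ideal you write, $I=(x_1^{a_1}\cdots x_t^{a_t})$, reads as a principal ideal; if taken literally this cannot work, since $\cR(M,I)\cong M[T]$ for a principal ideal generated by a nonzerodivisor, which is Cohen--Macaulay iff $M$ already is. The paper instead takes $I_j=(x_j,\dots,x_d)$, shows the multi-graded Rees module $\cR(M,I_1,\dots,I_{d-1})$ is Cohen--Macaulay by Kawasaki's argument, and then invokes Chan--Cumming--T\`ai to pass to the single Rees module $\cR(M,I_1\cdots I_{d-1})$. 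Your $(b)\Rightarrow(c)$ is essentially the paper's.

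The genuine gap is in your $(a)\Rightarrow(b)$. The unmixedness half is fine and matches the paper. But you then try to extract an arithmetic Macaulayfication of $R/\fp$ directly from Cohen--Macaulayness of $\cR(M,I)$ by considering $\cR(M,I)/\fp\cR(M,I)$ ``after suitable modification.'' This is not a proof: $\cR(M,I)/\fp\cR(M,I)=\bigoplus_n I^nM/\fp I^nM$ is not the Rees module $\cR(M/\fp M,\,I(R/\fp))$ in general, and even where a comparison can be set up there is no reason for a quotient of a Cohen--Macaulay module to remain Cohen--Macaulay. You acknowledge this is the ``principal obstacle,'' but acknowledging a gap does not close it. The paper sidesteps this difficulty entirely: from Cohen--Macaulayness of $\cR(M,I)$ it localizes at the homogeneous maximal ideal to get a Cohen--Macaulay module $M'$ over a local ring $R'$ with $M$ a quotient of $M'$, then applies Theorem~\ref{main1} to conclude that $M$ admits a p-standard system of parameters, giving $(a)\Rightarrow(c)$; then $(c)\Rightarrow(b)$ follows by applying the already-proved $(a)\Leftrightarrow(c)$ to the rings $R/\fp$ together with Corollary~\ref{corollarythem}. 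You should replace your direct $(a)\Rightarrow(b)$ with this detour through $(c)$ (or independently supply the missing argument that a Cohen--Macaulay Rees module of $M$ forces one on each $R/\fp$, which would require substantially more work than sketched).
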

\begin{proof} 
\noindent{$(a)\Rightarrow (c)$}. Replace $R$ by $R/\Ann_R(M)$, we can assume that $\Ann_R(M)=0$. Let $\cR(M, I)$ be an arithmetic Macaulayfication of $M$, where $I\subset R$ is an ideal with $\htt(I)>0$. 

We first show that $M$ is unmixed. Since $M$ is a faithful $R$-module, this is equivalent to showing that $R$ is unmixed. By flat base change, $\cR(\hat M, I\hat R)$ is a Cohen-Macaulay $\cR(\hat R, I\hat R)$-module. Note that the injective canonical homomorphism $R\hookrightarrow \Hom_R(M, M)$ derives an injective homomorphism $\hat R\hookrightarrow \Hom_{\hat R}(\hat M, \hat M)$ (flat base change). Thus $\hat M$ is a faithful $\hat R$-module. So  $\cR(\hat M, I\hat R)$ is faithful over $\cR(\hat R, I\hat R)$. The Cohen-Macaulayness of  $\cR(\hat M, I\hat R)$ implies that for any associated prime ideal $\mathcal P$ of $\cR(\hat R, I\hat R)$, 
$$\dim \cR(\hat R, I\hat R)/\mathcal P=\dim \cR(\hat R, I\hat R)=\dim \hat R+1.$$
So, $\dim \hat R/P=\dim \hat R$ for any associated prime ideal $P\in\Ass(\hat R)$. This shows that $M$ is unmixed.


Let $R^\prime, M^\prime$ respectively be the localizations of the Rees algebra $\cR(R, I)$ and the Rees module $\cR(M, I)$ at the maximal homogeneous ideal $\fm\oplus\cR(R, I)_+:=\fm\oplus I\oplus I^2\oplus \ldots$.
Note that $R$ is a quotient of the local ring $R^\prime$ and the $R$-module $M$ is a quotient of the Cohen-Macaulay $R^\prime$-module $M^\prime$. Since any regular system of parameters of $M^\prime$ is also p-standard, Theorem \ref{main1} implies that $M$ admits a p-standard system of parameters.  

\medskip

\noindent{$(c)\Rightarrow (a)$}. Let $\dim M=d$ and $x_1, \ldots, x_d\in R$ be a p-standard system of parameters of $M$. Since $M$ is unmixed, then $\dim R/\fp=\dim M$ for all $\fp\in\Ass(M)$, and thus $x_i$'s are regular elements of $M$. Equivalently, $0:_Mx_1=\ldots=0:_Mx_d=0$.

Denote $I_i=(x_i, \ldots, x_d)\subset R$ for $i=1, 2, \ldots, d$. We define the multi-graded Rees module of $M$ with respect to the ideals $I_1, \ldots, I_{d-1}$ by
$$\cR(M, I_1, \ldots, I_{d-1})=\bigoplus_{n_1,\ldots, n_{d-1}\geq 0}I_1^{n_1}\ldots I_{d-1}^{n_{d-1}}MT_1^{n_1}\ldots T_{d-1}^{n_{d-1}},$$
where $T_1, \ldots, T_{d-1}$ are indeterminates. Arguing as in the proof of Corollary $4.5$ of Kawasaki \cite{tk2}, $\cR(M, I_1, \ldots, I_{d-1})$ is a Cohen-Macaulay module over the corresponding multi-graded Rees algebra $\cR(R, I_1, \ldots, I_{d-1})$. Then following Theorem $4.5$ of Chan-Cumming-T\`ai \cite{cct}, the Rees module $\cR(M, I_1\ldots I_{d-1})$ is Cohen-Macaulay which is an arithmetic Macaulayfication of $M$.

\medskip
\noindent{$(b)\Rightarrow (c)$}. If for all $\fp\in \Ass(M)$, $R/\fp$ has an arithmetic Macaulayfication then by the equivalence $(a)\Leftrightarrow (c)$, $R/\fp$ has a p-standard system of parameters. The conclusion then follows by Corollary \ref{corollarythem}.

\medskip

\noindent{$(c)\Rightarrow (b)$}. Assume $M$ is unmixed and admits a p-standard system of parameters. Let $\fp$ be an associated prime ideal of $\Ass(M)$. Then $R/\fp$ is unmixed following the embedding $\hat R/\fp\hat R\hookrightarrow \hat M$. Theorem \ref{main1} tells us that $R/\fp$ has a p-standard system of parameters. Then the equivalence $(a)\Leftrightarrow (c)$ implies $R/\fp$ has an arithmetic Macaulayfication.
\end{proof}

Denote by $\cQ(R)$ the set of $\fp\in \Spec(R)$ such that $R/\fp$ has an arithmetic Macaulayfication. $\cQ(R)$ contains the maximal ideal $\fm$ and all prime ideals $\fp\in \Spec(R)$ with $\dim R/\fp=1$. Indeed, obviously $R/\fp$ is a Cohen-Macaulay ring. So Theorem \ref{macmodule} implies that $R/\fp$ has an arithmetic Macaulayfication since any regular system of parameters is also p-standard.
\begin{corollary}\label{cor42}
 Let $R$ be a Noetherian local ring. A prime ideal $\fp$ is in $\cQ(R)$ if and only if $R/\fp$ admits a p-standard system of parameters. The set $\cQ(R)$ is stable under specialization. Moreover, a finitely generated $R$-module $M$ admits a p-standard systems of parameters if and only if $\Supp(M)\subseteq \cQ(R)$.
\end{corollary}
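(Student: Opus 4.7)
The plan is to deduce all three assertions from Theorem~\ref{macmodule}, Theorem~\ref{main1}, and Corollary~\ref{corollarythem}, bootstrapping the first claim from a careful reading of the proof of Theorem~\ref{macmodule}.

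For the first assertion (characterization of $\cQ(R)$), the forward implication $\fp\in\cQ(R)\Rightarrow R/\fp$ admits a p-standard system of parameters is immediate from Theorem~\ref{macmodule}~$(a)\Rightarrow(c)$. For the converse, suppose $R/\fp$ admits a p-standard system of parameters $x_1,\ldots,x_d$. Since $R/\fp$ is a domain, each $x_i\in\fm$ is nonzero and hence a non-zerodivisor on $R/\fp$, so $0:_{R/\fp}x_i=0$ for every $i$. Inspection of the proof of Theorem~\ref{macmodule}~$(c)\Rightarrow(a)$ shows that the full completion-theoretic unmixedness hypothesis is used only to secure this non-zerodivisor condition; thus the construction of the multi-graded Rees module $\cR(R/\fp, I_1,\ldots,I_{d-1})$ with $I_i=(x_i,\ldots,x_d)$ produces an arithmetic Macaulayfication of $R/\fp$, giving $\fp\in\cQ(R)$.

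For the second assertion, let $\fp\in\cQ(R)$ and $\fq\supseteq\fp$. By the first assertion, $R/\fp$ admits a p-standard system of parameters. Applying Theorem~\ref{main1}~$(a)\Rightarrow(d)$ to $M=R/\fp$, every finitely generated $R$-module with support contained in $V(\fp)$ admits a p-standard system of parameters. Since $\Supp(R/\fq)=V(\fq)\subseteq V(\fp)$, the module $R/\fq$ admits a p-standard system of parameters, and another application of the first assertion gives $\fq\in\cQ(R)$.

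For the third assertion, suppose first that $M$ admits a p-standard system of parameters and let $\fp\in\Supp(M)$; since $\Supp(R/\fp)=V(\fp)\subseteq\Supp(M)$, Theorem~\ref{main1}~$(a)\Rightarrow(d)$ yields a p-standard system of parameters on $R/\fp$, so $\fp\in\cQ(R)$ by the first assertion. Conversely, if $\Supp(M)\subseteq\cQ(R)$, then every minimal associated prime $\fp$ of $M$ belongs to $\cQ(R)$, so by the first assertion $R/\fp$ admits a p-standard system of parameters; Corollary~\ref{corollarythem} then delivers a p-standard system of parameters for $M$ itself. The one delicate point throughout is the backward direction of the first assertion, where one must resist the temptation to verify completion-theoretic unmixedness of the domain $R/\fp$ (which is not obviously automatic from the mere existence of a p-standard system of parameters) and instead exploit the fact that the domain property makes the required non-zerodivisor condition free.
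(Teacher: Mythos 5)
The second and third assertions are handled correctly and in essentially the same way as the paper (via Theorem~\ref{main1} and Corollary~\ref{corollarythem}). The difficulty is the backward direction of the first assertion: you want to conclude that a domain $S=R/\fp$ with a p-standard system of parameters lies in $\cQ(R)$, i.e., has an arithmetic Macaulayfication. Theorem~\ref{macmodule} $(c)\Rightarrow(a)$ requires $S$ to be unmixed, not merely to admit a p-standard system of parameters, and unmixedness of $S$ is a condition on $\hat S$ that the domain property alone does not settle. You try to route around this by asserting, from ``inspection of the proof,'' that the unmixedness hypothesis in $(c)\Rightarrow(a)$ serves only to make the $x_i$ regular on $M$. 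But the substantive part of that proof --- Cohen--Macaulayness of the multi-graded Rees module --- is a black-box citation to Kawasaki's Corollary~4.5, so there is nothing in the paper one can actually inspect to warrant the claim; and since Kawasaki's characterization of arithmetic Macaulayfication is precisely in terms of unmixedness of $\hat S$ (Theorem~\ref{kawasakiCM}), one should expect unmixedness to enter beyond guaranteeing $0:_Mx_i=0$. This is a genuine gap: the step you lean on is unverified and plausibly false as stated.

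The repair is exactly the step you flagged and declined to take. A domain $S$ with a p-standard system of parameters is automatically unmixed, and the verification is short: $\fa(S)\neq 0$, so it contains a nonzero element $x$, which is a non-zerodivisor on $S$ and hence (by faithful flatness) on $\hat S$; since $\fa(S)\hat S\subseteq \fa(\hat S)$, the ideal $\fa(\hat S)$ contains a non-zerodivisor of $\hat S$; if some $P\in\Ass(\hat S)$ had $\dim \hat S/P<\dim S$, Lemma~\ref{am} (applied to $\hat S$) would force $\fa(\hat S)\subseteq P$, contradicting the presence of a non-zerodivisor. So $\hat S$ is unmixed, and Theorem~\ref{macmodule} $(c)\Rightarrow(a)$ applies with its hypothesis fully in force. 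This is the argument used later in the proof of Theorem~\ref{main11} (part a), and it is what the paper's own terse proof of this corollary is silently relying on. Your proposal actively resists it, calling it ``not obviously automatic,'' when in fact it is automatic and it is the load-bearing observation.
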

\begin{proof}
Following Theorem \ref{macmodule}, for a prime ideal $\fp\in \Spec(R)$, $R/\fp$ has an arithmetic Macaulayfication if and only if it admits a p-standard system of parameters. Therefore the first assertion is clear.

Let $\fp\in\cQ(R)$, then $R/\fp$ admits a p-standard system of parameters. Theorem \ref{main1} tells us that for any $\fq\in \Spec(R)$ with $\fp\subseteq \fq$, $R/\fq$ also admits a p-standard system of parameters, hence $\fq\in \cQ(R)$. So $\cQ(R)$ is stable under specialization.

The last assertion is a direct consequence of Theorem \ref{main1}.
\end{proof}

By this corollary, the subset $\cQ(R)$ of $\Spec(R)$ is stable under specialization, so it is natural to ask if it is closed in the Zariski topology or not. The answer to this question is negative in general. If we take the two-dimensional local domain given by Ferrand-Raynaud \cite[Proposition 3.3]{fr} again, denoted $R$, then this domain is not a quotient of a Cohen-Macaulay local ring as it is not universally catenary. Hence
$$\cQ(R)=\{\fp\in \Spec(R): \dim R/\fp\leq 1\},$$
which is not a closed subset of $\Spec(R)$.

\begin{corollary}
Let $R$ be a Noetherian local ring ($R$ might have no p-standard systems of parameters). Let $\mathcal{S}(R)$ be the full subcategory of $\mathcal{M}od(R)$ whose objects are those finitely generated $R$-modules which admit a p-standard system of parameters. For examples, $\mathcal{S}(R)$ contains all $0$- and $1$-dimensional finitely generated $R$-modules. Then $\mathcal{S}(R)$ is an abelian subcategory of $\mathcal{M}od(R)$ which is also a Serre subcategory, that is, for any short exact sequence of $R$-modules in $\mathcal{M}od(R)$,
$$0\longrightarrow N \longrightarrow M\longrightarrow L\longrightarrow 0,$$
$M\in \mathcal S(R)$ if and only if $N, L\in \mathcal S(R)$.

Moreover, $\mathcal S(R)$ is closed under the tensor operator, taking subquotients of modules and applying $\Hom_R(-, R)$.
\end{corollary}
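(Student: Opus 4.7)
The plan is to reduce everything to the following reformulation of membership in $\mathcal{S}(R)$, which is essentially already packaged by the earlier results. Combining the equivalence (a)$\Leftrightarrow$(d) in Theorem \ref{main1} with the set $\cQ(R)$ introduced in Corollary \ref{cor42}, a finitely generated $R$-module $M$ lies in $\mathcal{S}(R)$ if and only if $\Supp(M)\subseteq \cQ(R)$. Once this \emph{support criterion} is in place, the rest of the corollary becomes bookkeeping on how supports behave under the standard module operations.

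For the $0$- and $1$-dimensional case, I would observe that if $\dim M\leq 1$ then every $\fp\in\Supp(M)$ satisfies $\dim R/\fp\leq 1$, so $R/\fp$ is either a field or a one-dimensional local ring; in either situation any system of parameters is automatically p-standard (there is either nothing to check, or a single parameter element which lies in $\fa(R/\fp)=\fm/\fp$). Hence $\fp\in\cQ(R)$, and the support criterion delivers $M\in\mathcal{S}(R)$.

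For the Serre subcategory property, take a short exact sequence $0\to N\to M\to L\to 0$ of finitely generated $R$-modules (finite generation of $N$ and $L$ being automatic from that of $M$ because $R$ is Noetherian) and use $\Supp(M)=\Supp(N)\cup\Supp(L)$. Thus $\Supp(M)\subseteq\cQ(R)$ if and only if both $\Supp(N)\subseteq\cQ(R)$ and $\Supp(L)\subseteq\cQ(R)$, which by the support criterion is precisely $M\in\mathcal{S}(R)\iff N,L\in\mathcal{S}(R)$. Together with the trivial fact that the zero module has empty support and therefore belongs to $\mathcal{S}(R)$, this makes $\mathcal{S}(R)$ a Serre, and hence in particular an abelian, subcategory of $\mathcal{M}od(R)$.

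Closure under the three remaining operations reduces to support inclusions: $\Supp(N)\subseteq\Supp(M)$ whenever $N$ is a subquotient of $M$; $\Supp(M\otimes_R N)=\Supp(M)\cap\Supp(N)\subseteq\Supp(M)$ for finitely generated modules; and $\Supp(\Hom_R(M,R))\subseteq\Supp(M)$, the last coming from the compatibility $\Hom_R(M,R)_\fp\cong\Hom_{R_\fp}(M_\fp,R_\fp)$ for finitely presented $M$. In each case the resulting module is finitely generated and its support remains inside $\cQ(R)$, so the support criterion closes the argument. There is no real obstacle here: once Theorem \ref{main1} and Corollary \ref{cor42} are available, the entire corollary is a purely formal consequence of the fact that $\mathcal{S}(R)$ depends on a module only through its support.
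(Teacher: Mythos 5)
Your proposal is correct and takes essentially the same route as the paper: both reduce everything to the support criterion ``$M\in\mathcal{S}(R)$ iff $\Supp(M)\subseteq\cQ(R)$'' from Corollary \ref{cor42}, then invoke $\Supp(M)=\Supp(N)\cup\Supp(L)$ for the Serre property and $\Supp(M\otimes_R N)=\Supp(M)\cap\Supp(N)$ for the tensor closure. One small slip: for $\dim R/\fp=1$ you write $\fa(R/\fp)=\fm/\fp$, but since $R/\fp$ is a one-dimensional domain we have $H^0_\fm(R/\fp)=0$, so in fact $\fa(R/\fp)=\fa_0(R/\fp)=R$; the conclusion (any parameter element is p-standard) is unaffected, and the rest is fine.
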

\begin{proof}
The corollary is a direct consequence of Theorem \ref{main1} and Corollary \ref{cor42}. In particular, if
$$0\longrightarrow N \longrightarrow M\longrightarrow L\longrightarrow 0$$
is a short exact sequence of $R$-modules then $\Supp(M)=\Supp(N) \cup \Supp(L)$. Thus, by Corollary \ref{cor42}, both $M\in \mathcal S(R)$ and $N, L\in \mathcal S(R)$ are equivalent to $\Supp(M)\subseteq \cQ(R)$. 

Furthermore, if $M, N\in \mathcal S(R)$ then $M\otimes_RN\in \mathcal S(R)$ since $\Supp(M\otimes_RN)=\Supp(M)\cap \Supp(N)\subseteq \cQ(R)$ (cf. Bourbaki \cite[Chapitre II, Proposition 18]{bb}).
\end{proof}


\section{Quotients of Cohen-Macaulay local rings}
\label{section5}

In this section we apply results in previous sections to explore a relationship between p-standard system of parameters and quotient of Cohen-Macaulay local ring. This could be viewed as a generalization formulation of the well-known fact that Cohen-Macaulayness is characterized by certain local cohomology vanishing.

We first recall the following characterization of quotients of Cohen-Macaulay rings by Kawasaki \cite[Corollary 1.2]{tk2}.

\begin{theorem}[Kawasaki]\label{kawasakiCM}
A Noetherian local ring is a quotient of a Cohen-Macaulay local ring if and only if it is universally catenary and all its formal fibers are Cohen-Macaulay.
\end{theorem}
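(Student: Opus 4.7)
The plan is to prove the two directions separately. The forward implication relies on standard inheritance properties; the backward implication is the substantial content, requiring an explicit construction of a Cohen-Macaulay overring via an arithmetic Macaulayfication.

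For the forward implication, suppose $R = S/I$ with $(S, \fn)$ a Cohen-Macaulay local ring. Universal catenaricity is inherited: polynomial rings over $S$ remain Cohen-Macaulay, hence catenary, so $S$ is universally catenary, and this property passes to the quotient. For the formal fibers, one observes that for $\fp \in \Spec(R)$ with preimage $\fq \in \Spec(S)$,
$$\hat R \otimes_R \kappa(\fp) \cong \hat S \otimes_S \kappa(\fq),$$
so the task reduces to showing that every Cohen-Macaulay local ring has Cohen-Macaulay formal fibers. This is non-trivial but classical, and can be deduced from the existence of a dualizing complex on $\hat S$ (available by Cohen's structure theorem, since $\hat S$ is a quotient of a complete regular local ring) together with Grothendieck-type control of fibers via annihilators of local cohomology of the dualizing complex.

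For the backward implication, assume $R$ is universally catenary with Cohen-Macaulay formal fibers. The first and decisive step is to show that these hypotheses produce a p-standard system of parameters on $R$: since $\hat R$ is a quotient of a complete regular local ring, $\hat R$ admits a dualizing complex and hence has a p-standard system of parameters by earlier results cited in the introduction; universal catenaricity and Cohen-Macaulayness of the fibers of $R \to \hat R$ then allow one to descend such a system to $R$, by relating the annihilator ideals $\fa_i(R)$ to $\fa_i(\hat R)$ through the faithfully flat extension. Once a p-standard system $x_1, \ldots, x_d$ of $R$ is in hand, one imitates the construction in the proof of Theorem \ref{macmodule}: setting $I_i = (x_i, \ldots, x_d)$, the Rees algebra $\cR(R, I_1 \cdots I_{d-1})$ is Cohen-Macaulay, and its localization at the unique maximal homogeneous ideal is a Cohen-Macaulay local ring having $R$ as a quotient by its irrelevant ideal.

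The main obstacle is the descent step in the backward direction: passing from a p-standard system of parameters on $\hat R$ to one on $R$. This requires tracking how local cohomology annihilators and depth behave through the faithfully flat map $R \to \hat R$, and is precisely where both hypotheses are used in an essential way (universal catenaricity to keep dimensions aligned under specialization, Cohen-Macaulay fibers to ensure that depth and annihilator information transfers faithfully between $R_\fp$ and its completion). After this descent, the Cohen-Macaulayness of the Rees algebra and the identification of $R$ as a quotient of its localization are both direct adaptations of arguments already developed earlier in the paper.
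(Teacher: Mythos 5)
The paper does not prove this theorem; it is quoted as Kawasaki's result (\cite[Corollary~1.2]{tk2}) and used as a black box in Section~\ref{section5}. So there is no in-paper proof to compare against, and what follows is a review of your sketch on its own terms.

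The forward direction is essentially fine. Your reduction to ``a Cohen-Macaulay local ring has Cohen-Macaulay formal fibers'' is valid, and the reduced statement does follow directly from the flat local formula (for a flat local map $S_\fp\to \hat S_\fq$ with both source and target Cohen-Macaulay, the formula $\dim \hat S_\fq = \dim S_\fp + \dim(\hat S_\fq/\fp\hat S_\fq)$ and its depth analogue force the closed fiber to be Cohen-Macaulay). Your detour through dualizing complexes is unnecessary here, but the argument is repairable.

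The backward direction has a genuine gap. Your final step is to take a p-standard system of parameters of $R$ and conclude that the Rees algebra $\cR(R, I_1\cdots I_{d-1})$ is Cohen-Macaulay, invoking the construction of Theorem~\ref{macmodule}. But Theorem~\ref{macmodule} requires the module to be \emph{unmixed}, and its implication $(a)\Rightarrow(c)$ shows that a Cohen-Macaulay Rees module forces unmixedness, so this is not an incidental hypothesis. A universally catenary local ring with Cohen-Macaulay formal fibers need not be unmixed: $R = k[[x,y]]/(x^2, xy)$ is complete, a quotient of a regular local ring, hence satisfies both hypotheses, yet $(x,y)$ is an embedded associated prime with $\dim R/(x,y)=0 < 1 = \dim R$. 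For this $R$ no ideal $I$ can make $\cR(R,I)$ Cohen-Macaulay, so ``$R$ is a quotient of the localized Rees algebra'' does not produce the Cohen-Macaulay overring; the theorem is true for this $R$ only because $R$ happens to be a quotient of $k[[x,y]]$, which your construction does not see. Separately, the step you identify as ``decisive'' --- descending a p-standard system of parameters from $\hat R$ to $R$ --- is asserted rather than argued. The containment $\fa_i(R)\hat R \subseteq \fa_i(\hat R)$ is automatic and points the wrong way; what is needed is $\dim R/\fa(R)<\dim R$, and this is exactly what fails in the Ferrand--Raynaud example (where $\fa(R)=0$ even though $\fa(\hat R)$ is large). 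Turning ``the hypotheses allow one to descend'' into an actual argument is where the real content of Kawasaki's proof lies, and the sketch does not supply it.
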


This theorem is very interesting as it relates a property on a ring to a property on the formal fibers of the ring, a classical topic in commutative algebra (see \cite{gr}). It will be used effectively in our proof of Theorem \ref{main3a}. The first two statements of Theorem \ref{main3a} being equivalent is proved in the following theorem.

\begin{theorem}\label{main11}
A Noetherian local ring is a quotient of a Cohen-Macaulay local ring if and only if it has a p-standard system of parameters. 
\end{theorem}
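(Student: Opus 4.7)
The plan is to establish the theorem via Kawasaki's Theorem \ref{kawasakiCM}, which identifies ``quotient of a Cohen--Macaulay local ring'' with ``universally catenary with Cohen--Macaulay formal fibers''. It therefore suffices to prove that $R$ has a p-standard system of parameters if and only if $R$ is universally catenary and all its formal fibers are Cohen--Macaulay.

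For the direction that quotients of Cohen--Macaulay rings admit a p-standard system of parameters, I would write $R=S/I$ with $(S,\fn)$ Cohen--Macaulay local. Any sop of $S$ is a regular sequence, so $H^i_\fn(S)=0$ for $i<\dim S$, whence $\fa(S)=S$ and every sop of $S$ is trivially p-standard. Viewing $R$ as a finitely generated $S$-module with $\Supp_S(R)\subseteq \Spec S=\Supp_S(S)$, Theorem \ref{main1}$(a)\Rightarrow(d)$ produces a p-standard system of parameters on $R$ as an $S$-module. Since $H^i_\fm(R)$ coincides whether computed over $S$ or over $R$, and $S$-annihilators correspond to $R$-annihilators under the surjection $S\twoheadrightarrow R$, this sequence descends to a p-standard system of parameters of the local ring $R$.

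For the converse, assume $R$ admits a p-standard sop $x_1,\ldots,x_d$. Since each $H^i_\fm(R)$ is $\fm$-torsion and $R\to\hat R$ is faithfully flat, $H^i_\fm(R)\otimes_R\hat R=H^i_\fm(R)$, so $\fa_i(R)\hat R\subseteq\fa_i(\hat R)$; hence $x_1,\ldots,x_d$ remains p-standard on $\hat R$. To prove $R$ is universally catenary I would invoke Ratliff's criterion: $R$ is universally catenary iff $\widehat{R/\fp}$ is equidimensional for every $\fp\in\Spec R$. By Theorem \ref{main1}, each quotient domain $S:=R/\fp$ has a p-standard sop, and so does $\hat S$ by the same reasoning. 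When $\dim S=0$, $S$ is a field and equidimensionality is trivial. When $\dim S\geq 1$, Corollary \ref{co16} gives $\dim S/\fa(S)<\dim S$, so there is a nonzero $a\in\fa(S)\subseteq\fa(\hat S)$. If $Q$ were a minimal prime of $\hat S$ with $\dim \hat S/Q<\dim\hat S$, then $Q\in\Ass(\hat S)$ and Lemma \ref{am} would force $a\in Q$. But going down along the flat injection $S\hookrightarrow\hat S$ forces $Q\cap S$ to be a minimal prime of $S$, hence $(0)$; so $a=0$, a contradiction. Thus $\hat S$ is equidimensional and $R$ is universally catenary.

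The main obstacle is showing that all formal fibers of $R$ are Cohen--Macaulay, i.e.\ that $\hat R_\fq/\fp\hat R_\fq$ is Cohen--Macaulay for every $\fp\in\Spec R$ and every $\fq\in\Spec\hat R$ lying over $\fp$. My strategy would be to apply Proposition \ref{pr5} to produce a p-standard sop of $\hat R$ whose tail lies in $\fq$ and restricts to a p-standard sop of $\hat R_\fq$, and then to combine Proposition \ref{pro15} and Corollary \ref{co16} with the universal catenarity just established (and its consequence that $\hat R/\fp\hat R$ is equidimensional) to extract a maximal regular sequence on $\hat R_\fq/\fp\hat R_\fq$ realizing its full dimension. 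Once universal catenarity and Cohen--Macaulayness of the formal fibers are both in hand, Kawasaki's Theorem \ref{kawasakiCM} concludes that $R$ is a quotient of a Cohen--Macaulay local ring.
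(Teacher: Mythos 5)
Your overall architecture matches the paper's: reduce via Kawasaki's Theorem \ref{kawasakiCM} to proving that the existence of a p-standard system of parameters is equivalent to universal catenarity plus Cohen--Macaulay formal fibers. The ``necessary'' direction is identical to the paper's (regular sop on a Cohen--Macaulay $S$ is p-standard, then apply Theorem \ref{main1}). Your universal catenarity argument is correct and essentially the paper's in a slightly different dress: you check equidimensionality of $\widehat{R/\fp}$ directly via going-down, whereas the paper notes that every nonzero element of $\fa(R/\fp)$ is a nonzerodivisor of $\widehat{R/\fp}$ and concludes unmixedness from Lemma \ref{am}. Both are fine. Also, your observation that $\fa_i(R)\hat R\subseteq\fa_i(\hat R)$ (so the given p-standard sop passes to $\hat R$) is a legitimate alternative to the paper's invocation of the Cohen structure theorem at that step.

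The genuine gap is the formal-fibers part, which you yourself flag as ``the main obstacle'' and leave as an unverified strategy. The phrase ``extract a maximal regular sequence on $\hat R_\fq/\fp\hat R_\fq$ realizing its full dimension'' is not an argument: a p-standard sop of $\hat R_\fq$ produced by Proposition \ref{pr5} says nothing directly about the quotient $\hat R_\fq/\fp\hat R_\fq$, and equidimensionality of $\hat R/\fp\hat R$ does not by itself yield Cohen--Macaulayness of its localizations. What is missing is the actual mechanism that makes Corollary \ref{co16} bite, namely that the localizing prime does not contain the relevant annihilator ideal. Concretely, after replacing $R$ by the domain $R/\fp$ one must show that any $\hat\fp\in\Spec\hat R$ lying over $(0)$ satisfies $\hat\fp\not\supseteq\fa(\hat R)$; this follows because $\fa(R)\hat R\subseteq\fa(\hat R)$ and $\fa(R)\neq 0$, so $\hat\fp\supseteq\fa(\hat R)$ would force $\hat\fp\cap R\supseteq\fa(R)\neq 0$, contradicting $\hat\fp\cap R=(0)$. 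Only then does Corollary \ref{co16} (applied to $\hat R$, which admits a p-standard sop) give that $\hat R_{\hat\fp}$ is Cohen--Macaulay, and the reduction to $\fp=0$ is exactly what makes $\hat R_{\hat\fp}=\hat R_{\hat\fp}/\fp\hat R_{\hat\fp}$ the local ring of the fiber. Without this reduction and the non-containment step, your sketch does not close.
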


\begin{proof}
\noindent {\it Necessary condition:} Assume that $R$ is a quotient of a Cohen-Macaulay local ring $S$. Note that any regular system of parameters of $S$ is also a p-standard system of parameters. So any finitely generated $S$-module admits a p-standard system of parameters by Theorem \ref{main1}(d). In particular, the quotient $R$ admits a p-standard system of parameters.

\medskip
\noindent {\it Sufficient condition:} Assume that $R$ admits a p-standard system of parameters. Following Theorem \ref{kawasakiCM}, in order to prove the sufficient condition, we will show that the ring is universally catenary and all its formal fibers are Cohen-Macaulay.

a. Let $\fp\in \Spec(R)$ and set $S=R/\fp$. We have $\fa(S)\not=0$ by Theorem \ref{main1}$(b)$. Note that any non-zero divisor of $S$ is also a non-zero divisor of $\hat S$ since the completion is faithfully flat, then any non-zero element of $\fa(S)$ is not a zero divisor of $\hat S$. Note also that $\fa(S)\hat S\subseteq \fa(\hat S)$ by the isomorphism $H^i_\fm(S)\otimes_S\hat S\simeq H^i_{\fm\hat S}(\hat S)$ for all $i\geq 0$. Hence  there are elements in $\fa(\hat S)$ which are not zero divisors of $\hat S$. Combining this simple observation with Lemma \ref{am}, we get that $\hat S$ is unmixed. Therefore $R$ is universally catenary by Grothendieck \cite[Proposition 7.1.11]{gr} (see also Matsumura \cite[Theorem 31.7]{ma}). 

b. Let $\fq\in \Spec (R)$ be any prime ideal. We need to show that the fiber ring $\hat R\otimes_Rk(\fq)$ is Cohen-Macaulay where $k(\fq)=R_\fq/\fq R_\fq$. Since the canonical map $R/\fq \longrightarrow \hat R/\fq\hat R$ provides the completion of $R/\fq$ and $R/\fq$ also admits a p-standard system of parameters by Theorem \ref{main1}, we reduce to the case $R$ is a domain and $\fq=0$. 

Let $\hat\fp\in \Spec(\hat R)$ such that $\hat \fp\cap R=0$. Denote by $K$ the field of fractions of $R$. Combining the fact $\fa(R)\hat R\subseteq \fa(\hat R)$ as prove in (a) with the assumption $\dim R/\fa(R)<\dim R$, we imply that $\dim \hat R/\fa(\hat R)<\dim R$. Therefore, $\hat \fp\not\supseteq \fa(\hat R)$. By the structure theorem for Noetherian complete local rings \cite[Theorem 29.4]{ma}, the $\fm$-adic completion $\hat R$ is a quotient of a regular local ring, in particular, a Cohen-Macaulay local ring. So by the necessary condition, $\hat R$ admits a p-standard system of parameters. It implies from Corollary \ref{co16} that $\hat R_{\hat \fp}$ is Cohen-Macaulay. Finally, for any $\fp^\prime \in \Spec(\hat R\otimes_RK)$, set $\hat\fp=\fp^\prime\cap \hat R$, then $\hat \fp\cap R=0$.  Since $(\hat R\otimes_RK)_{\fp^\prime}\simeq \hat R_{\hat \fp}\otimes_K K\simeq \hat R_{\hat \fp}$ is Cohen-Macaulay, $\hat R\otimes_R K$ is Cohen-Macaulay.
\end{proof}

The module version  of Theorem \ref{main11} is as follows.

\begin{corollary} Let $M$ be a finitely generated $R$-module. Then $M$ admits a p-standard system of parameters if and only if $R/\Ann_R(M)$ is a quotient of a local ring $R^\prime$ and $M$ is a quotient of a finitely generated Cohen-Macaulay $R^\prime$-module.
\end{corollary}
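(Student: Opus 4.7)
The plan is to reduce both directions to the ring case handled by Theorem \ref{main11}, using the support-only criterion of Theorem \ref{main1}.

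For the forward direction, I would first invoke Theorem \ref{main1} to transfer a p-standard system of parameters on $M$ to one on the ring $R/\Ann_R(M)$, and then apply Theorem \ref{main11} to realize $R/\Ann_R(M)$ as a quotient of some Cohen-Macaulay local ring $R^\prime$. Viewing $M$ as a finitely generated $R^\prime$-module through the composition $R^\prime \twoheadrightarrow R/\Ann_R(M)$, a finite free presentation $(R^\prime)^n \twoheadrightarrow M$ exhibits $M$ as a quotient of the finitely generated Cohen-Macaulay $R^\prime$-module $(R^\prime)^n$, as desired.

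For the converse, suppose $M = N/L$ with $N$ a finitely generated Cohen-Macaulay $R^\prime$-module and $R^\prime$ a local ring surjecting onto $R/\Ann_R(M)$. Any system of parameters of $N$ over $R^\prime$ is $N$-regular, hence trivially p-standard, so by Theorem \ref{main1} and the inclusion $\Supp_{R^\prime}(M) \subseteq \Supp_{R^\prime}(N)$, the module $M$ admits a p-standard system of parameters $y_1, \ldots, y_d \in \mathfrak{m}^\prime$ viewed over $R^\prime$. To transport this to $R$, I would pick lifts $x_i \in R$ of the images of $y_i$ under $R^\prime \twoheadrightarrow R/\Ann_R(M)$ and verify a change-of-rings compatibility: for every subquotient $L$ of $M$, the modules $H^j_{\mathfrak{m}^\prime}(L)$ and $H^j_\fm(L)$ coincide, since both scalar actions factor through the common quotient $R/\Ann_R(M)$. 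Consequently the annihilator ideals $\fa_j$ and their product $\fa$ computed over $R^\prime$ correspond correctly to those computed over $R$, which turns the defining conditions $y_i \in \fa(M/(y_{i+1}, \ldots, y_d)M)$ into the required $x_i \in \fa(M/(x_{i+1}, \ldots, x_d)M)$. I expect this last bookkeeping step to be the only delicate point; everything else follows essentially formally from Theorems \ref{main1} and \ref{main11}.
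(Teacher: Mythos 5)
Your argument is correct and follows the same route as the paper: reduce to $R/\Ann_R(M)$, use Theorem \ref{main1} to pass between $M$ and the ring, apply Theorem \ref{main11} to produce the Cohen-Macaulay local ring $R'$, and observe that the p-standard condition is intrinsic to the $R/\Ann_R(M)$-module structure. The paper states the converse more tersely (``it is clear from Theorem \ref{main1}''), whereas you spell out the change-of-rings compatibility for local cohomology annihilators; this is the same reasoning made explicit, and that bookkeeping step indeed goes through as you describe.
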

\begin{proof} Since $M$ is finitely generated, there is an surjective homomorphism $\varphi: (R/\Ann_R(M))^r\rightarrow M$. Replacing $R$ by $R/\Ann_R(M)$ if it is necessary, we assume that $\Ann_R(M)=0$.

If $M$ admits a p-standard system of parameters then $R$ also admits a p-standard system of parameters by Theorem \ref{main1}. Theorem \ref{main11} then implies that $R$ is a quotient of a Cohen-Macaulay local ring $R^\prime$. The composition $(R^\prime)^r\rightarrow R^r\rightarrow M$ then implies that $M$ is a quotient of a Cohen-Macaulay $R^\prime$-module.

Conversely, assume $R$ is a quotient of a local ring $R^\prime$ and $M$ is a quotient of a finitely generated Cohen-Macaulay $R^\prime$-module. Here $R^\prime$ is a Noetherian local ring and $R$ is a quotient of $R^\prime$. Then it is clear from Theorem \ref{main1} that $M$ admits a p-standard system of parameters over $R$.
\end{proof}

Theorem \ref{main11} has several interesting consequences. The first corollary is implied from Theorem \ref{macmodule} and Theorem \ref{main11}.

\begin{corollary}
Let $M$ be a finitely generated and faithful module over a Noetherian local ring $R$. Then an arithmetic Macaulayfication of $M$ exists if and only if $M$ is unmixed and $R$ is a quotient of a Cohen-Macaulay local ring.
\end{corollary}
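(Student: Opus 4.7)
The plan is to assemble this corollary from three previously established equivalences, all appearing earlier in the excerpt. The overall strategy is to turn the conclusion of Theorem \ref{macmodule} (a condition phrased in terms of a p-standard system of parameters on the module $M$) into a condition phrased in terms of the ring $R$ being a quotient of a Cohen-Macaulay local ring, using faithfulness to pass between the two.

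First I would invoke Theorem \ref{macmodule} (specifically the equivalence of (a) and (c)) to replace the existence of an arithmetic Macaulayfication of $M$ by the conjunction of two conditions: $M$ is unmixed, and $M$ admits a p-standard system of parameters. The unmixedness condition already matches one half of what we want to prove, so the remaining task is purely to show that, under the faithfulness hypothesis, existence of a p-standard system of parameters on $M$ is equivalent to $R$ being a quotient of a Cohen-Macaulay local ring.

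Next I would use the hypothesis that $M$ is faithful, i.e.\ $\Ann_R(M) = 0$, so that $R/\Ann_R(M) = R$. Applying the equivalence $(a) \Leftrightarrow (c)$ of Theorem \ref{main1} with this identification, $M$ admits a p-standard system of parameters if and only if $R$ itself does. Finally, Theorem \ref{main11} says that $R$ admits a p-standard system of parameters if and only if $R$ is a quotient of a Cohen-Macaulay local ring. Chaining these yields exactly the stated equivalence.

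There is no real technical obstacle here, since the corollary is essentially a two-step concatenation of Theorem \ref{macmodule}, Theorem \ref{main1}, and Theorem \ref{main11}; the only substantive input is recognizing that faithfulness is precisely what allows one to drop the annihilator and transfer the p-standard condition from the module to the ambient ring. Thus the proof is a short synthesis rather than a new argument.
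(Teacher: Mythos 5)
Your proposal is correct and follows essentially the same route as the paper, which deduces the corollary directly from Theorems \ref{macmodule} and \ref{main11}; you simply make explicit the intermediate use of faithfulness and Theorem \ref{main1} to transfer the p-standard condition from $M$ to $R$, a step the paper leaves implicit.
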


The locus of non-Cohen-Macaulay points of the spectrum of a Noetherian ring (also, of the support of a finitely generated module on it) keeps many information about the structure of the ring (respectively, the module). In the next corollary, we will give a relation between this set and the annihilator ideals of certain local cohomology modules of module who admit a p-standard system of parameters. This is a remarkable generalization of a result of Schenzel \cite[2.4.6]{sch}. Recall first that for any finitely generated $R$-module $M$, following \cite[Remark 2.3]{cc1} there is a filtration of submodules of $M$,
$$D_0\subset \ldots \subset D_{t-1}\subset D_t=M,$$
where each $D_{i-1}$ is the biggest submodule of $D_i$ with $\dim D_{i-1}<\dim D_i$, $i=t, \ldots, 1$ and $D_0=H^0_\fm(M)$-the biggest submodule of finite length of $M$. This filtration always exists uniquely and is called the dimension filtration of $M$.
\begin{corollary} Suppose $R$ is a quotient of a Cohen-Macaulay local ring. Denote $\nCM(M):=\{\fp\in \Supp(M): M_\fp \text{ is not Cohen-Macaulay}\}$. We have $$\nCM(M)=\bigcup_{i=0}^t\big(V(\fa(D_i))\cap\Supp(D_i/D_{i-1})\big).$$
Consequently, $\nCM(M)$ is a closed subset of $\Supp(M)$ (with the Zariski topology). 
\end{corollary}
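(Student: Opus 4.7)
The plan is to apply Corollary \ref{co16} to each module $D_i$ in the dimension filtration. This is legitimate because every finitely generated $R$-module carries a p-standard system of parameters under our hypothesis: Theorem \ref{main11} supplies one for $R$, and then Theorem \ref{main1}(d) transports it to each $D_i$. Moreover, by Theorem \ref{kawasakiCM}, $R$ is universally catenary. For each $i$, Corollary \ref{co16} identifies $V(\fa(D_i))$ with $\{\fp\in\Supp(D_i):\depth(D_i)_\fp+\dim R/\fp<\dim D_i\}$, and guarantees that outside $V(\fa(D_i))$ the localization $(D_i)_\fp$ is Cohen--Macaulay of dimension $\dim D_i-\dim R/\fp$.

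For the inclusion $\nCM(M)\supseteq\bigcup_i V(\fa(D_i))\cap \Supp(D_i/D_{i-1})$, I would prove its contrapositive. Take $\fp\in\Supp(M)$ lying outside $V(\fa(D_i))\cap\Supp(D_i/D_{i-1})$ for every $i$, and let $k$ be the largest index with $\fp\in\Supp(D_k/D_{k-1})$. Then $(D_j/D_{j-1})_\fp=0$ for $j>k$, so $M_\fp=(D_t)_\fp=(D_k)_\fp$. The choice of $k$ gives $\fp\notin V(\fa(D_k))$ while $\fp\in\Supp(D_k)$, so the previous paragraph yields that $M_\fp=(D_k)_\fp$ is Cohen--Macaulay.

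The reverse inclusion is the main obstacle. Suppose $\fp\in V(\fa(D_i))\cap\Supp(D_i/D_{i-1})$ and assume toward a contradiction that $M_\fp$ is Cohen--Macaulay. The definition of the dimension filtration makes $D_i/D_{i-1}$ equidimensional of dimension $d_i:=\dim D_i$, and combining this with universal catenarity of $R$ yields the standard identification $\Ass(D_i/D_{i-1})=\{\fq\in\Ass(M):\dim R/\fq=d_i\}$. The Cohen--Macaulayness of $M_\fp$ forces every $\fq\in\Ass(M)$ with $\fq\subseteq\fp$ to satisfy $\dim R_\fp/\fq R_\fp=\dim M_\fp$; combined with $\dim R/\fq=\dim R/\fp+\dim R_\fp/\fq R_\fp$ (valid by universal catenarity applied to the domain $R/\fq$) and applied to any $\fq\in\Ass(D_i/D_{i-1})\subseteq\Ass(M)$ contained in $\fp$ (which exists because $\fp\in\Supp(D_i/D_{i-1})$), one obtains $\dim M_\fp=d_i-\dim R/\fp$. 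The same formula then forces every associated prime of $M$ contained in $\fp$ to have dimension $d_i$, hence to lie in $\Ass(D_i/D_{i-1})$. Consequently $\fp\notin\Supp(D_j/D_{j-1})$ for $j\neq i$, which collapses the localized filtration to $M_\fp=(D_i)_\fp$ with $(D_{i-1})_\fp=0$. Therefore $\depth(D_i)_\fp=\dim M_\fp=d_i-\dim R/\fp$, contradicting $\fp\in V(\fa(D_i))$.

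Finally, closedness of $\nCM(M)$ in $\Supp(M)$ is automatic: each $V(\fa(D_i))$ is closed in $\Spec(R)$, each $\Supp(D_i/D_{i-1})=V(\Ann_R(D_i/D_{i-1}))$ is closed, and a finite union of closed intersections is closed.
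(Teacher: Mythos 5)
Your proof is correct and follows essentially the same route as the paper: it reduces to Corollary \ref{co16} applied to each $D_i$ (justified via Theorems \ref{main11} and \ref{main1}(d)), exploits $\Ass(D_i/D_{i-1})=\{\fq\in\Ass(M):\dim R/\fq=\dim D_i\}$ together with catenaricity, and identifies $M_\fp=(D_{i})_\fp$ for the relevant index. The paper handles both inclusions simultaneously by fixing the largest $i$ with $\fp\in\Supp(D_i/D_{i-1})$ and observing an iff, whereas you separate the two inclusions and argue the harder one by contradiction; the content is the same.
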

\begin{proof}
Let $\fp\in \Supp(M)$ and $i=0, \ldots, t$ such that $\fp\in \Supp(D_i/D_{i-1})$ but $\fp\not\in \Supp(D_{j}/D_{j-1})$ for $j=i+1, \ldots, t$. Such an $i$ exists since $\Supp(M)=\cup_{j=0}^t\Supp(D_j/D_{j-1})$ where we put $D_{-1}=0$. It is obvious that $M_\fp=(D_i)_\fp$. Note that $\Ass(D_i/D_{i-1})=\{\fq\in \Ass(D_i): \dim R/\fq=\dim D_i\}$ by the choice of the filtration (see \cite[Remark 2.3]{cc1}) and the catenaricity of $R$, this shows that $\dim (D_i)_\fp+\dim R/\fp=\dim D_i$. So $(D_i)_\fp$ is Cohen-Macaulay if and only if $\depth (D_i)_\fp+\dim R/\fp=\dim D_i$. Using Theorem \ref{main1} and Corollary \ref{co16}, this is equivalent to saying that $\fp\not\supseteq \fa(D_i)$. Therefore, 
$$\nCM(M)=\bigcup_{i=0}^t(V(\fa(D_i))\cap\Supp(D_i/D_{i-1})),$$
which is closed in $\Supp(M)$.
\end{proof}

A Noetherian local ring $R$ is called a generalized Cohen-Macaulay ring (see \cite{cst}) if the local cohomology modules $H^i_\fm(R)$'s are finitely generated $R$-modules for $i=0, 1, \ldots, \dim R-1$, or equivalently, if $\dim R/\fa(R)\leq 0$. Hence any system of parameters of a generalized Cohen-Macaulay local ring with a big enough exponent is a p-standard system of parameters. 
From Theorem \ref{main11} we have the following corollary.

\begin{corollary}\label{p1}
Let $R$ be a Noetherian local ring. Assume that $\dim R/\fa(R)\leq 1$. Then $R$ is a quotient of a Cohen-Macaulay local ring. In particular, any generalized Cohen-Macaulay ring is a quotient of a Cohen-Macaulay local ring.
\end{corollary}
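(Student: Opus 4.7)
The plan is to invoke Theorem \ref{main11}, which reduces the statement to showing that $R$ admits a p-standard system of parameters. I would argue by induction on $d=\dim R$. The base cases $d\leq 1$ are handled directly: when $d=0$ the empty sequence is p-standard, and when $d=1$ the module $H^0_\fm(R)$ has finite length in a $1$-dimensional Noetherian local ring, so $\fa(R)=\fa_0(R)=\Ann_R H^0_\fm(R)$ is $\fm$-primary and therefore contains a parameter element, giving a p-standard system of parameters of length one.

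For the inductive step, suppose $d\geq 2$. Since $\dim R/\fa(R)\leq 1<d$, the ideal $\fa(R)$ is not contained in any minimal prime of dimension $d$, so I can choose a parameter element $x_d\in \fa(R)$. Set $S=R/x_dR$, which has Krull dimension $d-1$. Lemma \ref{lem3.1} gives $\fa(R)^{d-1}\subseteq \fa(S)$ (after passing to $S$), and since raising to a positive power does not enlarge the vanishing locus, one obtains
\[
\dim S/\fa(S)\leq \dim R/(\fa(R)^{d-1}+x_dR)\leq \dim R/\fa(R)\leq 1.
\]
The induction hypothesis applies to $S$, producing a p-standard system of parameters $\bar x_1,\dots,\bar x_{d-1}$ of $S$. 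Lifting each $\bar x_i$ to an element $x_i\in R$ yields a system of parameters $x_1,\dots,x_d$ of $R$. It is p-standard because $x_d\in \fa(R)$ by construction, while for $i<d$ one has the canonical identification $R/(x_{i+1},\dots,x_d)R=S/(\bar x_{i+1},\dots,\bar x_{d-1})S$, so the p-standard conditions for the barred sequence over $S$ transcribe directly into the required conditions $x_i\in \fa(R/(x_{i+1},\dots,x_d)R)$ over $R$.

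The final assertion about generalized Cohen-Macaulay rings is an instance of the first part: if each $H^i_\fm(R)$ with $i<d$ is finitely generated then it is $\fm$-power torsion and finitely generated, hence of finite length, so every $\fa_i(R)$ with $i<d$ is $\fm$-primary. Consequently the product $\fa(R)=\fa_0(R)\cdots\fa_{d-1}(R)$ is $\fm$-primary and $\dim R/\fa(R)=0\leq 1$, allowing the first part of the corollary to apply. The only delicate point in the whole argument is controlling $\dim S/\fa(S)$ across the induction, and this is cleanly dispatched by Lemma \ref{lem3.1} together with the observation that $\sqrt{\fa(R)^{d-1}}=\sqrt{\fa(R)}$; everything else is bookkeeping on the definition of a p-standard system of parameters.
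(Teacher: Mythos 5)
Your approach is by induction on $d=\dim R$, reducing to $S=R/x_dR$ where $x_d\in\fa(R)$ is a parameter element; the paper instead constructs the p-standard system of parameters directly by choosing $x_2,\dots,x_d\in\fa(R)$ as a part of a system of parameters (possible because $\htt\fa(R)\geq d-1$), taking $x_1\in\Ann_R H^0_\fm(R/(x_2,\dots,x_d))$, and then raising to powers $r\geq d!$; Lemma \ref{lem3.1} is the common ingredient. Your inductive reduction is conceptually clean, and the bookkeeping $\dim S/\fa(S)\leq 1$ via Lemma \ref{lem3.1} is correct.

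However, the final ``transcription'' step has a genuine gap. You identify $T_i:=R/(x_{i+1},\dots,x_d)R=S/(\bar x_{i+1},\dots,\bar x_{d-1})S$ and claim that $\bar x_i\in\fa_S(T_i)$ immediately gives $x_i\in\fa_R(T_i)$ for an arbitrary lift $x_i$. This is not automatic: if $\pi\colon R\to S$ is the quotient map, one has $\Ann_R H^j_\fm(T_i)=\pi^{-1}(\Ann_S H^j_\fm(T_i))$ for each $j$, but the ideal $\fa$ is a \emph{product} of these annihilators, and preimages do not commute with products. Concretely $\pi^{-1}(\fa_S(T_i))=\fa_R(T_i)+x_dR$, so $\bar x_i\in\fa_S(T_i)$ only yields $x_i\in\fa_R(T_i)+x_dR$; the chosen lift need not lie in $\fa_R(T_i)$. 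The issue is repairable --- since $\fa_S(T_i)=\pi(\fa_R(T_i))$, one can pick a lift $x_i'\in\fa_R(T_i)$ of $\bar x_i$, and because any two lifts differ by a multiple of $x_d$ the quotient modules $T_i$ are unaffected --- but as written your argument asserts the transcription is ``direct,'' which it is not. This is precisely the phenomenon the paper's proof sidesteps by raising the sequence to a high power ($r\geq d!$): powers of elements of $\fa(R)$ do land in the iterated products $\fa(R/(\cdots))$ via Lemma \ref{lem3.1}, making the direct construction go through without the preimage-of-a-product subtlety.
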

\begin{proof}
Since $\dim R/\fa(R)\leq 1$, one can choose $x_2, \ldots, x_n\in \fa(R)$ which is a part of a
system of parameters of $R$ and choose $x_1\in \Ann H^0_\fm(R/(x_2, \ldots,
x_n))$ which is a parameter element of $R/(x_2, \ldots, x_n)$. Then $x_1^r,  \ldots,
x_n^r$ is a p-standard system of parameters of $R$ for $r\geq n!$ by Lemma \ref{lem3.1}. The corollary then follows from Theorem \ref{main11}.
\end{proof}

We say that a Noetherian local ring $R$ has a small Cohen-Macaulay module if there is a finitely generated Cohen-Macaulay $R$-module $M$ such that $\dim M=\dim R$. Note that all rings of dimension $1$ have a small Cohen-Macaulay module, namely, $R/H^0_\fm(R)$. However, as being indicated in the following corollary, it is not the case of rings of higher dimension (see also Grothendieck \cite[Proposition 6.3.8]{gr}).
\begin{corollary}
Let $R$ be a Noetherian local ring. If $R$ has a small Cohen-Macaulay module $M$ such that $\Supp(M)=\Spec(R)$ then $R$ is a quotient of a Cohen-Macaulay local ring. In particular, for any $d\geq 2$ there is a Noetherian local domain of dimension $d$ which does not have any small Cohen-Macaulay module.
\end{corollary}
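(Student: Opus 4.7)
The plan is to reduce the first assertion directly to Theorems \ref{main1} and \ref{main11}. Since $M$ is Cohen-Macaulay with $\dim M = \dim R =: d$, we have $H^i_\fm(M) = 0$ for $i < d$, so $\fa(M) = R$, and therefore any system of parameters of $M$ is trivially p-standard. The hypothesis $\Supp(M) = \Spec(R)$ gives $\Supp(R) \subseteq \Supp(M)$, so the implication $(a) \Rightarrow (d)$ of Theorem \ref{main1} shows that $R$ itself admits a p-standard system of parameters, and Theorem \ref{main11} concludes that $R$ is a quotient of a Cohen-Macaulay local ring.

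For the second assertion, I would first observe that in the domain case the support hypothesis is automatic: if $R$ is a Noetherian local domain of dimension $d$ and $M$ is a small Cohen-Macaulay module on $R$, then some $\fp \in \Supp(M)$ satisfies $\dim R/\fp = d$, and in a local domain the only such prime is $(0)$. Hence $(0) \in \Supp(M) = V(\Ann_R M)$, forcing $\Ann_R M = 0$ and thus $\Supp(M) = \Spec(R)$. Combined with the first assertion, this shows that every Noetherian local domain possessing a small Cohen-Macaulay module is a quotient of a Cohen-Macaulay local ring, and in particular universally catenary by Theorem \ref{kawasakiCM}.

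To produce the desired examples for each $d \geq 2$, I would start from the two-dimensional Noetherian local domain $R_0$ of Ferrand-Raynaud already cited in the paper, which fails to be universally catenary, and form $R := R_0[[T_1, \ldots, T_{d-2}]]$. Standard facts show that $R$ is a Noetherian local domain of dimension $d$, and the quotient $R/(T_1, \ldots, T_{d-2}) \simeq R_0$ is not universally catenary; since universal catenaricity passes to quotients, $R$ itself cannot be universally catenary, so by the previous paragraph $R$ admits no small Cohen-Macaulay module.

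The substantive content is packed into Theorems \ref{main1}, \ref{main11}, and \ref{kawasakiCM}, together with the existence of Ferrand-Raynaud's example. The only step that requires even a moment's thought is verifying that the power-series extension preserves the failure of universal catenaricity, but this is immediate from the fact that $R_0$ is recovered as a quotient of $R$ and that universal catenaricity descends to quotients; hence no real obstacle arises.
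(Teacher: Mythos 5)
Your proof is correct and follows essentially the same route as the paper: reduce via Theorems \ref{main1} and \ref{main11} to the p-standard question, then kill the power-series variables in $R_0[[T_1,\ldots,T_{d-2}]]$ to recover the Ferrand--Raynaud domain. The only (cosmetic) divergence is in the middle of the second part: you pass from $R_0$ not universally catenary to $R$ not universally catenary using the standard fact that universal catenaricity is inherited by quotients and then invoke Theorem \ref{kawasakiCM}, whereas the paper applies Theorem \ref{main1} to the quotient to deduce directly that $R$ admits no p-standard system of parameters; both arguments are equivalent, and you also helpfully make explicit the observation (implicit in the paper) that for a local domain the support condition $\Supp(M)=\Spec(R)$ is automatic for a small Cohen--Macaulay module.
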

\begin{proof}
Since any regular system of parameters is also a p-standard system of parameters, Theorem \ref{main11} implies that $R$ is a quotient of a Cohen-Macaulay local ring. 

Let $A$ be the two-dimensional local domain constructed by Ferrand and Raynaud  in \cite[Proposition 3.3]{fr}. It was proved in that paper that $\hat A$ has an embedding associated prime, hence $A$ is not universally catenary and it does not admit any p-standard system of parameters. For each $n\geq 0$, let $R_n=A[[X_1, \ldots, X_n]]$ be the ring of formal power series with coefficients in $A$. Then $\dim R_n=n+2$. Note that $A$ is a quotient of $R_n$. By Theorem \ref{main1}, $R_n$ admits no p-standard systems of parameters, hence, small Cohen-Macaulay modules over $R_n$ do not exist.
\end{proof}

Assume that $(R, \fm)$ is a Noetherian local domain which has a small Cohen-Macaulay module $M$. Then there is a short exact sequence
$$0\longrightarrow R \longrightarrow M \longrightarrow M^\prime \longrightarrow 0,$$
which derives a long exact sequence of local cohomology modules
$$H^0_\fm(M)\longrightarrow H^0_\fm(M^\prime)\longrightarrow H^1_\fm(R)\longrightarrow H^1_\fm(M)\longrightarrow H^1_\fm(M^\prime).$$
Since $H^0_\fm(M)=0$ and $H^1_\fm(M)=0$, $H^1_\fm(R)\simeq H^0_\fm(M^\prime)$ is of finite length. The next corollary says that a similar property also holds for quotient domains of Cohen-Macaulay local rings. It should be mentioned here that this corollary was proved by P. Schenzel \cite{sch} for local rings admitting a dualizing complex.
\begin{corollary}\label{assprime}
Let $(R, \fm)$ be a quotient of a Cohen-Macaulay local ring and $M$ be a finitely generated $R$-module. Then $\dim R/\fa_i(M)\leq i$ for all $i$ and the equality holds if and only if there is an associated prime $\fp\in \Ass(M)$ such that $\dim R/\fp=i$. 
\end{corollary}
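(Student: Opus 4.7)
The plan is to bootstrap from Theorem \ref{main11} and Theorem \ref{main1} so that $M$ itself admits a p-standard system of parameters, and then read everything off Proposition \ref{pro15}. Indeed, since $R$ is a quotient of a Cohen-Macaulay local ring, Theorem \ref{main11} says $R$ has a p-standard system of parameters; then Theorem \ref{main1}(d) (applied to $\Supp(M) \subseteq \Spec(R) = \Supp(R)$) gives a p-standard system of parameters on $M$. This puts us in the setting where Proposition \ref{pro15} is available, and the identity
$$V(\fa_0(M)\ldots \fa_i(M)) = \{\fp\in\Supp(M):\depth M_\fp+\dim R/\fp\leq i\}$$
will do all the work.

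For the inequality $\dim R/\fa_i(M)\leq i$, the key observation is the trivial containment $\fa_0(M)\fa_1(M)\cdots \fa_i(M)\subseteq \fa_i(M)$, which gives $V(\fa_i(M))\subseteq V(\fa_0(M)\cdots\fa_i(M))$. So every prime $\fp\supseteq \fa_i(M)$ satisfies $\depth M_\fp+\dim R/\fp\leq i$ by Proposition \ref{pro15}, and in particular $\dim R/\fp\leq i$.

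For the equivalence at equality, I would handle the two directions separately, both using Proposition \ref{pro15}. Suppose $\fp\in \Ass(M)$ with $\dim R/\fp = i$; then $\depth M_\fp = 0$, hence $\depth M_\fp+\dim R/\fp = i$, so Proposition \ref{pro15} places $\fp$ in $V(\fa_0(M)\cdots\fa_i(M))\setminus V(\fa_0(M)\cdots\fa_{i-1}(M))$. Since $\fp$ is prime and contains the product $\bigl(\fa_0(M)\cdots\fa_{i-1}(M)\bigr)\cdot\fa_i(M)$ while not containing the first factor, it must contain $\fa_i(M)$. Therefore $\dim R/\fa_i(M)\geq \dim R/\fp = i$, and combined with the first step we get equality. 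Conversely, if $\dim R/\fa_i(M)=i$, pick a minimal prime $\fp$ over $\fa_i(M)$ with $\dim R/\fp=i$; the inclusion $V(\fa_i(M))\subseteq V(\fa_0(M)\cdots\fa_i(M))$ gives $\depth M_\fp+\dim R/\fp\leq i$, forcing $\depth M_\fp = 0$, which means $\fp\in \Ass(M)$.

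There is no serious obstacle here: the only delicate point is the bookkeeping trick that a prime containing $\fa_i(M)$ automatically contains the full product $\fa_0(M)\cdots \fa_i(M)$, which is what bridges the gap between the statement (formulated in terms of a single $\fa_i$) and the main tool (Proposition \ref{pro15}, formulated in terms of the product). Once this bridge is in place, both the inequality and the characterization of equality follow immediately from the depth-dimension formula.
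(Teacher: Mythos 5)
Your proof is correct and follows essentially the same route as the paper's: reduce to the case where $M$ admits a p-standard system of parameters via Theorems~\ref{main11} and~\ref{main1}, then read everything off the depth-dimension description of $V(\fa_0(M)\cdots\fa_k(M))$ in Proposition~\ref{pro15}. You are somewhat more explicit than the paper about the bookkeeping -- in particular the observation that a prime containing $\fa_i(M)$ automatically lies in $V(\fa_0(M)\cdots\fa_i(M))$, and conversely that a prime in $V(\fa_0(M)\cdots\fa_i(M))\setminus V(\fa_0(M)\cdots\fa_{i-1}(M))$ must contain the factor $\fa_i(M)$ by primality -- which the paper leaves implicit, but the underlying argument is the same.
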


\begin{proof}
Put $d=\dim M$. It is obvious from the theory of local cohomology  that $\dim R/\fa_d(M)=d$. By Theorems \ref{main1} and \ref{main11},  $M$ admits a p-standard system of parameters. Applying Proposition \ref{pro15} we imply that $\dim R/\fa_i(M)\leq i$ for $i=0, \ldots, d-1$. The equality holds if and only if there is a $\fp\in \Supp(M)$ such that $\dim R/\fp=i$ and $\depth(M_\fp)=0$, that is, $\fp \in \Ass(M)$.
\end{proof}


\section{Macaulayfication of local rings}
\label{section6}

This last section is devoted to discussing on Macaulayfication of local rings. Firstly, combining Theorem \ref{main1} and Theorem \ref{main11} with the works of  Grothendieck \cite{gr}, Faltings \cite{gf1} and Kawasaki \cite{tk1}, we obtain the following theorem which proves the rest of Theorem \ref{main3a}.

\begin{theorem}\label{main12}
Let $R$ be a Noetherian local ring. The following statements are equivalent:
\begin{enumerate}[(a)]
\item $R$ is a quotient of a Cohen-Macaulay local ring.

\item There is a finitely generated $R$-module $M$ with $\Supp(M)=\Spec(R)$ which admits a p-standard system of parameters.

\item $R$ is universally catenary and for each quotient $S$ of $R$,  $\Spec(S)$ has a Macaulayfication.

\item All essentially of finite type $R$-algebras verify Faltings' Annihilator Theorem. That means, if we let $A$ be an  essentially of finite type $R$-algebra and $N$ be a finitely generated $A$-module, then for any pair of ideals $\fb\subseteq \fa$ of $A$, we have
$$\inf \{i: \fb\not\subseteq \sqrt{\Ann_A H^i_\fa(N)}\}=\inf\{\depth N_\fp+\htt(\fa+\fp)/\fp: \fp\notin V(\fb)\}.$$

\item Any of the above statements holds for all essentially of finite type $R$-algebras.
\end{enumerate}
\end{theorem}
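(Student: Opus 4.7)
The plan is to route every equivalence through (a), using Theorems \ref{main1} and \ref{main11} as the engine and invoking classical results of Faltings, Kawasaki and Grothendieck as needed. Several implications are immediate. For (a) $\Leftrightarrow$ (b), take $M=R$ for one direction and, for the converse, apply Theorem \ref{main1}(d) $\Rightarrow$ (a) to $N=R$ (whose support lies in $\Supp(M)=\Spec(R)$) followed by Theorem \ref{main11}. For (a) $\Leftrightarrow$ (e), only (a) $\Rightarrow$ (e) needs an argument: any essentially of finite type algebra over a quotient of a CM local ring $S$ is a quotient of a localization of a polynomial ring over $S$, which is itself CM local, so the class in (a) is stable under essentially of finite type extensions, and the remaining conditions propagate by running the equivalences over each such algebra. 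For (a) $\Rightarrow$ (c), every quotient $S$ of $R$ is again a quotient of a CM local ring, so admits a p-standard system of parameters by Theorem \ref{main11}; Kawasaki's blow-up construction in \cite{tk1}, whose center is a product of parts of such a sop, then yields a Macaulayfication of $\Spec(S)$, and universal catenaricity is automatic for quotients of CM local rings. Finally, (a) $\Rightarrow$ (d) is the classical Faltings Annihilator Theorem: it holds for essentially of finite type algebras over local rings with CM formal fibers, and (a) delivers this hypothesis via Theorem \ref{kawasakiCM} together with the stability just used for (e).

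The main obstacle I anticipate is (c) $\Rightarrow$ (a). The plan is to produce, for every quotient domain $S$ of $R$, a uniform local cohomology annihilator, namely a nonzero element of $\fa(S)$. Once this is secured, Theorem \ref{main1}(b) $\Rightarrow$ (a) applied with $M=R$ yields a p-standard system of parameters on $R$, and Theorem \ref{main11} closes the loop. To extract the annihilator from a Macaulayfication $\pi:Y \to \Spec(S)$, the key observation is that $\pi_*\cO_Y$ is a coherent, finite, birational $S$-algebra inside $\Frac(S)$ (since $\pi$ is proper and birational over an integral base), with strong depth properties inherited from the CM hypothesis on $Y$. The long exact sequence of local cohomology associated to $0 \to S \to \pi_*\cO_Y \to C \to 0$, combined with the fact that the conductor ideal $(S:_S \pi_*\cO_Y)$ is nonzero (because $\pi$ is an isomorphism over a dense open subset), then forces every element of the conductor to annihilate $H^i_\fm(S)$ for $i < \dim S$. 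This is essentially Faltings' original observation in \cite{gf1}, and universal catenaricity is what ensures the depth of $\pi_*\cO_Y$ is uniformly controlled so that the argument runs for every quotient domain simultaneously.

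For (d) $\Rightarrow$ (a), two strategies present themselves. The direct approach relies on the converse of Faltings' Annihilator Theorem in the tradition of Brodmann and Sharp, which extracts from the full strength of the formula (applied across all essentially of finite type $R$-algebras) that the formal fibers of $R$ are CM while $R$ remains universally catenary; Theorem \ref{kawasakiCM} then delivers (a). A more self-contained route reads off, from Faltings' formula with $A=R$, $N=R$, $\fa=\fm$ and $\fb$ ranging over primes, the bound $\dim R/\fa_i(R) \leq i$ for each $i$ (compare Corollary \ref{assprime}) together with the analogous bound for every quotient domain of $R$; this furnishes a uniform local cohomology annihilator in each such quotient, and Theorem \ref{main1}(b) $\Rightarrow$ (a) followed by Theorem \ref{main11} completes the cycle.
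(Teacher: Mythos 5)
The peripheral equivalences are handled essentially as in the paper. Your $(a)\Leftrightarrow(b)$ and $(a)\Leftrightarrow(e)$ match the paper's argument, and for $(a)\Leftrightarrow(d)$ the paper simply cites Faltings' Satz~4 in \cite{gf1}, which is consistent with your outline (your two alternative routes for $(d)\Rightarrow(a)$ are plausible but not needed). Your $(a)\Rightarrow(c)$ omits a real step: Kawasaki's Corollary~4.2 of \cite{tk1} applies to equidimensional local rings, so the paper first decomposes an arbitrary quotient $S=R/\cap_i\fq_i$ into equidimensional pieces $S_j=R/\cap_{\dim R/\fq_i=j}\fq_i$, shows $\bigsqcup_j\Spec(S_j)\to\Spec(S)$ is a finite birational morphism inducing an isomorphism on Cohen--Macaulay loci, and only then applies Kawasaki to each $S_j$. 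A p-standard system of parameters of a non-equidimensional $S$ consists of elements lying in $\fa(S)$, hence in every minimal prime over a low-dimensional component (Lemma~\ref{am}), so the naive blow-up does not directly Macaulayfy $\Spec(S)$; you need the equidimensional reduction.

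The genuine gap is in $(c)\Rightarrow(a)$. You propose extracting a uniform local cohomology annihilator of a quotient domain $S$ from the conductor $\fc=(S:_S\pi_*\cO_Y)$ of the birational extension $B:=\pi_*\cO_Y=\Gamma(Y,\cO_Y)$. For this to annihilate $H^i_\fm(S)$ via the long exact sequence attached to $0\to S\to B\to C\to 0$, you would need $H^i_\fm(B)=0$ for $i<\dim S$; but $B$, being only the ring of global sections of $Y$, need not be Cohen--Macaulay. Worse, the conductor can be useless: if $S$ satisfies Serre's $(S_2)$ (so $S$ is reflexive and $S=\bigcap_{\htt\fp=1}S_\fp$) and the non-CM locus has codimension $\geq 2$, then $B_\fp=S_\fp$ for all height-one primes forces $B=S$ and $\fc=S$, yet $H^i_\fm(S)\neq 0$ for some $2\le i<\dim S$. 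So the conductor argument collapses exactly in the cases of interest. Also, your appeal to universal catenaricity ``controlling $\depth\pi_*\cO_Y$'' is not a real mechanism. The paper instead argues via formal fibers: it proves the non-CM locus is closed for finite-type $\hat R$-schemes (using Grothendieck's closedness of the singular locus and Nagata's criterion), shows the base change $X'=X\times_R\hat R$ of a Macaulayfication $X\to\Spec R$ remains a Macaulayfication of $\Spec\hat R$ by a formal-completion comparison $\widehat{\cO_{X',x'}}\simeq\hat B$, deduces that the generic formal fibers are Cohen--Macaulay, and then invokes Theorem~\ref{kawasakiCM} together with the universal catenaricity hypothesis. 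That route is what you should follow; the conductor route as stated does not close.
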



\begin{proof}
\noindent $(a) \Leftrightarrow (b)$. Let $M$ be a finitely generated $R$-module such that $\Supp(M)=\Spec(R)$. Theorem \ref{main1}$((a)\Leftrightarrow(d))$, first apply to $M$ and then apply to $R$, shows that $M$ admits a p-standard system of parameters if and only if so does $R$. By Theorem \ref{main11}, $R$ admits a p-standard system of parameters if and only if $R$ is a quotient of a Cohen-Macaulay local ring. This proves $(a)\Leftrightarrow (b)$.

\medskip
\noindent $(a) \Rightarrow (c)$. Assume $R$ is a quotient of a Cohen-Macaulay local ring. Then it is universally catenary. 
 Let $S$ be a quotient of $R$. We write $S=R/\cap_{i=1}^r\fq_i$ where $\fq_1, \ldots, \fq_r$ are primary ideals of $R$. For each $j=0, 1, \ldots, \dim S$, put
$$S_j=R/\cap_{\dim R/\fq_i=j} \fq_i.$$
So each $S_j$ is either zero or equidimensional of dimension $j$. We have a natural morphism
$$f: \bigsqcup_{j=0}^{\dim S}\Spec(S_j) \rightarrow \Spec(S),$$ where $\sqcup$ denotes the disjoint union.
This morphism is birational and proper. Indeed, since $f$ is finite, it is proper. We denote by $\CM(S)$ the Cohen-Macaulay locus of $S$, that is, those prime ideal $\fp$ such that $S_\fp$ is Cohen-Macaulay. If $\fp\in \CM(S)$ then $S_\fp$ is equidimensional. Combining this with the assumption on catenaricity of $R$, we get that $\fp\in \CM(S_j)$ for some $j$ and $\fp\not\in \CM(S_i)$ if $i\not=j$. So $f$ induces an isomorphism
$$\bigsqcup_{j=0}^{\dim S}\CM(S_j) \stackrel{\simeq}{\rightarrow} \CM(S).$$ It in particular shows that $f$ is birational.  If $S_j$ has a Macaulayfication  $X_j\rightarrow \Spec(S_j)$  for all $j$. Then the composition morphism 
$$\bigsqcup_{j=0}^{\dim S}X_j \rightarrow \bigsqcup_{j=0}^{\dim S}\Spec(S_j) \stackrel{\simeq}{\rightarrow} \Spec(S),$$
is a Macaulayfication of $\Spec(S)$. Therefore it remains to prove that for any equidimentional quotient $S$ of $R$, $\Spec(S)$ has a Macaulayfication.

Let $S$ be an equidimensional quotient of $R$. By Theorem \ref{main11}, $S$ admits a p-standard system of parameters. Corollary 4.2 of \cite{tk1} concludes that $\Spec(S)$ has a Macaulayfication. Here it is worth noting that in \cite[Corollary 4.2]{tk1}, the ring is assumed to possess a dualizing complex, but this is used only to guarantee that the ring has a p-standard system of parameters. So in our situation, we do not need this assumption.


\medskip
\noindent $(c) \Rightarrow (a)$. By Theorem \ref{kawasakiCM} it suffices to show that the formal fibers of $R$ are Cohen-Macaulay.  Moreover, replace $R$ by  $R/\frak p$ for any prime ideal $\fp\in \Spec R$, we  need only to show that the generic  formal fibers of $R$   are Cohen-Macaulay.

Firstly note that  if $X$ is an $\hat R$-scheme of finite type then the non-Cohen-Macaulay locus 
$$nCM(X)=\{x\in X: \mathcal O_{X, x} \text{ is not Cohen-Macaulay} \}$$
 is a closed subset of  $X$. This fact should be known by experts, but for the sake of completeness we give a proof for this fact. It suffices to prove this for an affine scheme, hence we assume $X=\Spec A$ for a finitely generated $\hat R$-algebra $A$. For each $\fp\in\Spec A$, we know by Grothendieck \cite[Corollare 6.12.8]{gr} that the singular locus $\Sing(A/\fp)$ is closed. Then $\nCM(A/\fp)$ is a subset of a closed subset of $\Spec A/\fp$ and by Nagata criterion (see Matsumura \cite[Theorem 24.5]{ma}), $\nCM(X)$ is a closed subset of $X$. 


Next, let $\pi: X\rightarrow \Spec R$ be a Macaulayfication. Denote $X^\prime=X\times_{\Spec R}\Spec\hat R$. We have a commutative diagram
\[\begin{CD}
X^\prime  @>\pi^\prime>> \Spec \hat R\\
@V\varphi^\prime VV  @V\varphi VV\\
X @>\pi>> \Spec R.
\end{CD}\]
We need to show that $X^\prime$ is a Macaulayfication of $\Spec \hat R$. Indeed, since $\pi$ is birational and of finite type, there is a an open dense subset $U\subseteq \Spec R$ such that the restriction $\pi^{-1}(U)\rightarrow U$ is an isomorphism and $\pi^{-1}(U)$ is dense in $X$. So $(\varphi^\prime)^{-1}(\pi^{-1}(U))\rightarrow \varphi^{-1}(U)$ is an isomorphism.
 Since $\pi^\prime$ is of finite type, $X^\prime$ is Noetherian. Moreover, the subset $\CM(X^\prime):=X^\prime\setminus \nCM(X^\prime)$ is open and dense in $X^\prime$ as we have proved. Let $\frak m$ and $\hat {\frak m}$ be the maximal ideals of $R$ and $\hat R$ respectively. Take a closed point $x^\prime\in (\pi^\prime)^{-1}(\hat {\frak m})$ of $X^\prime$. Put $x=\varphi^\prime(x^\prime)$ and $B=\cO_{X, x}$ with the maximal ideal $\fm_x$. Let $B^\prime=B\otimes_R\hat R$. Since $k_R(\fm) \simeq k_{\hat R}(\hat \fm)$, $(\pi^\prime)^{-1}(\hat {\frak m})\simeq \pi^{-1}(\frak m)$ where the map is induced from $\varphi^\prime$. Then there is a unique maximal ideal $\fn$ of $B^\prime$ over $\fm_x$, $\hat\fm$ such that $\cO_{X^\prime, x^\prime}\simeq B^\prime_\fn$. Lemma 7.9.3.1 of \cite{gr} gives us $\hat B\simeq \widehat{B^\prime_\fn}\simeq \widehat{\cO_{X^\prime, x^\prime}}$. So $\cO_{X^\prime, x^\prime}$ is Cohen-Macaulay since $B$ is Cohen-Macaulay. What we have proved is that $(\pi^\prime)^{-1}(\hat {\frak m})\subseteq \CM(X^\prime)$. Thus there is an open neighborhood $V\subseteq \Spec\hat R$ of $\hat \fm$  such that $(\pi^\prime)^{-1}(V)\subseteq \CM(X^\prime)$. But $\hat \fm$ is the unique closed point of $\Spec\hat R$, hence $V=\Spec\hat R$. Therefore, $X^\prime=\pi^{-1}(V^\prime)\subseteq \CM(X^\prime)$ is Cohen-Macaulay.

Now,  let $\fp$ be a minimal prime ideal of $R$ and let $P\in \Spec \hat R$ be a closed point in the fiber of $\varphi$ at $\fp$, that is, $P\cap R=\fp$. Since $\varphi^{-1}(U)$ is open and dense in $\Spec(\hat R)$, $P\in \varphi^{-1}(U)$. Hence, $(k_R(\fp)\otimes_R\hat R)_P\simeq \hat R_P$ is Cohen-Macaulay, where $k_R(\fp):=R_\fp/\fp R_\fp$. So that the fiber of $\varphi$ at $\fp$ is Cohen-Macaulay.

\medskip
\noindent $(a) \Leftrightarrow (d)$. This equivalence was proved by Faltings in \cite[Satz 4]{gf1}.

\medskip
\noindent $(a)\Leftrightarrow (e)$. The conclusion is clear since any polynomial ring of finitely many indeterminates over a Cohen-Macaulay ring is also Cohen-Macaulay.
\end{proof}

So if a Noetherian local ring is a quotient of a Cohen-Macaulay local ring, the spectrum of any quotient of the ring has a Macaulayfication. It is interesting to know whether the converse is true or not. The second part of this section is devoted to a discussion on this question.

We have seen in the proof of the equivalence $(a)\Leftrightarrow (c)$ in Theorem \ref{main12} that if the spectrum of any quotient of a Noetherian local ring has a Macaulayfication then all formal fibers of the ring are Cohen-Macaulay. Wishing to find a positive answer to the question above, by Theorem \ref{kawasakiCM} we must therefore prove that in addition, the ring is universally catenary. In the sequence, we will show that this is reduced to proving the catenaricity of the ring which is formulated in the following conjecture.

\begin{conjecture}\label{conjecture}
If the spectrum of a Noetherian ring has a Cohen-Macaulay blowing up, then the ring itself is catenary.
\end{conjecture}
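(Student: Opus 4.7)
The plan is to reduce to the case where $R$ is a Noetherian local domain admitting a Cohen-Macaulay blow-up $\pi\colon Y=\Proj\cR(I)\to\Spec R$ for some ideal $I$ of positive height, and then to establish the dimension formula $\htt\fp+\dim R/\fp=\dim R$ for every $\fp\in\Spec R$, which is the classical reformulation of catenaricity for a local domain. Catenaricity being local and descending well through quotients by minimal primes (together with the fact that localizing and taking strict transforms preserves the Cohen-Macaulay blow-up hypothesis), this reduction is essentially formal.

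Next I would exploit the standard affine cover of $Y$ by the charts $U_f=\Spec R[I/f]$, with $f$ running over non-zerodivisor generators of $I$. By hypothesis each $R[I/f]$ is Cohen-Macaulay of Krull dimension $\dim R$, hence equidimensional and catenary, and each inclusion $R\hookrightarrow R[I/f]$ is birational. For a prime $\fp\in\Spec R$ with $\fp\not\supseteq I$ the map $\pi$ is an isomorphism above $\fp$, so the unique prime $\fq$ of $R[I/f]$ over $\fp$ satisfies $R_\fp\cong R[I/f]_\fq$, and the desired formula transfers at once from the catenary ring $R[I/f]$.

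The substantive case is $\fp\in V(I)$. The fibre $\pi^{-1}(\fp)$ is a non-empty proper $k(\fp)$-scheme, and so admits a closed point $\fq$ in some chart $U_f$ with $k(\fq)/k(\fp)$ finite. Standard dimension theory for morphisms of finite type then gives $\dim R[I/f]/\fq=\dim R/\fp$; combined with the catenarity identity $\htt\fq+\dim R[I/f]/\fq=\dim R$ in $R[I/f]$, this reduces the problem to the single equality $\htt\fq=\htt\fp$.

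The principal obstacle is precisely this last equality. The local homomorphism $R_\fp\to R[I/f]_\fq$ is birational and dominant but almost never flat at points above $V(I)$, and there is no general reason for $\dim\cO_{Y,\fq}$ to coincide with $\htt\fp$ in advance of knowing catenaricity of $R$ at $\fp$; this is the very pathology the Ferrand--Raynaud example exhibits. Two attacks suggest themselves. The first is a Noetherian induction on $V(I)$: replace $R$ by $R/\fp'$ for a suitably chosen prime $\fp'\subseteq\fp$, base-change the blow-up along $\Spec R/\fp'\to\Spec R$, and apply an inductive hypothesis in strictly smaller dimension. The second is more structural: apply Grothendieck's finiteness theorem to see that $\pi_*\cO_Y$ is a coherent, birational $R$-algebra, then combine Zariski's Main Theorem with the Theorem on Formal Functions to lift chains from $R$ through chains in $Y$. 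In either approach the difficulty concentrates in the non-normal locus of $R$, and it is exactly this feature that appears to keep Conjecture~\ref{conjecture} genuinely open.
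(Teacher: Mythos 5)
The statement you were assigned is explicitly labeled a \emph{conjecture} in the paper, and the paper provides no proof of it. The only result that touches it, Theorem~\ref{catenary}, is conditional: it begins ``Assuming the answer to Conjecture~\ref{conjecture} is positive.'' Your proposal, to its credit, does not claim to close the argument; the final paragraph honestly isolates the obstruction — establishing $\htt\fq=\htt\fp$ for a point $\fq$ of the blow-up over $\fp\in V(I)$ without circularly invoking the dimension formula one is trying to prove — and concludes that the conjecture appears genuinely open. That assessment agrees with the paper's own stance, and your identification of the Ferrand--Raynaud example as the pathology one must rule out is on target.

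One caution on the reduction steps you describe as ``essentially formal.'' Passing from a local ring to its quotients by minimal primes is correct on the catenaricity side (a Noetherian local ring is catenary iff $R/\fp$ is catenary for every minimal prime $\fp$), but it is not clear that the Cohen--Macaulay blow-up hypothesis descends: the strict transform of $\Spec R/\fp$ inside $\Proj\cR(R,I)$ is $\Proj\cR(R/\fp,\,I(R/\fp))$, and a closed subscheme of a Cohen--Macaulay scheme need not be Cohen--Macaulay. The paper itself only justifies the reduction from a general Noetherian ring to a local one (by localizing at maximal ideals), and stops there. So even the preliminary reduction to local domains would require an argument that is not presently available; this does not change your conclusion, but it means the gap starts earlier than the case $\fp\in V(I)$.
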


The conjecture is stated for Noetherian rings which are not necessarily local. However it is enough to prove the conjecture for local rings. Indeed, if $\Proj \cR(R, I) \rightarrow \Spec(R)$ is a Macaulayfication for a Noetherian ring $R$ and $\fm$ is a maximal ideal of $R$, then the restriction morphism $\Proj \cR(R_\fm, IR_\fm) \rightarrow \Spec(R_\fm)$ is a Macaulayfication of $\Spec(R_\fm)$. Assuming the conjecture holds true for local rings, then $R_\fm$ is catenary. Hence $R$ is catenary. 

In order to reduce the original question to the case considered in Conjecture \ref{conjecture}, we need a result on Macaulayfication of polynomial rings.

\begin{proposition}\label{macaulayficationpolynomial}
Let $R$ be a Noetherian (not necessarily local) ring and $S:=R[T]$ be the polynomial ring in one indeterminate over $R$. Suppose there is an ideal $I\subset R$ such that the canonical morphism $\Proj \cR(R, I)\rightarrow \Spec(R)$ is a Macaulayfication of $\Spec(R)$. Then $\Proj \cR(S, IS)\rightarrow \Spec(S)$ is a Macaulayfication of the spectrum of the polynomial ring.
\end{proposition}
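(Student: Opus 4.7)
The plan is to realize $\Proj \cR(S, IS) \to \Spec S$ as the flat base change of $\Proj \cR(R, I) \to \Spec R$ along the structure morphism $p : \Spec R[T] \to \Spec R$, and then check that each defining property of a Macaulayfication (properness, birationality, Cohen-Macaulayness of the source) is preserved by this base change.

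First I would record the graded $S$-algebra identification
\[\cR(S, IS) \;=\; \bigoplus_{n\geq 0} I^n S \;=\; \cR(R, I)\otimes_R S,\]
which is immediate from flatness of $R \to S$. Taking $\Proj$ yields $Y' := \Proj \cR(S, IS) \cong Y\times_{\Spec R} \Spec S$ with $Y := \Proj \cR(R, I)$, and the structure morphism $\pi' : Y' \to \Spec S$ is precisely the base change of $\pi : Y \to \Spec R$ along $p$. Equivalently, blowing up commutes with the flat base change $R \to R[T]$.

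Next I would verify the three required properties for $\pi'$. Properness is automatic, since $\pi$ is proper and properness is stable under arbitrary base change. For birationality, fix a dense open $U\subseteq \Spec R$ on which $\pi$ restricts to an isomorphism; base change gives an isomorphism $(\pi')^{-1}(p^{-1}(U))\to p^{-1}(U)$, so what remains is to show $p^{-1}(U) = U\times_R \Spec S$ is dense in $\Spec S$. This follows from the standard fact that every minimal prime of $R[T]$ has the form $\fp R[T]$ for a minimal prime $\fp$ of $R$, and each such $\fp$ lies in $U$. For Cohen-Macaulayness, I would cover $Y$ by open affines $\Spec A$; then $Y'$ is covered by $\Spec(A\otimes_R S) = \Spec A[T]$, and the classical theorem that $A$ Cohen-Macaulay implies $A[T]$ Cohen-Macaulay (hence so are all its localizations) finishes the argument.

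I do not expect any substantial obstacle here: the extension $R \to R[T]$ is flat with regular fibers, so birationality, properness and Cohen-Macaulayness all transfer in the expected way, and the only datum to check is the graded identification of Rees algebras in the first step. The proposition is thus a template that will let us pass freely between $R$ and $R[T]$ when studying Macaulayfications, precisely the flexibility needed in the reduction to Conjecture \ref{conjecture} alluded to just before the statement.
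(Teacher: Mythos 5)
Your proof is correct and takes essentially the same approach as the paper: both rest on the identification $\cR(S,IS)=\cR(R,I)\otimes_R S=\cR(R,I)[T]$ and the fact that Cohen--Macaulayness is preserved under adjoining a polynomial variable (equivalently, under the flat base change along $R\to R[T]$). The paper's proof is terser and only spells out the Cohen--Macaulayness of the Proj (working with homogeneous primes $P$ and $\fp=P\cap\cR(R,I)$ rather than phrasing things as a fiber product of schemes), while you also make explicit the routine checks of properness and birationality; this is a presentational difference, not a different route.
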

\begin{proof}
Let $P\in \Proj \cR(S, IS)$, and set $\frak p = P\cap \cR(R,I)$. Then we can check that  $\frak p \in \Proj \cR(R,I)$. In particular, $\cR(R, I)_\fp$ is Cohen-Macaulay from the assumption. Rewriting the graded ring $\cR(S, IS)$ by another way we have $\cR(S, IS) = \cR(R,I)[T]$. It implies $ \cR(S,IS)_P = (\cR(R, I)_\fp[T])_P$ which is Cohen-Macaulay. Therefore\linebreak $\Proj \cR(S, IS)$ is a Cohen-Macaulay scheme.
\end{proof}

This polynomial-base change theorem for Macaulayfication applies and we get almost directly the following theorem.

\begin{theorem}\label{catenary}
Assuming the answer to Conjecture \ref{conjecture} is positive. If the spectrum of a Noetherian ring has a Cohen-Macaulay blowing up, then the ring is universally catenary.
\end{theorem}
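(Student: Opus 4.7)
The plan is to reduce universal catenaricity to the conjectural catenaricity of $R[T_1,\ldots,T_n]$ for all $n$, using Proposition \ref{macaulayficationpolynomial} as the engine for transporting the Macaulayfication to polynomial rings. Recall that $R$ is universally catenary precisely when every finitely generated $R$-algebra is catenary, and since any such algebra is a quotient of some polynomial ring $R[T_1,\ldots,T_n]$ and catenaricity passes to quotients (saturated chains in $R/\fa$ correspond to saturated chains in $R$ above $\fa$), it suffices to prove that $R[T_1,\ldots,T_n]$ is catenary for all $n\ge 0$.

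By hypothesis there exists an ideal $I\subset R$ such that $\Proj\cR(R,I)\to\Spec R$ is a Macaulayfication. First I would apply Proposition \ref{macaulayficationpolynomial} to the one-variable polynomial ring $R[T_1]$, concluding that $\Proj\cR(R[T_1],IR[T_1])\to\Spec R[T_1]$ is a Macaulayfication. Iterating this, with each successive step adding one indeterminate, gives inductively that
$$\Proj\cR(R[T_1,\ldots,T_n],IR[T_1,\ldots,T_n])\longrightarrow \Spec R[T_1,\ldots,T_n]$$
is a Macaulayfication for every $n\ge 0$. Thus the spectrum of every polynomial ring over $R$ admits a Cohen-Macaulay blowing up.

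Now I invoke the standing hypothesis that Conjecture \ref{conjecture} has a positive answer: since $\Spec R[T_1,\ldots,T_n]$ has a Cohen-Macaulay blowing up, the ring $R[T_1,\ldots,T_n]$ is catenary. As explained in the paragraph preceding Conjecture \ref{conjecture}, the conjecture for arbitrary Noetherian rings is equivalent to its local version, so one may apply it directly in this generality. Finally, by the reduction in the first paragraph, every finitely generated $R$-algebra, being a quotient of some $R[T_1,\ldots,T_n]$, is catenary, which is exactly the statement that $R$ is universally catenary.

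There is essentially no obstacle here beyond the Conjecture itself: the whole argument is a routine extension-of-scalars followed by a quotient argument, and the only non-trivial input is Proposition \ref{macaulayficationpolynomial} together with the conjectural implication ``Cohen-Macaulay blowing up $\Rightarrow$ catenary''. The main subtlety worth flagging is the reduction of the Conjecture from global to local rings (needed because polynomial rings $R[T_1,\ldots,T_n]$ are typically non-local even when $R$ is), which is handled by localizing the Rees algebra at a maximal ideal; this is already observed in the discussion following the conjecture and requires no further work.
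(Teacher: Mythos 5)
Your proof is correct, but it takes a genuinely different route from the paper's. The paper applies Proposition~\ref{macaulayficationpolynomial} exactly once to get a Macaulayfication of $\Spec R[T]$, concludes via Conjecture~\ref{conjecture} that $R[T]$ is catenary, and then cites Ratliff's theorem (Matsumura, Corollary~1 of Theorem~31.7): a Noetherian ring $R$ is universally catenary if and only if the one-variable polynomial ring $R[T]$ is catenary. You instead iterate Proposition~\ref{macaulayficationpolynomial} to establish Cohen--Macaulay blowing ups of $\Spec R[T_1,\ldots,T_n]$ for every $n$, apply the Conjecture to each, and then descend to arbitrary finitely generated $R$-algebras using the elementary fact that catenaricity passes to quotients. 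Your route avoids invoking Ratliff's theorem, making the argument more self-contained and closer to the definition of universal catenaricity, at the cost of carrying the induction through all $n$; the paper's route is shorter but leans on a nontrivial external result. One minor remark: your parenthetical about reducing the Conjecture from global to local rings is not actually needed in the argument, since the Conjecture as stated already applies to arbitrary Noetherian rings (including $R[T_1,\ldots,T_n]$), and the paper's localization remark concerns what must be \emph{proved}, not what may be \emph{applied}.
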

\begin{proof}
Let $R$ be a Noetherian ring such that $\Spec(R)$ has Cohen-Macaulay blowing up. The spectrum of the polynomial ring $R[T]$ has a Cohen-Macaulay blowing up by Proposition \ref{macaulayficationpolynomial}. Consequently, $R[T]$ is catenary provided Conjecture \ref{conjecture} has a positive answer. Therefore, $R$ is universally catenary by \cite[Corollary 1 of Theorem 31.7, page 255]{ma}.
\end{proof}

The formulation of Conjecture \ref{conjecture} is motivated by the works of Grothendieck on connection between the problem of resolution of singularities and excellent rings. In fact, it was proved in \cite{gr} that if the spectrum of a Noetherian ring has a resolution of singularities then all the formal fibers of the ring are regular. It seems that Grothendieck expected the ring to be also universally catenary, so excellent. Theorem \ref{catenary} could be seen as an attempt to prove this, under weaker assumption of existence of Macaulayfication.

\bigskip
\noindent{\bf Acknowledgments.} The authors thank M. Brodmann and S. Goto for  useful discussions and comments during this work. The second author thanks Vietnam Institute for Advanced Study in Mathematics (VIASM), Vietnam, and Institute for Mathematical Sciences (IMS-NUS), Singapore, for support and hospitality during his visit to these instituitions.


\end{document}